\def\V#1{{\mathbf #1}}
\def\floor#1{{\lfloor #1 \rfloor}}
\def\Bern{\operatorname{Bern}}
\def\Unif{\operatorname{Unif}}
\def\Var{\operatorname{Var}}
\def\P{\mathbb{P}}
\def\E{\mathbb{E}}
\def\B{\operatorname{B}}
\def\Bin{\operatorname{Bin}}
\def\supp{\operatorname{supp}}
\def\supp{\operatorname{supp}}
\def\hs{\operatorname{HS}}
\newtheorem{proposition}{Proposition}
\newtheorem{theorem}{Theorem}
\newtheorem{corollary}{Corollary}
\newtheorem{conjecture}{Conjecture}
\newtheorem{remark}{Remark}
\newtheorem{example}{Example}
\title{The Horton-Strahler number of  butterfly trees}
\author{John Peca-Medlin\thanks{Department of Mathematics, University of California, San Diego, \href{mailto:jpecamedlin@ucsd.edu}{jpecamedlin@ucsd.edu}}}
\date{}
\begin{document}

\maketitle

\begin{abstract}
    The Horton--Strahler (HS) number, a classical measure of branching complexity arising in hydrology and register allocation, is studied for butterfly trees, a recursive family of binary trees generated by block-merging operations. These trees arise as binary search trees of butterfly permutations, which form the $2$-Sylow subgroup of the symmetric group on $N = 2^n$ elements and appear in models of parallel computation and structured Gaussian elimination. For a single merging step applied to two independent Catalan trees with $m$ nodes, we show that $\hs(\mathcal T_1 \oplus \mathcal T_2)/\log_2(2m) \to 1/2$ in probability, so the classical Catalan scaling is preserved under this restricted construction. In the simple butterfly model, where each level is formed from identical copies and encoded by an $n$-bit string $\mathbf{x}$, the HS number admits an exact representation as an additive functional of an explicit $8$-state Markov chain driven by iid bits $x_j \sim \mathrm{Bern}(p)$, and can be computed in $\mathcal O(n)$ time from $\mathbf{x}$. This yields a complete limit theory, including a strong law $\hs(\mathcal T_n^{\B})/n \to \mu_p = pq/(1-pq)$ almost surely and a functional central limit theorem with variance $\sigma_p^2 = pq(1 - 3pq - 2p^2q^2)/(1-pq)^3$. For general butterfly trees, obtained by recursively merging independent subtrees, the increment depends on an expanding edge profile, and the process does not admit a finite-state reduction. We give an $\mathcal O(N)$ algorithm to compute the HS number directly from the $(N-1)$-bit encoding, characterize the zero-HS class, and combine exact enumeration for small $n$ with Monte Carlo simulations up to $n=25$, supporting $\hs(\mathcal T_n^{\B})/n \to \alpha \approx 0.4450$ in probability for uniform butterfly trees, placing the general model strictly between the simple butterfly limit $1/3$ and the Catalan limit $1/2$.
\end{abstract}

\section{Introduction}\label{sec: intro}

The {Horton--Strahler (HS) number} was originally introduced in hydrology to quantify the branching complexity of river networks, used to identify the order of a river ~\cite{horton1945,strahler1952}.  
Within computer science, it is known as the \emph{register function} and corresponds to the minimal number of auxiliary registers needed to evaluate an arithmetic expression (see, e.g.,~\cite{flajolet1979number}), and so is relevant in measuring parallel architecture efficiency. See \cite{esparza2014history, viennot1990trees} for surveys that cover other applications of the HS number, including to such disparate disciplines as linguistics, biological systems, Newton iterations, parity games, and social networks.

Let $\mathcal T$ be a rooted binary tree. The HS number of a node $v$ is defined recursively as
\[
    \hs(v) =
    \begin{cases}
        0, & \text{if $v$ is a leaf,}\\[4pt]
        \max\left(\hs(v_\ell), \hs(v_r)\right) + \mathds{1}_{\hs(v_\ell) = \hs(v_r)}, & \text{otherwise,}
    \end{cases}
\]
where $v_\ell$ and $v_r$ denote the left and right children of $v$ (if they exist).  
The HS number of  $\mathcal{T}$, written $\hs(\mathcal T)$, is the value at its root. Equivalently, $\hs(\mathcal T)$ is the largest integer $k$ such that $\mathcal T$ contains an embedded perfect binary tree of height $k$ (that is, a full binary tree with $2^k-1$ internal nodes). 

The HS number can be computed in $\mathcal O(m)$ time for a tree with $m$ nodes via a single depth-first traversal, since each node’s value is determined exactly once from its children. (See \cite{ganardi2025complexity} for very recent work on complexity of computing HS number on binary trees using parallel architectures.) Extensions to broader graph classes and refined variants have also been studied~\cite{brandenberger2021horton,addario-berry2024refined}. \Cref{fig: BST HS number keys} displays a binary search tree (BST) with both labels from its insert keys and its HS numbers.

% HS number results for classical models include those for \emph{Catalan trees}: rooted binary trees in which every internal node has exactly two children. A Catalan tree with $m$ internal nodes has $m+1$ leaves and $2m+1$ total vertices. The number of such trees is given by the Catalan number, $C_m = \tfrac1{m+1}\binom{2m}m$; see \cite{Stanley2015CatalanNumbers} for numerous combinatorial interpretations enumerated by Catalan numbers. A uniformly random Catalan tree is  equivalent to a uniformly random full binary tree with $m$ internal nodes. Hence, Catalan trees encapsulate a lot of the behavior for random trees from other classical models, including equiprobable binary trees. 

\begin{figure}
        \centering
        \begin{subfigure}{0.4\textwidth}
    \includegraphics[width=\linewidth]{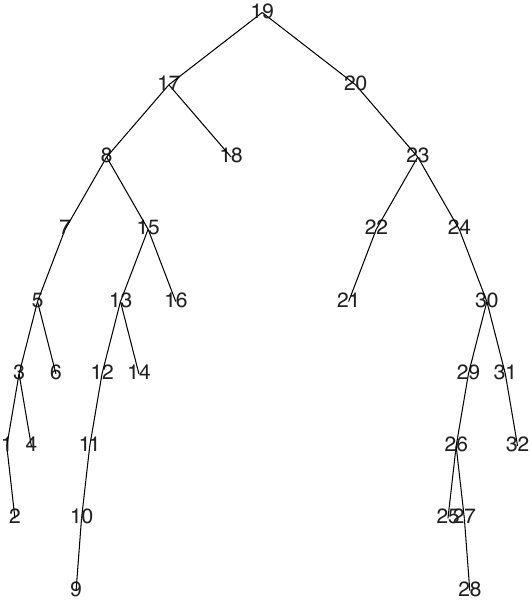}
    \end{subfigure}\qquad
    \begin{subfigure}{0.4\textwidth}
    \includegraphics[width=\linewidth]{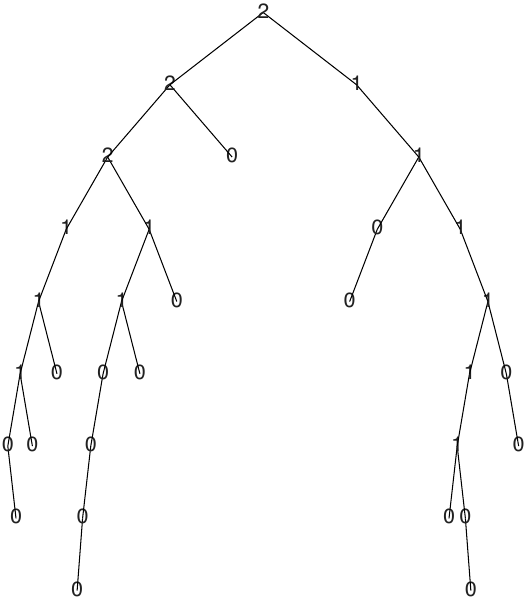}
    \end{subfigure}
        % \resizebox{.45\textwidth}{!}{\begin{tikzpicture}[x=1.5cm, y=1.5cm]
        %     \node[circle, draw] (n5d) at (3,3) {5};
        %     \node[circle, draw] (n4d) at (2,2) {4};
        %     \node[circle, draw] (n7d) at (4,2) {7};
        %     \node[circle, draw] (n2d) at (1,1) {2};
        %     \node[circle, draw] (n8d) at (5,1) {8};
        %     \node[circle, draw] (n1d) at (0,0) {1};
        %     \node[circle, draw] (n3d) at (2,0) {3};
        %     \node[circle, draw] (n6d) at (3,1) {6};
        %     \draw[thick] (n5d) -- (n4d);
        %     \draw[thick] (n4d) -- (n2d);
        %     \draw[thick] (n2d) -- (n1d);
        %     \draw[thick] (n2d) -- (n3d);
            
        %     \draw[thick] (n5d) -- (n7d);
        %     \draw[thick] (n7d) -- (n6d);
        %     \draw[thick] (n7d) -- (n8d);
        % \end{tikzpicture}
        % }
        % \qquad 
        % \resizebox{.45\textwidth}{!}{\begin{tikzpicture}[x=1.5cm, y=1.5cm]
        %     \node[circle, draw] (n5d) at (3,3) {2};
        %     \node[circle, draw] (n4d) at (2,2) {1};
        %     \node[circle, draw] (n6d) at (4,2) {1};
        %     \node[circle, draw] (n2d) at (1,1) {1};
        %     \node[circle, draw] (n8d) at (5,1) {0};
        %     \node[circle, draw] (n1d) at (0,0) {0};
        %     \node[circle, draw] (n3d) at (2,0) {0};
        %     \node[circle, draw] (n7d) at (3,1) {0};
        %     \draw[thick] (n5d) -- (n4d);
        %     \draw[thick] (n4d) -- (n2d);
        %     \draw[thick] (n2d) -- (n1d);
        %     \draw[thick] (n2d) -- (n3d);
            
        %     \draw[thick] (n5d) -- (n6d);
        %     \draw[thick] (n7d) -- (n6d);
        %     \draw[thick] (n6d) -- (n8d);
        % \end{tikzpicture}
        % }
        \caption{The BST $\mathcal T(\pi)$ presented with insert key labels and HS number labels, using the  butterfly permutation $\pi = [19,\!20,\!17,\!18,\!23,\!24,\!22,\!21,\!30,\!29,\!31,\!32,\!26,\!25,\!27,\!28,\!8,\!7,\!5,\!6,\!3,\!4,\!1,\!2,\!15,\!16,\!13,\!14,\!12,\!11,\!10,\!9] \in \B_5.$}
        \label{fig: BST HS number keys}
    \end{figure}
    %1011111001100101010010010110100

Results on the HS number of binary trees date back to the 1950s, beginning with the work of Ershov~\cite{ershov58} on register allocation. The study of HS number statistics for random trees emerged in the 1970s through the analytic combinatorics framework of Flajolet~\cite{flajolet1977}. A central model in this line of work is that of \emph{Catalan trees}, uniformly sampled rooted binary trees with $m$ internal nodes, $\mathcal T_m$. This moniker follows as these trees are enumerated by the Catalan numbers,
$$C_m = \frac1{m+1}\binom{2m}m.$$ 
and arise in a wide range of combinatorial settings (e.g., see~\cite{Stanley2015CatalanNumbers} for 214 such interpretations).

In contrast, results for random BSTs are more limited, as the induced distribution on tree shapes is non-uniform: each tree is weighted proportionally to the number of permutations that generate it. For example, the permutations $213$ and $231$ produce the same BST, $\mathcal T(213) = \mathcal T(231)$, while the permutation $123$ yields a different shape, and each underlying binary tree shape would be equiprobable under the Catalan  modeling.

For Catalan trees $\mathcal T_m$, classical results~\cite{flajolet1977,kemp1978,meir1980} show that $\hs(\mathcal T_m)\in\{0,\dots,\lfloor \log_2(m+1)\rfloor\}$ and
\begin{equation}\label{eq:catalan_mean_var}
\mathbb{E}[\hs(\mathcal T_m)]
= \log_4 m + D(\log_4 m) +\mathcal O(1),
\qquad
\operatorname{Var}(\hs(\mathcal T_m)) =\mathcal O(1),
\end{equation}
where $D$ is a continuous $1$-periodic function. As a consequence, a \textit{Weak Law of Large Numbers (WLLN)} follows from Chebyshev’s inequality:
\begin{equation}\label{eq:catalan_wlln}
\frac{\hs(\mathcal T_m)}{\log_2 m}
\xrightarrow[m\to\infty]{\mathbb{P}} \frac{1}{2}.
\end{equation}
Sharper concentration bounds were later obtained in~\cite{DK95,K99}. These results have since been extended to other random tree models, including conditioned Galton–Watson trees and stable variants~\cite{brandenberger2021horton,khanfir2023horton}.

Despite this progress, second-order fluctuations of the HS number in random tree models remain poorly understood. In classical settings, such as Catalan or Galton--Watson trees, periodic oscillations in the underlying profile typically obstruct a central limit theorem (CLT) at the level of the standard HS statistic. A recent development by Khanfir~\cite{khanfir2024fluctuations} introduces a real-valued variant of the HS number that converges to a functional of the stable Lévy tree arising in the Galton--Watson scaling limit, highlighting the strong dependence of fluctuation behavior on the underlying tree geometry.

This sensitivity motivates the study of alternative recursive constructions in which the HS number admits more tractable second-order structure. In this work, we consider a class of recursively generated binary trees—\emph{simple butterfly trees}—for which the HS evolution reduces to an additive functional of a finite-state Markov chain. This structure yields both a law of large numbers and a functional central limit theorem with explicit variance, providing a rare setting in which Gaussian fluctuations can be established for the HS number.

Our construction is based on block-merging operations.
Given rooted binary trees $\mathcal T_1$ and $\mathcal T_2$, define $\mathcal T_1 \oplus \mathcal T_2$ (resp. $\mathcal T_1 \ominus \mathcal T_2$) by attaching $\mathcal T_2$ along the rightmost (resp. leftmost) branch of $\mathcal T_1$. See \Cref{fig: BST gluing example} for examples of $\mathcal T_1 \oplus \mathcal T_2$ and $\mathcal T_1 \ominus \mathcal T_2$. These operations correspond to direct and skew sums of generating permutations for corresponding BST and arise naturally in the study of separable permutations and divide-and-conquer algorithms~\cite{PZ24,bassino2018brownian,maazoun2020brownian,Blelloch2016parallel,HHNA22}.

We focus on \emph{butterfly trees} $\mathcal T_n^{\B}$, which are constructed using only merging operators $\oplus$ or $\ominus$. These are formed recursively with two input butterfly trees of the previous level and a gluing bit, defined by
\begin{equation}\label{eq: butterfly tree def}
\mathcal T_{n+1}^{\B} =
\begin{cases}
\mathcal T_n^{\B} \oplus \widetilde{\mathcal T}_n^{\B}, & x=0,\\
\mathcal T_n^{\B} \ominus \widetilde{\mathcal T}_n^{\B}, & x=1,
\end{cases}
\end{equation}
starting from a single node. In the \emph{simple} case, the two input trees are identical at each step.

Butterfly trees were introduced in \cite{PZ25}, and are so named as they are directly connected to \textit{butterfly matrices}, a recursive family of orthogonal transformations used in numerical linear algebra (see \cite{Pa95,Tr19, PT23}). Butterfly trees are exactly formed as BSTs of the induced permutations from applying Gaussian elimination with partial pivoting to butterfly matrices \cite{P24,PZ24}. Such permutations are named \textit{butterfly permutations}, again from this common provenance. \Cref{fig: BST HS number keys} shows an example of a butterfly tree with $32$ nodes.

\subsection{Main results}

\begin{figure}%[ht]
    \centering
    % First column: Stacked subfigures
    \begin{minipage}[c]{0.2\textwidth} 
        {% Vertically center the content
        \begin{subfigure}[b]{\textwidth}
            \centering
            \begin{tikzpicture}
            \node[circle, draw] (n4) at (2,1) {0};
            \node[circle, draw] (n3) at (1,2) {1};
            \node[circle, draw] (n1) at (0,1) {0};
            \node[circle, draw] (n2) at (1,0) {0};
            \draw[thick] (n3) -- (n4);
            \draw[thick] (n3) -- (n1);
            \draw[thick] (n2) -- (n1);
            \end{tikzpicture}
            \caption{$\mathcal T_1$}
        \end{subfigure}
        \vspace{0.5cm}
        \begin{subfigure}[b]{\textwidth}
            \centering
            \begin{tikzpicture}
            \node[circle, draw] (n3) at (2,1) {0};
            \node[circle, draw] (n2) at (1,2) {1};
            \node[circle, draw] (n1) at (0,1) {0};
            \draw[thick] (n3) -- (n2);
            \draw[thick] (n2) -- (n1);
            \end{tikzpicture}
            \caption{$\mathcal T_2$}
        \end{subfigure}}
    \end{minipage}
    \hfill
    % Second column
    \begin{minipage}[c]{0.3\textwidth}
        \begin{subfigure}[c]{\textwidth}
            \centering
            \begin{tikzpicture}
            \node[line width=1mm, circle, draw=blue, fill=blue!20] (n4) at (2,1) {$\V1$};
            \node[line width=1mm, circle, draw=blue] (n3) at (1,2) {1};
            \node[circle, draw] (n1) at (0,1) {0};
            \node[circle, draw] (n2) at (1,0) {0};
            \draw[line width=1mm, blue] (n3) -- (n4);
            \draw[thick] (n3) -- (n1);
            \draw[thick] (n2) -- (n1);

            \node[circle, draw] (n7) at (4,-1) {0};
            \node[circle, draw] (n6) at (3,0) {1};
            \node[circle, draw] (n5) at (2,-1) {0};
            \draw[thick] (n7) -- (n6);
            \draw[thick] (n6) -- (n5);

            \draw[line width=1mm, dotted, blue] (n4) -- (n6);
            \end{tikzpicture}
            \caption{$\mathcal T_1 \oplus \mathcal T_2$}
        \end{subfigure}
    \end{minipage}
    \hfill
    % Third column
    \begin{minipage}[c]{0.4\textwidth}
        \begin{subfigure}[d]{\textwidth}
            \centering
            \begin{tikzpicture}
            \node[circle, draw] (n7) at (2,1) {0};
            \node[line width=1mm, circle, draw=blue] (n6) at (1,2) {1};
            \node[line width=1mm, circle, draw=blue, fill=blue!20] (n4) at (0,1) {$\V1$};
            \node[circle, draw] (n5) at (1,0) {0};
            \draw[thick] (n6) -- (n7);
            \draw[line width=1mm, blue] (n6) -- (n4);
            \draw[thick] (n5) -- (n4);

            \node[circle, draw] (n3) at (0,-1) {0};
            \node[circle, draw] (n2) at (-1,0) {1};
            \node[circle, draw] (n1) at (-2,-1) {0};
            \draw[thick] (n3) -- (n2);
            \draw[thick] (n2) -- (n1);

            \draw[line width=1mm, blue, dotted] (n4) -- (n2);
            \end{tikzpicture}
            \caption{$\mathcal T_1 \ominus \mathcal T_2$}
        \end{subfigure}
    \end{minipage}

    \caption{$\mathcal T_1 \oplus \mathcal T_2$ and $\mathcal T_1 \ominus \mathcal T_2$ formed using $\mathcal T_1 = \mathcal T(3142)$ and $\mathcal T_2 = \mathcal T(213)$ with dotted gluing edge. Updated HS number labels on the merged tree highlight nodes that can have updated HS number labels, with an additional light blue shade indicating a node that has HS number label exceed its associated prior HS number label.}
    \label{fig: BST gluing example}
\end{figure}

We begin with a single gluing step. Let $\mathcal T_1$ and $\mathcal T_2$ be binary trees with $m$ nodes, and consider the merged tree obtained via one application of $\oplus$ (or $\ominus$). Our first result shows that, at leading order, the HS number of these merged block trees matches that of a binary tree with $2m$ nodes.

\begin{theorem}\label{thm:block}
Let $\mathcal T_1,\mathcal T_2$ be independent and identically distributed (iid) Catalan trees with $m$ nodes. Then
\[
\frac{\hs(\mathcal T_1 \oplus \mathcal T_2)}{\log_2(2m)}
\xrightarrow[m \to \infty]{\P} \frac12, \qquad \supp(\hs(\mathcal T_1 \oplus \mathcal T_2)) = \{0,\ldots,\floor{\log_2(2m + 1)}\}.
\]
The same holds for $\mathcal T_1 \ominus \mathcal T_2$.
\end{theorem}
The proof of \Cref{thm:block} is based on explicit merging rules for the HS number. For binary trees $\mathcal T_1$ and $\mathcal T_2$,
\[
\hs(\mathcal T_1 \oplus \mathcal T_2)
= \max\big(\hs(\mathcal T_1),\hs(\mathcal T_2)\big) + I,
\]
where $I \in \{0,1\}$ is an increment determined by the local structure at the gluing interface (see \Cref{prop: merge increment}).

A key simplification is that the HS update depends only on the \emph{edge profile} of each tree, rather than the full collection of node labels. More precisely, it suffices to track the HS values along the top gluing edge, since updates can occur only along this path; see \Cref{fig: BST gluing example,fig:gluing-perfect-binary}. The resulting profile dynamics are given explicitly in \Cref{prop: profile update}, and allow one to compute the HS number of a merged tree using only the edge profiles of its inputs.

This reduction leads to an efficient algorithmic representation. A butterfly tree with $N=2^n$ nodes can be encoded by a bitstring $\mathbf{x}\in\{0,1\}^{N-1}$, where each bit specifies a $\oplus/\ominus$ merge in the recursive construction. The HS number can then be computed in $\mathcal O(N)$ operations directly from $\mathbf{x}$ using the profile update rules (see \Cref{alg:hs_profile_corrected}). Equivalently, the tree may be viewed as a functional of the first $n$ levels of an infinite binary tree with $\oplus/\ominus$ labels at each node, yielding $N-1$ labels in total.

\medskip

We now consider butterfly trees $\mathcal T_n^{\B}$ with $N=2^n$ nodes. For all such trees,
\[
\supp(\hs(\mathcal T_n^{\B})) = \{0,\ldots,n/2\}
\quad \text{(see \Cref{prop: butterfly support})}.
\]
Thus, while a single merge preserves the classical support, iterated merging leads to a strictly smaller range. In particular, $n/2 = \tfrac12 \log_2 N$ becomes a deterministic upper bound, in contrast to the concentration behavior in Catalan trees.

\medskip

In the \emph{simple} butterfly model, the encoding compresses further: the recursive construction is determined by a bitstring $\mathbf{x}\in\{0,1\}^n$, corresponding to identical merge choices at each level. In this case, the HS number can be computed in $\mathcal O(n)$ time directly from $\mathbf{x}$ (see \Cref{sec: computing} and \Cref{alg:HS number_run_length}), an exponential improvement over the $\mathcal O(N)$ complexity required for general binary trees. This compression enables experiments at otherwise inaccessible scales; for example, Figure~\ref{fig: HS number normal limit} shows a histogram based on $10^6$ samples of simple butterfly trees with $N=2^{100{,}000}$ nodes.

For random simple butterfly trees, the HS number admits an exact representation as an additive functional of a finite-state Markov chain. Writing
\[
X_j = \hs(\mathcal T_j^{\B}) - \hs(\mathcal T_{j-1}^{\B}),
\]
the increment process $(X_j)$ is determined by the local gluing rules (see \Cref{sec: block}), which simplify substantially in the simple butterfly setting. As a result, the HS recursion reduces to a finite-state dynamical system, and
\[
\hs(\mathcal T_n^{\B}) = \sum_{j=1}^n X_j
\]
can be expressed as an additive functional of a finite irreducible Markov chain.

This reduction fundamentally alters the probabilistic behavior of the model. In classical random tree models (e.g., Catalan trees), the HS statistic depends on global combinatorial structure and exhibits persistent oscillatory effects that obstruct standard limit theorems. In contrast, the simple butterfly construction places the HS process in an ergodic Markovian setting, making it amenable to classical limit theory, including Gaussian fluctuations.

\begin{figure}
    \centering
    \includegraphics[width=0.7\linewidth]{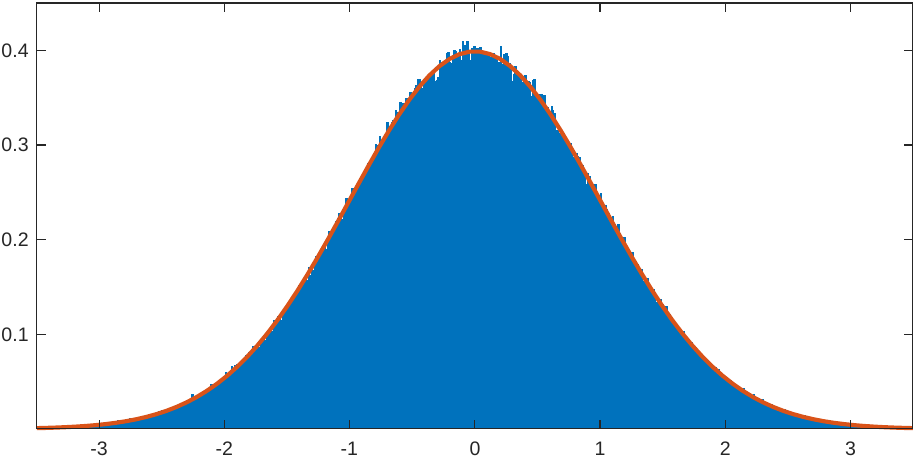}
    \caption{Histogram of normalized HS numbers for $1{,}000{,}000$ uniform simple butterfly trees with $2^{100{,}000} \approx 10^{30{,}103}$ nodes.}
    \label{fig: HS number normal limit}
\end{figure}

\begin{theorem}[Markov modeling of HS number]\label{thm:HS number_MC_model}
Let $(x_j)_{j\ge1}$ be iid $\Bern(p)$ and define
\[
X_0=0, 
\qquad 
X_j=\textnormal{\texttt{xor}}(x_j,x_{j-1})\,(1-X_{j-1}), \quad j\ge1,
\]
and
\[
M_j=(x_j,x_{j-1},X_{j-1})\in\{0,1\}^3.
\]
Then $(M_j)$ is an irreducible, aperiodic $8$-state Markov chain and
\[
\hs(\mathcal T_n^{\B}(\mathbf x))=\sum_{j=1}^n X_j = \sum_{j=1}^n f(M_j),
\]
where $f(a,b,c)=\textnormal{\texttt{xor}}(a,b)(1-c)$.
\end{theorem}

Thus, the HS number of simple butterfly trees is an additive functional of a finite irreducible Markov chain, placing the model in a classical ergodic setting and yielding a complete limit theory.

\begin{theorem}[SLLN]\label{thm:SLLN}
For $p\in[0,1]$,
\[
\frac{1}{n}\hs(\mathcal T_n^{\B})
\xrightarrow[n\to\infty]{\mathrm{a.s.}}
\mu_p
=
\frac{pq}{1-pq}, \qquad q=1-p.
\]
\end{theorem}

The almost sure convergence in Theorem~\ref{thm:SLLN} strengthens the typical state of results for HS statistics in random tree models, where one generally obtains only weak laws of large numbers for suitably normalized versions of the HS number.

\begin{theorem}[Functional CLT]\label{thm:FCLT}
Define
\[
W_n(p,t)
=
\sqrt{n}\left(\frac{1}{n}\sum_{j=1}^{\lfloor nt\rfloor}X_j - t\mu_p\right),
\qquad t\in[0,1].
\]
Then
\[
W_n(p,\cdot)\Longrightarrow \sigma_p B(\cdot)
\quad \text{in } D[0,1], \qquad 
\sigma_p^2
=
\frac{pq(1-3pq-2p^2q^2)}{(1-pq)^3}.
\]
\end{theorem}

Theorem~\ref{thm:FCLT} upgrades the limit theory from scalar fluctuations to a full functional central limit theorem, providing a Gaussian process description of HS fluctuations at diffusive scale with explicit variance. 

As immediate consequences, setting $t=1$ yields both a weak law and a central limit theorem:
\begin{corollary}[WLLN]\label{cor:wlln}
\[
\frac{\hs(\mathcal T_n^{\B})}{n}
\xrightarrow[n\to\infty]{\mathbb{P}} \mu_p.
\]
\end{corollary}

\begin{corollary}[CLT]\label{cor:CLT}
\[
\frac{\hs(\mathcal T_n^{\B})-\mu_p n}{\sqrt n}
\Longrightarrow \mathcal N(0,\sigma_p^2).
\]
\end{corollary}

In particular, the HS number exhibits Gaussian fluctuations with linear variance growth. The limiting constant satisfies $\mu_p \le \mu_{1/2} = 1/3$, strictly below the Catalan tree benchmark $1/2$, reflecting the reduced effective branching complexity induced by the butterfly construction. Together, these results provide a setting in which both first- and second-order asymptotics of the HS number are fully explicit.

\medskip

We next consider general butterfly trees, in which level-$(n+1)$ trees are formed from independent copies of level-$n$ trees via the recursion
\[
\hs(\mathcal T_{n+1}^{\B})
=
\max\left(\hs(\mathcal T_n^{\B}), \hs(\widetilde{\mathcal T}_n^{\B})\right) + I_n,
\]
where $I_n \in \{0,1\}$ is determined by the interface profiles of the input trees and the choice of $\oplus/\ominus$.

In contrast to the simple butterfly model, this recursion does not admit a finite-state representation: the interface profile grows with $n$ and governs the increment mechanism. Consequently, the HS process evolves via a scale-dependent max-plus recursion.

Using exact computations for small $n$ and Monte Carlo simulations up to $n=25$, we obtain the following asymptotic behavior.

\begin{conjecture}[WLLN]\label{conj: wlln butterfly}
For $\mathcal T_n^{\B}$ a uniform butterfly tree with $N = 2^n$ nodes,
\[
\frac{1}{n}\hs(\mathcal T_n^{\B})
\xrightarrow[n\to\infty]{\mathbb{P}} \alpha \approx 0.4450.
\]
\end{conjecture}

\begin{figure}
    \centering
    \includegraphics[width=0.75\linewidth]{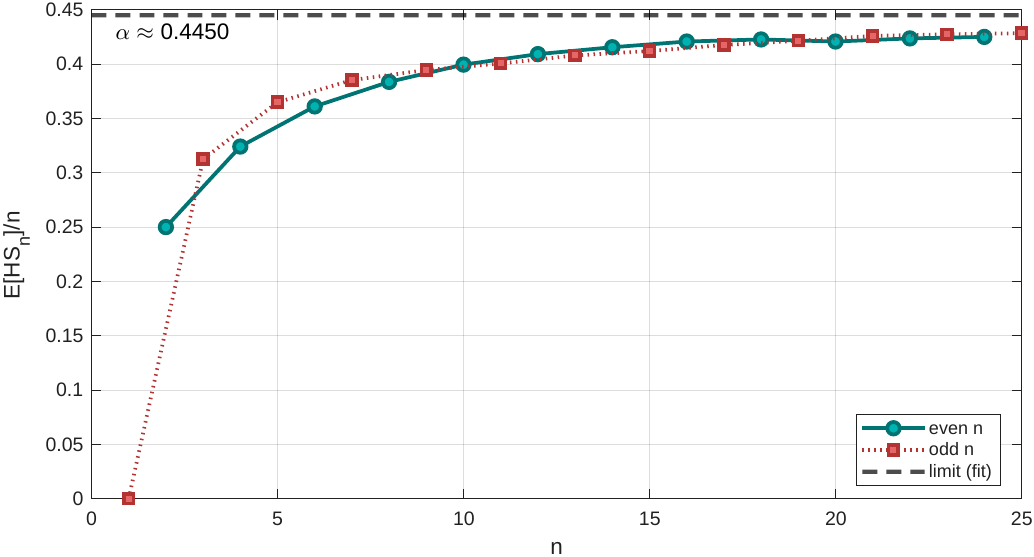}
    \caption{Plot of $\frac1n\E[\hs(\mathcal T_n^{\B})]$ using exact computations for $n \le 8$ and Monte Carlo sampling up to $n = 25$ for uniform butterfly trees $\mathcal T_n^{\B}$ with $N = 2^n$ nodes.}
    \label{fig:mean_limit}
\end{figure}

\Cref{fig:mean_limit} plots values of $\frac1n \E[\hs(\mathcal T_n^{\B})]$, along with a computed best fit limiting $\alpha \approx 0.4450$, which further highlights parity structure on each exact (up to $n = 8$) or sampled mean (up to $n = 25$). In contrast to classical random tree models, the limiting behavior here is governed by a cascade-driven max-plus recursion with a growing interface state, rather than a fixed-point distributional equation. Moreover, assuming \Cref{conj: wlln butterfly}, then uniform butterfly trees comprise a distinct universality class with limiting scaled HS number that lies between the $1/3$ limit seen by simple butterfly trees and the $1/2$ limit of Catalan trees.

\subsection{Outline} 
The remainder of \Cref{sec: intro} provides additional notation conventions and background. \Cref{sec: block} outlines in more details the HS number dynamics determined by the a single gluing operator $\oplus/\ominus$ and contains the proof of Theorem \ref{thm:block} for the HS number of block trees. This includes \Cref{prop: merge increment,prop: profile update}, that explicitly outline merge increment and profile update dynamics that drive the recursive HS modeling in subsequent sections. \Cref{sec: butterfly} addresses the results for butterfly trees. We first establish the reduced support for the HS number (\Cref{prop: butterfly support}), before shifting focus to simple butterfly in \Cref{sec: simple}. We provide a recursive and direct $\mathcal O(n)$ method to compute the HS number of simple butterfly trees in \Cref{sec: computing}, while \Cref{sec: random simple} provides the proof of the finite-state Markov process modeling of the HS number (\Cref{thm:HS number_MC_model}), that is used to then prove the main limit theorems (\Cref{thm:SLLN,thm:FCLT} and \Cref{cor:wlln,cor:CLT}), while \Cref{sec: unif simple} outlines additional combinatorial results for the uniform case ($p=1/2$). \Cref{sec: nonsimple} returns to the general butterfly tree models, where the general RDE is revisited in \Cref{sec: rand gen butterfly}, while \Cref{sec: limiting dist} includes exact and empirical results in support of the WLLN for uniformly random butterfly trees (\Cref{conj: wlln butterfly}). \Cref{sec: conclusions} addresses open problems and future directions for butterfly trees. \Cref{sec: algorithms} includes explicit algorithms to compute the HS number of butterfly trees from an input bitstring, which were used for the supporting exact computations and experiments with further results summarized in \Cref{sec: exact,sec: experiment data}. 

\subsection{Notation and background}
\label{sec: notation}

Write $\operatorname{Sym}(m)$ for the symmetric group of permutations $\pi$ of length $m$. For permutations $\pi \in \operatorname{Sym}(n)$ and $\pi' \in \operatorname{Sym}(m)$, then we define the direct sum $\pi \oplus \pi'$ and the skew sum $\pi \ominus \pi'$ permutations in $\operatorname{Sym}(n + m)$ by the induced action on the underlying permutation matrices, $P_{\pi \oplus \pi'} = P_\pi \oplus P_{\pi'}$ and $P_{\pi \ominus \pi'} = P_{\pi} \ominus P_{\pi'}$, where
\[
A \oplus B = \begin{bmatrix}
    A & \V 0\\ \V 0 & B
\end{bmatrix}, \qquad A \ominus B = \begin{bmatrix}
    \V 0 & B\\ A & \V0
\end{bmatrix}.
\]

\textit{Butterfly permutations} $\B_n \subset \operatorname{Sym}(2^n)$ are formed by iterated wreath products of $\operatorname{Sym}(2) \cong C_2$, where $\B_1 = \operatorname{Sym}(2) = \{1,(1 \ 2)\}$ and
\[
\B_n  = (\B_{n-1} \oplus \B_{n-1}) \rtimes \operatorname{Sym}(2) \cong C_2 \wr \cdots \wr C_2 = C_2^{\wr n},
\]
where the semidirect action of $\rho \in \operatorname{Sym}(2) = \{1, (1 \ 2)\}$ on $ \pi_1 \oplus   \pi_2 \in \B_n \oplus \B_n$ is 
\[
\rho \cdot ( \pi_1 \oplus  \pi_2) =\pi_{\rho(1)} \oplus  \pi_{\rho(2)}.
\]
This aligns with the definition of the wreath product, where we note then precisely each element of $\B_n$ is of the form ${\pi} \oplus {\pi}'$ or ${\pi} \ominus {\pi}'$ for $\pi,\pi' \in \B_{n-1}$. The \textit{simple} butterfly permutations $$\B_{n,s} \cong C_2^{\otimes n}$$ are similarly formed using iterated Kronecker products, that correspond to using identical copies of $\pi_1 = \pi_2 \in \B_{n-1,s}$ at each recursive step. These constructions result in the simple butterfly permutations comprising an abelian subgroup of order $N = 2^n$ inside the group of butterfly permutations of order $2^{N-1}$ that thus themselves comprise a 2-Sylow subgroup of $\operatorname{Sym}(2^n)$.

Moreover, butterfly permutations thus also are comprised of \emph{separable} permutations, which are permutations formed using only direct and skew sums; equivalently separable permutations are permutations that avoid the patterns 2413 and 3142.

\section{Block trees}\label{sec: block}

We initiate our HS number study on \textit{block trees}, formed by merging two rooted binary trees $\mathcal T_1$ and $\mathcal T_2$ using a single operator $\oplus$ or $\ominus$, where we will refer to $\mathcal T_1$ as the parent tree and $\mathcal T_2$ as the child tree. The resulting trees $\mathcal T_1 \oplus \mathcal T_2$ and $\mathcal T_1 \ominus \mathcal T_2$ arise from BST constructions where $\mathcal T(\pi_i)$ has underlying shape $\mathcal T_i$, so that these are precisely the shapes of $\mathcal T(\pi_1 \oplus \pi_2)$ and $\mathcal T(\pi_1 \ominus \pi_2)$.

These are special cases of block trees introduced in \cite{PZ25}, given by
\[
\mathcal T_1 \oplus \mathcal T_2 = \mathcal T(12) \, \boxdot \, (\mathcal T_1,\mathcal T_2), 
\qquad 
\mathcal T_1 \ominus \mathcal T_2 = \mathcal T(21) \, \boxdot \, (\mathcal T_1,\mathcal T_2).
\]
Equivalently, $\oplus$ (resp.\ $\ominus$) attaches the root of $\mathcal T_2$ to the terminal node of the top-right (resp.\ top-left) edge of $\mathcal T_1$. These nodes correspond to the extremal BST labels, ensuring that all keys in $\mathcal T_2$ are larger (resp.\ smaller) than those of $\mathcal T_1$. See \Cref{fig: BST gluing example,fig:gluing-perfect-binary} for illustrations.

A related study for BST height was carried out in \cite{PZ25}. There, a single gluing operation on two random BSTs with $m$ nodes increases the asymptotic height compared to a uniform BST on $2m$ nodes. Recall that for a random BST with $m$ nodes and uniform permutation labels, $\texttt{height}(\mathcal T_m) \sim c^* \log m$, where $c^* \approx 4.311$ solves $x \log(2e/x)=1$ \cite{devroye1986note}. In contrast, for iid $\pi,\pi' \sim \Unif(\operatorname{Sym}(m))$, the glued trees satisfy
\[
\texttt{height}(\mathcal T(\pi \oplus \pi')) \sim (c^*+1)\log(2m),
\]
and similarly for $\ominus$ \cite{PZ25}.

A key ingredient in this analysis is the length evolution along the top edges, requiring one to track both subtree profiles and the joining dynamics. For the HS number, an analogous but more intricate structure arises: while height depends only on edge lengths, the HS number requires tracking the full HS profile along each top edge, together with how these profiles interact under merging.

\begin{figure}
    \centering
    % First tree T1
    \begin{subfigure}{.5\textwidth}
    \centering
    \resizebox{\textwidth}{!}{
    \begin{tikzpicture}[x=1.5cm, y=1.5cm]
        \node[circle, draw] (n0d) at (3,3) {2};
        
        \node[circle, draw] (n00d) at (1,2) {2};
        \node[circle, draw] (n01d) at (5,2) {1};
        
        \node[circle, draw] (n000d) at (0,1) {1};
        \node[circle, draw] (n001d) at (2,1) {1};
        \node[circle, draw] (n010d) at (4,1) {1};
        
        \node[circle, draw] (n0000d) at (-0.5,0) {0};
        \node[circle, draw] (n0001d) at (0.5,0) {0};
        \node[circle, draw] (n0010d) at (1.5,0) {0};
        \node[circle, draw] (n0011d) at (2.5,0) {0};
        \node[circle, draw] (n0100d) at (3.5,0) {0};
        \node[circle, draw] (n0101d) at (4.5,0) {0};
        
        \draw[thick] (n0d) -- (n00d);
        \draw[thick] (n0d) -- (n01d);
        
        \draw[thick] (n00d) -- (n000d);
        \draw[thick] (n00d) -- (n001d);
        \draw[thick] (n01d) -- (n010d);

        \draw[thick] (n000d) -- (n0000d);
        \draw[thick] (n000d) -- (n0001d);
        \draw[thick] (n001d) -- (n0010d);
        \draw[thick] (n001d) -- (n0011d);
        \draw[thick] (n010d) -- (n0100d);
        \draw[thick] (n010d) -- (n0101d);
    \end{tikzpicture}
    }
    \caption{$\mathcal T_1$}
    \end{subfigure}%
    \hfill
    % Second tree T2
    \begin{subfigure}{.25\textwidth}
    \centering
    \resizebox{.5\textwidth}{!}{
    \begin{tikzpicture}[x=1.5cm, y=1.5cm]
        \node[circle, draw] (n011d) at (0,1) {1};
        \node[circle, draw] (n0110d) at (-0.5,0) {0};
        \node[circle, draw] (n0111d) at (0.5,0) {0};

        \draw[thick] (n011d) -- (n0110d);
        \draw[thick] (n011d) -- (n0111d);
    \end{tikzpicture}
    }
    \caption{$\mathcal T_2$}
    \end{subfigure}%
    \hfill
    % Glued tree T1 \oplus T2
    \begin{subfigure}{.6\textwidth}
    \centering
    \resizebox{\textwidth}{!}{
    \begin{tikzpicture}[x=1.5cm, y=1.5cm]
        \node[line width=1mm, circle, draw=blue, fill=blue!20] (n0d) at (3,3) {$\V3$};
        
        \node[circle, draw] (n00d) at (1,2) {2};
        \node[line width=1mm, circle, draw=blue, fill=blue!20] (n01d) at (5,2) {$\V2$};
        
        \node[circle, draw] (n000d) at (0,1) {1};
        \node[circle, draw] (n001d) at (2,1) {1};
        \node[circle, draw] (n010d) at (4,1) {1};
        \node[circle, draw] (n011d) at (6,1) {1};
        
        \node[circle, draw] (n0000d) at (-0.5,0) {0};
        \node[circle, draw] (n0001d) at (0.5,0) {0};
        \node[circle, draw] (n0010d) at (1.5,0) {0};
        \node[circle, draw] (n0011d) at (2.5,0) {0};
        \node[circle, draw] (n0100d) at (3.5,0) {0};
        \node[circle, draw] (n0101d) at (4.5,0) {0};
        \node[circle, draw] (n0110d) at (5.5,0) {0};
        \node[circle, draw] (n0111d) at (6.5,0) {0};
        
        \draw[thick] (n0d) -- (n00d);
        \draw[line width=1mm, blue] (n0d) -- (n01d);
        
        \draw[thick] (n00d) -- (n000d);
        \draw[thick] (n00d) -- (n001d);
        \draw[thick] (n01d) -- (n010d);
        \draw[line width=1mm, dotted, blue] (n01d) -- (n011d);

        \draw[thick] (n000d) -- (n0000d);
        \draw[thick] (n000d) -- (n0001d);
        \draw[thick] (n001d) -- (n0010d);
        \draw[thick] (n001d) -- (n0011d);
        \draw[thick] (n010d) -- (n0100d);
        \draw[thick] (n010d) -- (n0101d);
        \draw[thick] (n011d) -- (n0110d);
        \draw[thick] (n011d) -- (n0111d);
    \end{tikzpicture}
    }
    \caption{$\mathcal T_1 \oplus \mathcal T_2$}
    \end{subfigure}
    \caption{$\mathcal T_1$, $\mathcal T_2$, and their gluing $\mathcal T_1 \oplus \mathcal T_2$ showing the cascading increase in HS number.}
    \label{fig:gluing-perfect-binary}
\end{figure}

We will now outline precisely how the composite tree can have a strictly larger HS number than the input trees.

\begin{proposition}[Merge increment]\label{prop: merge increment}
    Let $\mathcal T_1,\mathcal T_2$ be rooted binary trees with $m$ nodes. Then 
    \begin{equation}\label{eq: HS number merge ineq}
        \max(\hs(\mathcal T_1),\hs(\mathcal T_2)) \le \hs(\mathcal T_1 \oplus \mathcal T_2)\le \max(\hs(\mathcal T_1),\hs(\mathcal T_2)) + 1,
    \end{equation}
    and the upper bound is tight if and only if all of the following increment conditions are met:
    \begin{enumerate}[label=(\roman*)]
        \item \textbf{Max parent}: $\hs(\mathcal T_1) \ge \hs(\mathcal T_2)$.
        \item \textbf{Full profile}: The top right edge of $\mathcal T_1$ has nodes with HS numbers $\hs(\mathcal T_2),\hs(\mathcal T_2) + 1, \ldots, \hs(\mathcal T_1)$.
        \item \textbf{Escape paths}: For each $t \in \{\hs(\mathcal T_2),\ldots,\hs(\mathcal T_1)\}$, the top right edge of $\mathcal T_1$ has a node $v$ with value $\hs(v) = t$ that has a child node $w$ located off of top right edge with $\hs(w) = t$.
    \end{enumerate}
    Identical results hold for $\mathcal T_1 \ominus \mathcal T_2$ when replacing the top right edge with the top left edge in the above increment conditions.
\end{proposition}

\begin{remark}
    In the composite tree, the only nodes that can have HS numbers differ from the corresponding labels from the input trees are found precisely on the top gluing edge of the parent tree. All other nodes inherit the previous HS numbers from the input trees. This is visualized in \Cref{fig: BST gluing example,fig:gluing-perfect-binary} with the top gluing edge of the parent tree highlighted in blue, and the HS number labels that differ from the input labels further highlighted in light blue. 
\end{remark}

\begin{proof}
The statement for $\mathcal T_1 \ominus \mathcal T_2$ follows by symmetry under reflection: if $\mathcal T^R$ denotes reflection of $\mathcal T$ about its root, then $\hs(\mathcal T) = \hs(\mathcal T^R)$ and 
\[
(\mathcal T_1 \ominus \mathcal T_2)^R = \mathcal T_1^R \oplus \mathcal T_2^R.
\]

First, we establish the inequalities in \eqref{eq: HS number merge ineq}. We will use the characterization of $\hs(\mathcal T)$ as the height of the largest embedded perfect binary subtree within $\mathcal T$. The lower bound is immediate, since any perfect subtree of $\mathcal T_1$ or $\mathcal T_2$ remains embedded after gluing. For the upper bound, any increase beyond this maximum must come from a perfect subtree within the composite tree that uses the gluing edge. Note in particular that the left and right subtrees of the root of the embedded perfect tree (note we can identity the root as the composite root) comprise perfect binary trees of height one less than this tree. Removing a single edge thus makes the binary tree no longer perfect, and the largest remaining perfect subtree is then contained entirely within the opposite subtree from the child root (and so is embedded within the parent tree). So its height decreases by at most one. See \Cref{fig:gluing-perfect-binary} for an example of such an increment.

\medskip

We now characterize when the increment occurs. Let
\[
\textbf{(A)} \quad \hs(\mathcal T_1 \oplus \mathcal T_2)
=
\max(\hs(\mathcal T_1),\hs(\mathcal T_2)) + 1.
\]
We recall again updates to HS numbers in the composite tree occur only along the top gluing edge of the parent tree and propagate upward with increments of at most one per step.

\medskip

\noindent
\fbox{\textbf{$\neg$(i)$\Rightarrow\neg$(A)}}
If $\hs(\mathcal T_2) > \hs(\mathcal T_1)$, then all nodes on the gluing edge of $\mathcal T_1$ have strictly smaller HS numbers. The propagated updates from the child tree thus form a path of nodes with HS number $\hs(\mathcal T_2)$ from the child root to the composite root. Since every node in $\mathcal T_1$ has HS number strictly smaller than $\hs(\mathcal T_2)$, the merging cannot create two children of equal maximal value for any node along this path. Hence, no increment occurs in particular at the composite root.

\medskip

\noindent
\fbox{\textbf{(i)$\wedge \neg$(ii)$\Rightarrow\neg$(A)}}
Assume $\hs(\mathcal T_1)\ge \hs(\mathcal T_2)$ but the full profile condition fails. Then necessarily $\hs(\mathcal T_1) > \hs(\mathcal T_2)$, since otherwise the interval $[\hs(\mathcal T_2),\hs(\mathcal T_1)]$ is degenerate. Let $t$ be the largest value in $[\hs(\mathcal T_2),\hs(\mathcal T_1)]$ that does not appear on the top gluing edge, and let $v$ be the first node along this edge with $\hs(v)=t+1$. Then $v$ must inherit its HS number within $\mathcal T_1$ from a child $w$ off the gluing edge, with $\hs(w) = t+1$. After merging, updates along the gluing edge can increase the HS number for any edge descendant of $v$ (if they exist) by at most one, so the composite gluing edge child $w'$ has HS number at most $t$ (if $v$ had no gluing edge child in $\mathcal T_1$, then $w'$ is the root of $\mathcal T_2$). Thus in the composite tree, $v$ has one child of value $t+1$ and one child of value at most $t$, and hence $\hs(v)=t+1$. Therefore no increment propagates past $v$ as this matches the HS value from the parent tree, and the HS numbers above $v$, including at the root, remain unchanged.

\medskip

\noindent
\fbox{\textbf{(i)$\wedge$(ii)$\wedge\neg$(iii)$\Rightarrow\neg$(A)}}
Let $t$ be maximal in $[\hs(\mathcal T_2),\hs(\mathcal T_1)]$ such that no node off the gluing edge of $\mathcal T_1$ has HS number $t$. Let $v$ be the maximal depth gluing edge node in $\mathcal T_1$ with $\hs(v)=t$, which then necessarily has two children, each with HS number $t-1$. Propagated updates through the merging can raise the gluing edge child HS number in the composite to at most $t$ while its internal child still has HS number $t-1$, so $\hs(v)$ remains at level $t$. Thus the increment cannot propagate past $v$, and the remaining HS number profile on the gluing edge above $v$ is inherited from that of $\mathcal T_1$.

\medskip

\noindent
\fbox{\textbf{(i)$\wedge$(ii)$\wedge$(iii)$\Rightarrow$(A).}}
Under all three conditions, for each $t \in [\hs(\mathcal T_2),\hs(\mathcal T_1)]$ there is a node $v$ on the gluing edge with an off-edge child $w$ with $\hs(v) = \hs(w) = t$ within $\mathcal T_1$. The initial update at the terminal node introduces value $\hs(\mathcal T_2)$ and propagates upward along the edge. When this propagated value reaches such a node at level $t$, its edge child now has value $t$ while its off-edge child already has value $t$, forcing an increment to $t+1$ at $v$. This increment then propagates further up the edge and repeats the same mechanism at the next level. Iterating this argument produces a cascade of increments up to the root, yielding $\hs(\mathcal T_1 \oplus \mathcal T_2)
=
\hs(\mathcal T_1)+1$.

\medskip

These implications together establish \fbox{\textbf{(i)$\wedge$(ii)$\wedge$(iii)$\Leftrightarrow$(A)}}, as we wanted to show.
\end{proof}

We can further condense the encoding of the merge increment dynamics via HS profiles, which track only the relevant information along the top edges. For a rooted binary tree $\mathcal T$, define an \textit{edge profile} $(m,(a_k,b_k)_{k\ge 0})$ by:
\begin{itemize}
    \item $m$ indicates whether there is a multiplicity of nodes on the edge with maximal HS number,
    \item $a_k \in \{0,1\}$ indicates whether the edge contains a node with HS number $k$,
    \item $b_k \in \{0,1\}$ indicates whether the maximal-depth node on the edge with HS number $k$ has a child off the edge with the same HS number (an \emph{escape path}).
\end{itemize}
If $a_k=1$, then nodes with HS number $k$ form a path along the edge. If additionally $b_k=1$, this path extends off the edge to an internal node (escaping off the edge), while if $b_k=0$, the maximal-depth node with HS number $k$ has two children of value $k-1$, so the path is confined to the edge (i.e., no escape path).

Since only nodes along the top gluing edge of the parent tree can change under merging, it suffices to track these edge profiles rather than the full tree. The only additional interaction between the two top edges occurs at the root, and is captured by the multiplicity indicator for the maximal HS value.

We therefore define the HS profile of $\mathcal T$ as
\[
\texttt{profile}(\mathcal T) = (\hs(\mathcal T); L; R),
\]
where $L$ and $R$ are the top left and right edge profiles. Each edge profile can be represented as a $2 \times (\hs(\mathcal T)+1)$ matrix with rows $(a_k)$ and $(b_k)$ for $0 \le k \le \hs(\mathcal T)$, together with the multiplicity bit. We note reflection preserves HS number and swaps edge profiles: if $(h;L;R)$ is the profile of $\mathcal T$, then $(h;R;L)$ is the profile of $\mathcal T^R$. 

\begin{example}
The butterfly tree $\mathcal T_5^{\B}$ in \Cref{fig: BST HS number keys} has profile
\[
\textnormal{\texttt{profile}}(\mathcal T_5^{\B}) 
= \left(2;\; 1, \begin{bmatrix}
1&1&1\\
1&0&0
\end{bmatrix};\; 0, \begin{bmatrix}
1&1&1\\
0&1&1
\end{bmatrix}\right).
\]
Since $\hs(\mathcal T_5^{\B})=2$, each edge profile is encoded by a $2\times 3$ matrix (for $k=0,1,2$) together with a multiplicity bit. The top rows are all $1$’s, as both edges contain nodes with HS numbers $0,1,2$. On the left edge, the maximal-depth node with HS number $0$ inherits from an off-edge child, so $b_0=1$, while for $k=1,2$ the maximal-depth nodes increment from two children of value $k-1$, giving $b_1=b_2=0$. On the right edge, the maximal-depth node with HS number $0$ is a leaf, so $b_0=0$, whereas the maximal-depth nodes with HS numbers $1$ and $2$ inherit from off-edge children, yielding $b_1=b_2=1$. Finally, only the left edge contains multiple nodes with maximal HS number $2$, so $m^{(L)}=1$ and $m^{(R)}=0$.
\end{example}

We now describe the profile update under $\oplus$ using \Cref{prop: merge increment}. Let $\mathcal T_p$ (parent) and $\mathcal T_c$ (child) have profiles $(h_p;L_p;R_p)$ and $(h_c;L_c;R_c)$.

\medskip

\noindent\textbf{Case 1: $h_c > h_p$.}
Then the path from the child root to the parent root carries HS number $h_c$, so multiple nodes attain this value. Moreover, then since the maximal depth node with HS value $t \le h_c$ are all located within the child tree, then the right profile is inherited from the child, with now $m^{(R)}=1$.

On the left edge, the parent profile is unchanged below level $h_p$, while a new level appears at $h_c$ with
\[
(a_{h_c}^{(L)},b_{h_c}^{(L)})=(1,1), \qquad m^{(L)}=0.
\]
At level $h_p$, the entry is inherited if $m^{(L_p)}=1$ (as the max depth node with HS number $h_p$ remains unchanged), and otherwise removed (since the root is updated, removing the only node with HS number $h_p$ if $m^{(L_p)} = 0$), giving $(0,0)$.

\medskip

\noindent\textbf{Case 2: $h_c \le h_p$.}
Attach the child at the terminal node of the top-right edge. Let $v$ be the maximal-depth node with $\hs(v)=h_c$. The update propagates upward along this edge, and an increment occurs at level $k$ precisely when $(a_k^{(R_p)},b_k^{(R_p)})=(1,1)$. This produces a cascade of increments up to
\[
k^\star := \min\{k \ge h_c : (a_k^{(R_p)},b_k^{(R_p)}) \neq (1,1)\}.
\]

Thus:
\begin{itemize}
    \item For $k \le h_c$, the right profile is inherited from the child. (The max depth edge node with HS number $t \le h_c$ is in the child tree.)
    \item For $h_c < k \le k^\star$, we obtain $(a_k,b_k)=(1,0)$ from the cascade. (Each max depth edge node with HS value $k$ incremented from having children with matching HS number.)
    \item For $k^\star < k \le h_p$, the profile matches that of the parent. (The cascade increment stops below this level, so the remaining parent tree profile remains unchanged.)
\end{itemize}

\medskip

\noindent\textit{Subcases.}

\begin{itemize}
    \item If $k^\star = h_p+1$, the cascade reaches the root, so
    \[
    h = h_p+1 = \max(\hs(\mathcal T_p),\hs(\mathcal T_c)) + 1.
    \]
    The right multiplicity is preserved, $m^{(R)}=m^{(R_p)}$, as the cascade increment overwrite the previous max HS number path from the parent. The left profile agrees with the parent below $h_p$, and gains a new level with
    \[
    (a_{h_p+1}^{(L)},b_{h_p+1}^{(L)})=(1,m^{(R)}), \qquad m^{(L)}=0.
    \]

    \item If $k^\star = h_p$, the cascade stops just below the root. Then $h=h_p$ and the right edge contains a path of nodes with value $h_p$, so $m^{(R)}=1$. The left edge updates to
    \[
    (a_{h_p}^{(L)},b_{h_p}^{(L)})=(1,1), \qquad m^{(L)}=0.
    \]

    \item If $k^\star < h_p$, the cascade stops strictly below the top level. The remaining right profile and multiplicity are inherited from the parent, and the left profile is unchanged.
\end{itemize}

\medskip

The $\ominus$ update follows by symmetry:
\[
\mathcal T_p \ominus \mathcal T_c = (\mathcal T_p^R \oplus \mathcal T_c^R)^R,
\]
since reflection swaps left and right profiles while preserving HS number.

\medskip

We summarize these rules formally in \Cref{prop: profile update}.

\begin{proposition}
    [Profile update]\label{prop: profile update}
    Let $\mathcal T_p$ (parent) and $\mathcal T_c$ (child) have profiles
    \[
    (h_p; L_p; R_p), \qquad (h_c; L_c; R_c),
    \]
    and define
    \[
    k^\star := \min\{k \ge h_c : (a_k^{(R_p)}, b_k^{(R_p)}) \neq (1,1)\}.
    \]
    
    Then $\mathcal T_p \oplus \mathcal T_c$ has profile $(h; L; R)$ as follows.
    
    \medskip
    
    \noindent\textbf{\textnormal{(i)} HS number.}
    \[
    h =
    \begin{cases}
    h_c, & h_c > h_p,\\
    h_p + 1, & h_c \le h_p \text{ and } k^\star = h_p + 1,\\
    h_p, & \text{otherwise}.
    \end{cases}
    \]
    
    \medskip
    
    \noindent\textbf{\textnormal{(ii)} Right profile.}
    
    If $h_c > h_p$, then
    \[
    (a_k^{(R)}, b_k^{(R)}) = (a_k^{(R_c)}, b_k^{(R_c)}) \quad (k \le h_c),
    \qquad m^{(R)} = 1.
    \]
    
    If $h_c \le h_p$, then
    \[
    (a_k^{(R)}, b_k^{(R)}) =
    \begin{cases}
    (a_k^{(R_c)}, b_k^{(R_c)}), & k \le h_c,\\
    (1,0), & h_c < k \le k^\star,\\
    (a_k^{(R_p)}, b_k^{(R_p)}), & k^\star < k \le h_p,
    \end{cases}
    \]
    and
    \[
    m^{(R)} =
    \begin{cases}
    1, & k^\star = h_p,\\
    m^{(R_p)}, & \text{otherwise}.
    \end{cases}
    \]
    
    \medskip
    
    \noindent\textbf{\textnormal{(iii)} Left profile.}
    
    For all $k < h_p$,
    \[
    (a_k^{(L)}, b_k^{(L)}) = (a_k^{(L_p)}, b_k^{(L_p)}).
    \]
    
    At the top level:
    
    \begin{itemize}
        \item \textbf{If $h_c > h_p$:}
        \[
        (a_{h_c}^{(L)}, b_{h_c}^{(L)}) = (1,1), \qquad m^{(L)} = 0,
        \]
        and
        \[
        (a_{h_p}^{(L)}, b_{h_p}^{(L)}) =
        \begin{cases}
        (a_{h_p}^{(L_p)}, b_{h_p}^{(L_p)}), & m^{(L_p)} = 1,\\
        (0,0), & m^{(L_p)} = 0.
        \end{cases}
        \]
    
        \item \textbf{If $h_c \le h_p$ and $k^\star = h_p+1$:}
        \[
        (a_{h_p+1}^{(L)}, b_{h_p+1}^{(L)}) = (1,\, m^{(R)}), \qquad m^{(L)} = 0,
        \]
        and
        \[
        (a_{h_p}^{(L)}, b_{h_p}^{(L)}) =
        \begin{cases}
        (a_{h_p}^{(L_p)}, b_{h_p}^{(L_p)}), & m^{(L_p)} = 1,\\
        (0,0), & m^{(L_p)} = 0.
        \end{cases}
        \]
    
        \item \textbf{If $h_c \le h_p$ and $k^\star = h_p$:}
        \[
        (a_{h_p}^{(L)}, b_{h_p}^{(L)}) = (1,1), \qquad m^{(L)} = 0.
        \]
    
        \item \textbf{If $h_c \le h_p$ and $k^\star < h_p$:}
        \[
        (a_{h_p}^{(L)}, b_{h_p}^{(L)}) = (a_{h_p}^{(L_p)}, b_{h_p}^{(L_p)}),
        \qquad m^{(L)} = m^{(L_p)}.
        \]
    \end{itemize}
    
    \medskip
    
    \noindent The profile for $\mathcal T_p \ominus \mathcal T_c$ is obtained via
    \[
    \mathcal T_p \ominus \mathcal T_c = (\mathcal T_p^R \oplus \mathcal T_c^R)^R.
    \]
\end{proposition}

\begin{example}
    We can see the profile of the perfect binary tree in \Cref{fig:gluing-perfect-binary} $$\textnormal{\texttt{profile}}(\mathcal T_1 \oplus \mathcal T_2) = \left(3; \left(0; \begin{bmatrix}
        1&1&1&1\\0&0&0&0
    \end{bmatrix}\right); \left(0; \begin{bmatrix}
        1&1&1&1\\0&0&0&0
    \end{bmatrix}\right)\right)$$ formed using 
    \begin{align*}
        \textnormal{\texttt{profile}}(\mathcal T_1) &= \left(2; \left(1,\begin{bmatrix}
        1&1&1\\0&0&0
    \end{bmatrix}\right); \left(0, \begin{bmatrix}
        0&1&1\\0&1&1
    \end{bmatrix}\right)\right), \\ 
    \textnormal{\texttt{profile}}(\mathcal T_2) &= \left(1; \left(0,\begin{bmatrix}
        1&1\\0&0
    \end{bmatrix}\right); \left(0, \begin{bmatrix}
        1&1\\0&0
    \end{bmatrix}\right)\right).
    \end{align*}
    since the top-right edge of $\mathcal T_1$ has ``escape'' paths number for HS numbers 1 and 2, so that the $\oplus$ merge with $\mathcal T_2$ that introduces a new right child with HS number 1 then initiates a cascading increment to the root.
\end{example}

We will revisit this explicit profile merging to build out recursive functions to compute the HS numbers for butterfly trees in \Cref{sec: butterfly}. \Cref{prop: profile update} is used to build efficient recursive functions to sample HS numbers of random butterfly trees (see \Cref{alg:hs_profile_corrected,alg:HS number_run_length}).

\medskip

We next consider random block trees $\mathcal T_1 \oplus \mathcal T_2$ and $\mathcal T_1 \ominus \mathcal T_2$, where $\mathcal T_1, \mathcal T_2$ are Catalan trees with $m$ nodes. Since there are $C_m$ choices for each parent or child tree in the block model, and two merging operators $\oplus/\ominus$, this class consists of exactly $2 C_m^2$ binary trees inside the class of $C_{2m}$ binary trees with $2m$ nodes. This is an asymptotically smaller subclass: by Stirling's approximation,
\begin{equation}\nonumber
\frac{2C_m^2}{C_{2m}} \sim  4 \sqrt{\frac2\pi} \cdot \frac1{m^{3/2}} = o_m(1).
\end{equation}
%Hence, under a uniform model on merged trees, the induced distribution differs from that of EBTs with $2m$ nodes.

Our next goal for \Cref{thm:block} is to determine the support of the HS number for such models, which we show matches that of standard Catalan trees with $2m$ nodes. This follows immediately from \Cref{prop: merge increment}:

\begin{corollary}\label{prop: block support}
Let $\mathcal T_1,\mathcal T_2$ be binary trees each with $m$ nodes. Then 
\begin{equation}
	\supp(\hs(\mathcal T_1 \oplus \mathcal T_2)) =\supp(\hs(\mathcal T_1 \ominus \mathcal T_2))  = \{0,1,\ldots,\floor{\log_2(2m+1)}\}.
\end{equation}
\end{corollary}

\begin{proof}
The right-hand side is the full support of the HS number for binary trees with $2m$ nodes. Thus it suffices to show each value in this range can be achieved by a composite tree $\mathcal T_1 \oplus \mathcal T_2$.

For any $t \in \{0,1,\ldots,\floor{\log_2(m+1)}\}$, which is the full support of the HS number for binary trees with $m$ nodes, let $\mathcal T_2$ satisfy $\hs(\mathcal T_2)=t$. If $t=0$, take $\mathcal T_1=\mathcal T_2$ to be a path, so that $\mathcal T_1 \oplus \mathcal T_2$ is again a path and has HS number $0$. For $t>0$, let $\mathcal T_1$ be this same path; by \Cref{prop: merge increment},
\[
\hs(\mathcal T_1 \oplus \mathcal T_2) = \hs(\mathcal T_2) = t.
\]

Next note
\begin{equation}
\lceil \log_2(m+1)\rceil = \floor{\log_2(2m+1)}.
\end{equation}
Indeed, if $j = \lceil \log_2(m + 1)\rceil$, then $2^{j-1} < m + 1 \le 2^j$. Multiplying by 2 gives $2^j < 2m + 2 \le 2^{j+1}$, hence $2^j \le 2m+1 < 2^{j+1}$, so $j = \floor{\log_2(2m+1)}$.

If $m = 2^k - 1$ for a positive integer $k$, then $\floor{\log_2(m+1)} = k = \lceil \log_2 (m+1)\rceil = \floor{\log_2(2m+1)}$. If $2^k-1 < m \le 2^{k+1} - 2$, then $\floor{\log_2(m+1)} = k$ is the maximal HS number for a binary tree with $m$ nodes. Let $\mathcal T_1$ be the tree formed by a perfect binary tree of height $k$ with an up-right path of length $m - (2^k-1) \ge 1$ extending upward from the root of the embedded perfect tree. Each node on this path has HS number $k$. Taking $\mathcal T_1=\mathcal T_2$, the tree $\mathcal T_1 \oplus \mathcal T_2$ satisfies the increment rules in \Cref{prop: merge increment}: the root has a maximal HS path that escapes the top gluing edge since the top left edge of $\mathcal T_1$ contains a maximal HS path from the root to the embedded perfect tree. It follows
\[
\hs(\mathcal T_1 \oplus \mathcal T_2)
= \max(\hs(\mathcal T_1),\hs(\mathcal T_2)) + 1
= k + 1
= \lceil \log_2(m+1)\rceil
= \floor{\log_2(2m+1)}.
\]
\end{proof}

Although the support of the HS number for merged binary trees matches that of binary trees with $2m$ nodes, the gluing operator limits the ability of composites to attain larger HS number since any perfect binary tree using nodes from both components must use the gluing edge. Future work may determine exact probabilities for the HS number of merged trees. This is tractable via adaptations of Flajolet’s generating function approach, potentially yielding a full distributional description of the register function \cite{flajolet1977}.

Moreover, a direct consequence also of \Cref{prop: merge increment} for uniform composite Catalan trees is a WLLN result:
\begin{proposition}[WLLN]\label{prop:thm1b}
    Let $\mathcal T_1,\mathcal T_2$ be uniform Catalan trees with $m$ nodes. Then
        \[
    \frac{\hs(\mathcal T_1 \oplus \mathcal T_2)}{\log_2 (2m)} \xrightarrow[m \to \infty]{\mathbb P} \frac12.
    \]
\end{proposition}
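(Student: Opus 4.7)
The plan is to exploit the deterministic sandwich from \Cref{eq: merge ineq}, which lets us write
\[
\hs(\mathcal T_1 \oplus \mathcal T_2) = \max\big(\hs(\mathcal T_1),\hs(\mathcal T_2)\big) + f(\mathcal T_1,\mathcal T_2),
\]
where the error $f(\mathcal T_1,\mathcal T_2) \in \{0,1\}$ almost surely. This reduces the asymptotic analysis to two independent copies of the one-tree WLLN from \Cref{eq: WLLN EBT} together with a negligible bounded correction, so the argument is essentially a continuous mapping / Slutsky exercise on top of the established EBT asymptotics.

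First I would invoke the EBT WLLN separately for each factor: since $\mathcal T_1$ and $\mathcal T_2$ are independent EBTs with $m$ nodes, each of $\hs(\mathcal T_i)/\log_4 m$ converges in probability to $1$. Because the map $(x,y)\mapsto \max(x,y)$ is continuous, the \emph{Continuous Mapping Theorem} gives
\[
\max\!\left(\frac{\hs(\mathcal T_1)}{\log_4 m},\frac{\hs(\mathcal T_2)}{\log_4 m}\right) \;\xrightarrow[m\to\infty]{P}\; \max(1,1) = 1,
\]
and using $\max(a/c,b/c)=\max(a,b)/c$ for $c>0$, this is the same as $\max(\hs(\mathcal T_1),\hs(\mathcal T_2))/\log_4 m \xrightarrow{P} 1$. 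Next, since $0 \le f(\mathcal T_1,\mathcal T_2) \le 1$ deterministically, we trivially have $f(\mathcal T_1,\mathcal T_2)/\log_4 m \xrightarrow{P} 0$, and \emph{Slutsky's theorem} combines these to give $\hs(\mathcal T_1\oplus\mathcal T_2)/\log_4 m \xrightarrow{P} 1$.

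The last step is the change of normalization from $\log_4 m$ to $\log_4(2m)$. Writing $\log_4(2m) = \log_4 m + \tfrac12$, we get the deterministic identity $\log_4 m / \log_4(2m) = 1 + o_m(1)$, and multiplying through by this deterministic factor (another Slutsky application) yields the stated WLLN. I do not foresee a serious obstacle: the merge inequality does the heavy lifting by trapping the joint HS within one unit of the maximum of the two marginals, and the rest is a routine combination of CMT and Slutsky with the EBT WLLN \Cref{eq: WLLN EBT} imported as a black box. The only point that warrants a sentence of care is ensuring that the bound $f \in \{0,1\}$ is genuinely almost sure (not merely in expectation), which is exactly the content of \Cref{eq: merge ineq} and \Cref{rmk: gluing max}.
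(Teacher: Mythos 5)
Your proposal is correct and follows the paper's argument essentially verbatim: the decomposition $\hs(\mathcal T_1 \oplus \mathcal T_2) = \max(\hs(\mathcal T_1),\hs(\mathcal T_2)) + f(\mathcal T_1,\mathcal T_2)$ with $f \in \{0,1\}$ from \Cref{eq: merge ineq}, the Continuous Mapping Theorem applied to the max, and Slutsky's theorem to absorb both the bounded correction and the change of normalization $\log_4 m \to \log_4(2m)$. No gaps.
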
 
\begin{proof}
Note first
\[
\log_2(2m) = \log_2 m + 1 = \log_2 m \cdot (1 + o_m(1)),
\]
so that by the WLLN for Catalan trees with $m$ nodes and the Continuous mapping theorem, we have
\[
\frac{\max(\hs(\mathcal T_1), \hs(\mathcal T_2))}{\log_2(2m)} = \max\left(\frac{\hs(\mathcal T_1)}{\log_2 m}, \frac{\hs(\mathcal T_1)}{\log_2 m} \right) \cdot \frac{\log_2 m}{\log_2(2m)} \xrightarrow[m\to\infty]{\mathbb P} \max\left(\frac12,\frac12\right) \cdot 1 = \frac12.
\]
Since $I := \hs(\mathcal T_1 \oplus \mathcal T_2) - \max(\hs(\mathcal T_1),\hs(\mathcal T_2)) \in \{0,1\}$, then $I/\log_2(2m) \xrightarrow[m\to\infty]{} 0$ surely, so that by Slutsky's theorem, we have
\[
\frac{\hs(\mathcal T_1 \oplus \mathcal T_2)}{\log_2(2m)} = \frac{\max(\hs(\mathcal T_1), \hs(\mathcal T_2))}{\log_2(2m)} + \frac{I}{\log_2(2m)} \xrightarrow[m \to \infty]{\mathbb P} \frac12.
\]
\end{proof}

We now have

\begin{proof}[Proof of \Cref{thm:block}]
    Combine \Cref{prop: block support} and \Cref{prop:thm1b}.
\end{proof}

So although one gluing operator sufficed to increase the height of the composite tree, it did not change the first order scaling for the HS number. That said, it does hinder the ability of such merged trees to have larger HS number, as increasing HS number from the input trees must include the one gluing edge. We will spend the remainder of the paper on the natural question: 
\begin{quote}
    \textit{What happens to the HS number for binary trees formed using only these gluing operations? }
\end{quote}

This shifts the focus now to butterfly trees. 

\section{Butterfly trees}\label{sec: butterfly}

Butterfly trees are trees with $N = 2^n$ nodes built recursively using only $\oplus$ or $\ominus$ merging operators on two level $n-1$ butterfly trees, while simple butterfly trees are formed by taking identical simple butterfly trees at each previous level. Equivalently, butterfly trees are BSTs formed using butterfly permutations $\B_n$, with simple butterfly trees corresponding to BSTs built using simple butterfly permutations $\B_{n,s}$. By a straightforward induction, each permutation determines a unique butterfly tree shape. Hence, butterfly trees bridge permutation-weighted BSTs and Catalan trees. Moreover, it follows there are precisely $|\B_n| = 2^{N-1}$ butterfly trees and $|\B_{n,s}| = N$ simple butterfly trees. See \Cref{fig:Simple butterfly trees} for example simple butterfly trees, which we further note can fill out a rectangular lattice, while \Cref{fig: BST HS number keys} displays a general butterfly tree.

\begin{figure}[t]
    \centering
    \begin{subfigure}{0.32\textwidth}
    \includegraphics[width=\linewidth]{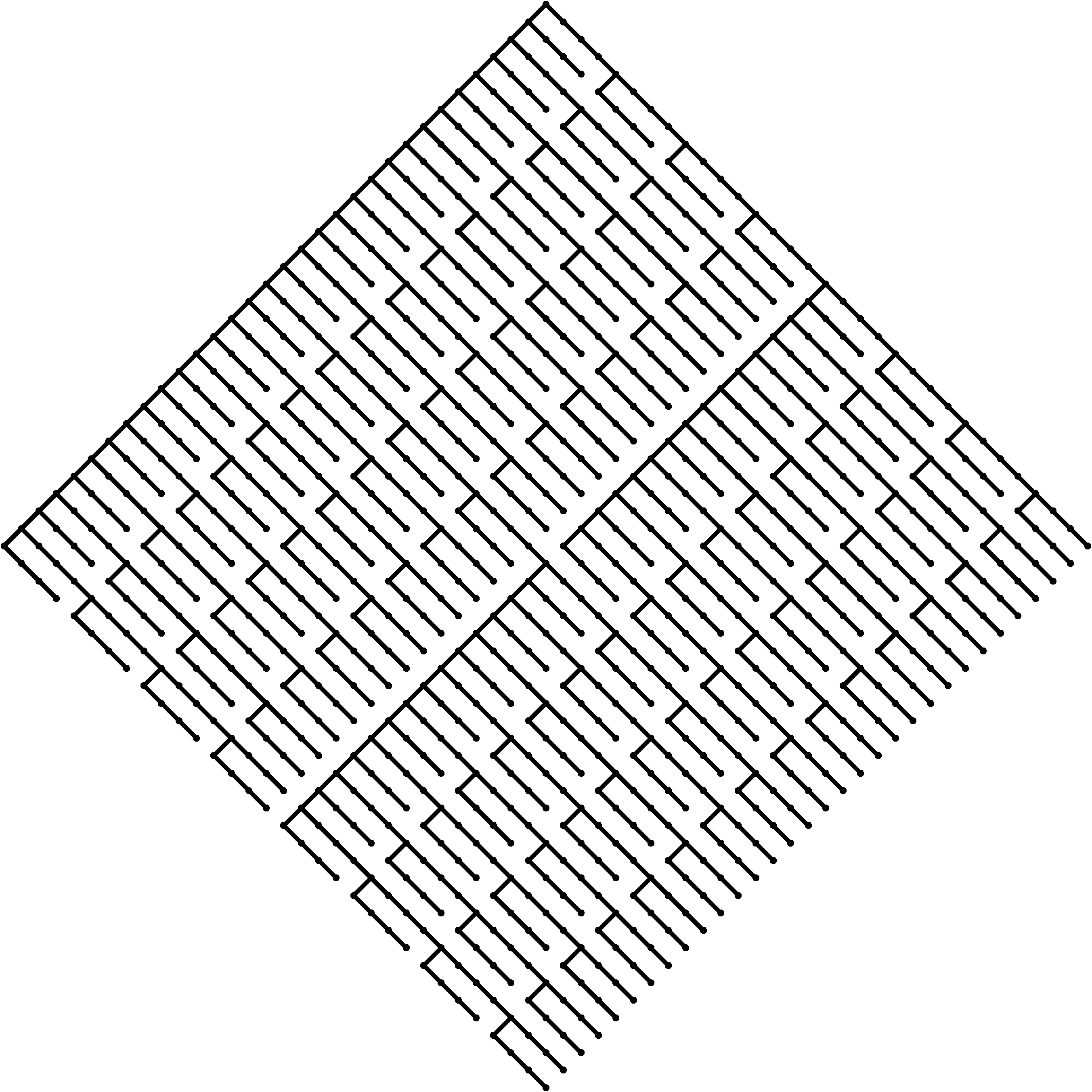}
    %\caption{$\mathcal T_s^{\B}(0010011101)$}
    \end{subfigure}
    \begin{subfigure}{0.32\textwidth}
    \includegraphics[width=\linewidth]{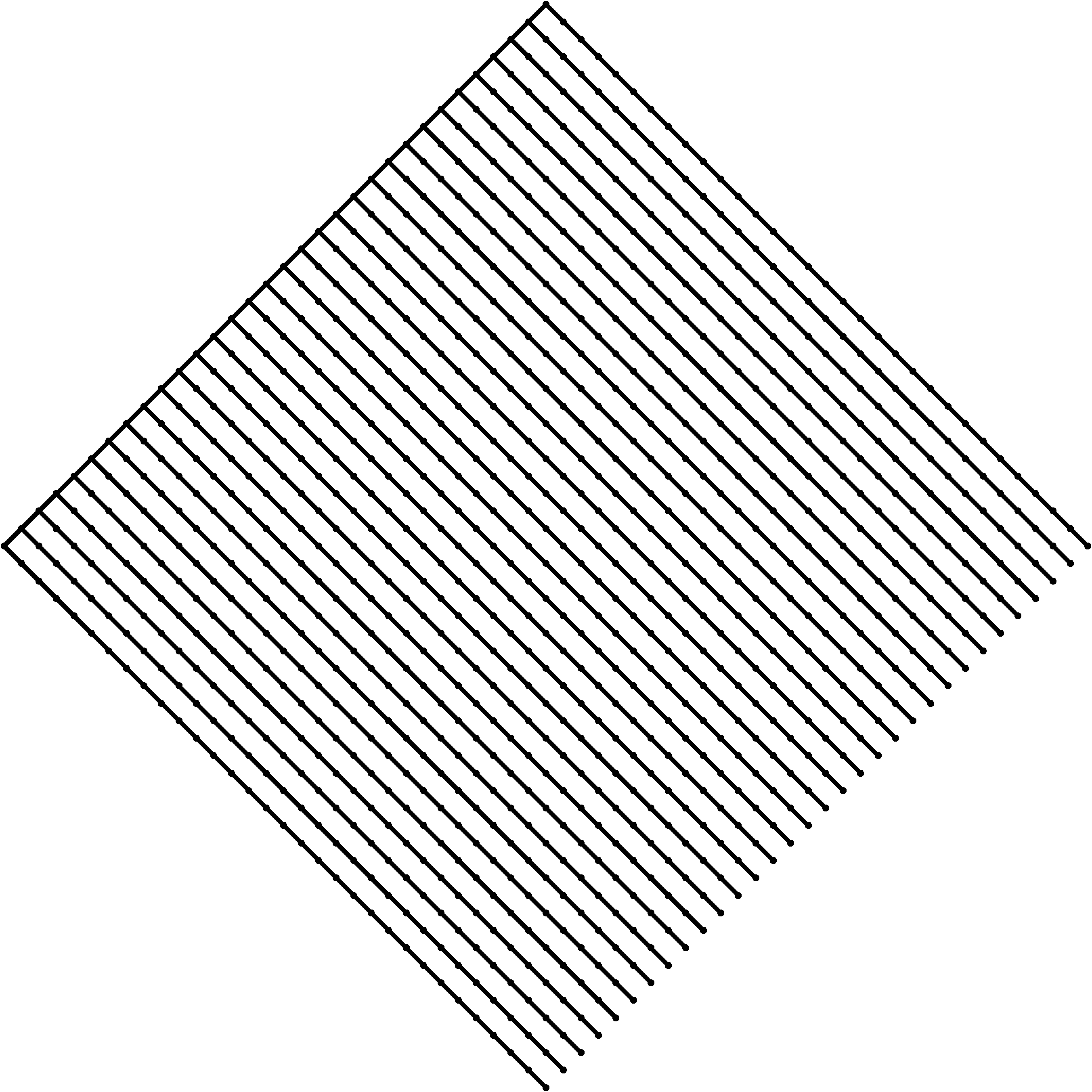}
    %\caption{$\mathcal T_s^{\B}(0000011111)$}
    \end{subfigure}
    \begin{subfigure}{0.32\textwidth}
    \includegraphics[width=\linewidth]{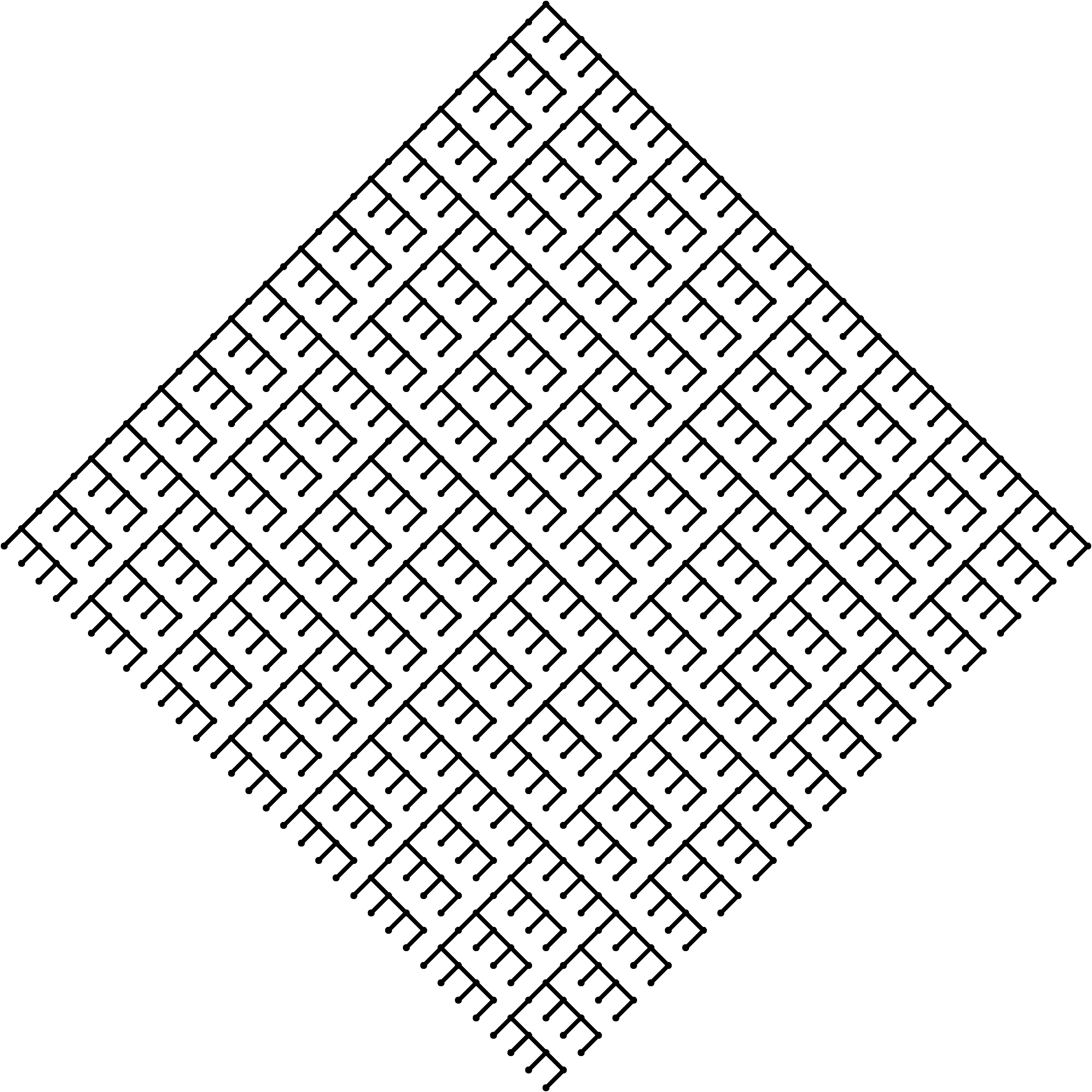}
    %\caption{$\mathcal T_s^{\B}(1001100101)$}
    \end{subfigure}
    \caption{Simple butterfly trees with minimal height, $N = 1{,}024$ nodes}
    \label{fig:Simple butterfly trees}
\end{figure}

In \cite{PZ25}, we introduced butterfly trees and studied their height, showing it is order $N^{.585}$ on average (compared to $N^{1/2}$ for Catalan trees). The height and HS number are complementary statistics for binary trees: height is minimized by perfect binary trees and maximized by the identity permutation tree, whereas the HS number reverses these extremal relations. So while the height of butterfly trees increased over Catalan trees, we want to study and compare the complementary question for the HS number of butterfly trees.

We first consider the HS number support of butterfly trees. With a single gluing operation, the support remains unchanged (see Theorem \ref{thm:block}), but the block construction already limits larger HS values since perfect trees spanning both inputs must pass through the gluing edge. We now study the effect of building binary trees using only these operators. It turns out the support decreases substantially:

\begin{proposition}\label{prop: butterfly support}
    Let $\mathcal T_n^{\B}$ be a butterfly tree with $N = 2^n$ nodes. Then $$\supp(\hs(\mathcal T_n^{\B})) = \{0,1,\ldots,\floor{n/2}\}.$$
\end{proposition}

\begin{proof}
    We use induction on $n$. Note we will prove a further result that a butterfly tree with maximal HS number can only have a multiplicity of nodes with this maximal HS number if $4 \nmid N$. This is trivial for $n  \le 1$ since $\hs(\mathcal T_n^{\B}) = 0 = \floor{n/2}$, noting $\mathcal T_1^{\B}$ is a path and so both nodes have HS number 0. Moreover, for $n = 2$, then $\hs(\mathcal T_2^{\B}) = 0$ if $\mathcal T_2^{\B} = \mathcal T(\pi)$ for $\pi \in \B_2 = \{1234,4321,1243,4312\}$ and $\hs(\mathcal T_2^{\B}) = 1 = \floor{n/2}$ for $\pi \in \{2134,2143,3421,3412\}$, and this maximal HS number is achieved only by the root node within the entire tree.
    
    Now assume the result holds for $n \ge 2$. It suffices to then consider $\mathcal T_{n+1}^{\B} = \mathcal T_1^{\B} \oplus \mathcal T_2^{\B}$ where $\mathcal T_1^{\B}$ has maximal HS number, i.e., $\hs(\mathcal T_1^{\B}) = \floor{n/2}$. We will consider cases on the parity of $n$.\medskip
    
    \textbf{Case 1:} Suppose $n$ is even. Writing $n = 2k$, then $\floor{n/2} = k$. Since $\hs(\mathcal T_1^{\B}) = k$ and the previous level $n-1$ could have maximal HS number $\floor{n/4} = \floor{k - 1/2} = k-1$, then necessarily the root of $\mathcal T_1^{\B}$ is the only node in $\mathcal T_n^{\B}$ with HS number $k$. Since then gluing $ \mathcal T_2^{\B}$ to the end of a top edge of $\mathcal T_1^{\B}$ can at most extend a path from the root of $\mathcal T_2^{\B}$ to the composite root, which corresponds the root of $\mathcal T_1^{\B}$, and this path cannot then meet a path connecting nodes of HS number $k$ from the opposite top edge from the gluing (since only the previous root had this maximal HS number), then it follows $\hs(\mathcal T_1^{\B} \oplus \mathcal T_2^{\B}) = k = \floor{k + 1/2} = \floor{(n+1)/2}$. (Note then the composite would then have a multiplicity of nodes with max HS number $k$ along the gluing edge if the child butterfly tree was chosen to also have a maximal HS number.) \medskip
    
    \textbf{Case 2:} Now suppose $n = 2k+1$ is odd. A similar construction can now consider $\mathcal T_1^{\B}$ that has a multiplicity of nodes with HS number $\floor{n/2} = \floor{k + \frac12} = k$. We can then choose the gluing operation so that the gluing occurs on the opposite side of the parent edge with at least one additional node with HS number $k$, and if we further choose the child butterfly tree to have HS number $k$, then the merging rules then create a path from the child root to the composite root connecting nodes with HS number $k$.  The merged tree thus meets the increment merging sufficiency rules in \Cref{prop: merge increment}, and so the composite tree has HS number $k + 1 = \floor{(n+1)/2}$. \medskip
    
    This completes the inductive step.
\end{proof}

\begin{remark}
While Catalan trees have HS number that concentrates sharply at $\frac12 \log_2 N = n/2$, this value comprises a deterministic upper bound for the HS number of butterfly trees. This suggests the average HS number is smaller than for Catalan trees, so butterfly trees have higher parallelization efficiency.
\end{remark}

To model the HS number for butterfly trees, we use \Cref{prop: merge increment}, which controls when an HS increment occurs from $\max(\mathcal T_n^{\B},\widetilde{\mathcal T}_n^{\B})$ under a single merging operator $\oplus/\ominus$. This requires tracking the HS profiles along the top edges of each butterfly tree to determine when increment conditions are satisfied. This yields a recursive form
\begin{equation}\label{eq: rde}
    \hs(\mathcal T_{n+1}^{\B}) = \max(\hs(\mathcal T_n^{\B}), \hs(\widetilde{\mathcal T}_n^{\B})) + I_n,
\end{equation}
where $I_n = I(\mathcal T_n^{\B},\widetilde{\mathcal T}_n^{\B},t)$ is the indicator that the increment conditions from \Cref{prop: merge increment} are satisfied for $\mathcal T_n^{\B}, \widetilde{\mathcal T}_n^{\B}$, which then build $\mathcal T_{n+1}^{\B}$ using $\oplus$ if $t = 0$ or $\ominus$ if $t = 1$. Equivalently, by \Cref{prop: profile update}, $I_n$ indicates whether the merged profiles induce a cascading increment to the root of the composite tree. As butterfly trees are built using only merging operators $\oplus$ and $\ominus$, we can then form a recursive function that returns the full HS profile of a butterfly tree given an input $N-1$ bitstring $\V x$ that encodes the sequence of $\oplus/\ominus$ merging operators, where writing $\V x = (\V y,\V z, t) \in \{0,1\}^{N-1}$ for $\V y,\V z \in \{0,1\}^{N/2-1}$ and bit $t$
\[
\mathcal T^{\B}(\V x) = \begin{cases}
    \mathcal T^{\B}(\V y) \oplus \mathcal T^{\B}(\V z), & t = 0,\\ \vspace{-.5pc} \\ 
    \mathcal T^{\B}(\V y) \ominus \mathcal T^{\B}(\V z), & t = 1.
\end{cases}. 
\]
This is outlined explicitly in \Cref{alg:hs_profile_corrected} in \Cref{sec: algorithms}. 

\begin{remark}[Recursive tree process (RTP)]\label{rmk: RTP}
    An equivalent probabilistic interpretation is obtained by viewing butterfly trees as functionals of an infinite rooted binary tree. Consider an infinite binary tree in which each node is assigned an independent $\mathrm{Bern}(1/2)$ label. For each depth $n$, we restrict to the finite subtree consisting of the first $n$ levels. The $(N-1)$-bit string $\mathbf{x}$ encoding a level-$n$ butterfly tree can then be viewed as the collection of labels along this truncated tree, read in the order induced by the recursive merging construction.
\end{remark}

\begin{remark}
We implement \Cref{alg:hs_profile_corrected} in MATLAB, which computes the HS number in $\mathcal{O}(N)$ time by operating directly on the edge-profile representation, without constructing the underlying tree. This is asymptotically optimal in $N$ for this model.

For comparison, a na\"ive implementation first constructs the butterfly permutation from the input bitstring, then builds the associated binary search tree, and finally computes the HS number via a standard postorder traversal. While both permutation generation and HS evaluation are $\mathcal{O}(N)$, the BST construction incurs an additional cost proportional to the insertion depth. For butterfly trees it is known that
\[
\mathbb{E}[\mathrm{height}(\mathcal{T}_n^{\B})] \ge 2(3/2)^n - 2 \approx \Omega(N^{0.585}),
\]
see~\cite{PZ25}. Thus the na\"ive pipeline has expected runtime at least of order $N \cdot N^{0.585} = N^{1.585}$ when BST construction dominates.

In contrast, \Cref{alg:hs_profile_corrected} avoids explicit tree construction and eliminates this superlinear overhead. In initial experiments, the na\"ive method required approximately 2.7 hours to compute 10 HS values for random butterfly trees with $N = 2^{18}$. Using the profile-based algorithm, we compute 500 HS values for the same size in approximately 6.5 minutes, and extend computations up to $n = 25$ (see \Cref{tab: summary statistics}).
\end{remark}

\begin{remark}\label{rmk: reflection invariance}
Using the input merge bitstring $\mathbf{x}$, invariance of the HS number under reflection implies
\[
\hs(\mathcal T^{\B}(\mathbf{x})) = \hs(\mathcal T^{\B}(\mathbf{1}-\mathbf{x})),
\]
since $\mathcal T^{\B}(\mathbf{x})^R = \mathcal T^{\B}(\mathbf{1}-\mathbf{x})$. Here reflection swaps each $\oplus$ and $\ominus$ merging operator, which corresponds exactly to the transformation $\mathbf{x} \mapsto \mathbf{1}-\mathbf{x}$.
\end{remark}

The appearance of the maximum in \Cref{eq: rde} produces a max-type recursion for the HS number of random butterfly trees. While vast literature exist for max-type recursive distributional equations (see \cite{rde_aldous} for a survey), tools for exact analysis often require stronger additional conditions, such as a smoothing transform if the additive factor $I_n$ were an independent noise term, which is not the case here. The intricate dependence on the local geometry of each input tree so obstructs this direction, which requires the full HS profile of each input tree, and forms an unbounded profile state space that grows with $n$. As such, the first moment no longer has a clear scaling form. We will revisit this direction for general butterfly trees in \Cref{sec: nonsimple}, where we approach the WLLN in \Cref{conj: wlln butterfly}.

In contrast, this maximum is no longer present for simple butterfly trees, built from identical copies at each level, the dynamics simplify substantially. The maximum disappears since it is evaluated on identical inputs, and by \Cref{prop: profile update}, the gluing edge only depends on the top-level profile since $h_c = h_p$. Thus the cascade condition reduces to tracking only the multiplicity indicators along the top edges together with the HS number of the input tree.

This allows a full distributional description, which we now outline.

\subsection{Simple butterfly trees}\label{sec: simple}

Simple butterfly trees are formed using gluing operations on \emph{identical} copies of the previous level tree, producing a fractal structure. In particular, simple butterfly trees fill out a rectangular lattice, where the height is achieved by the node at the bottom corner of this lattice \cite{PZ24} (see \Cref{fig:Simple butterfly trees}). Our previous work established a full distributional description for the height of uniformly random simple butterfly trees:
\begin{equation}\label{eq: heights}
    \texttt{height}(\mathcal T_n^{\B}) \stackrel{d}{=}2^{Y_n} + 2^{n-Y_n} - 2
\end{equation}
for $Y_n \sim \operatorname{Binom}(n,1/2)$. Other statistics follow directly from this lattice structure, including maximal width and width profiles at fixed depth. We now extend these ideas to the HS number.

A key tool is a compressed butterfly permutation representation of the simple butterfly permutation $\pi_n \in \B_{n,s}$. Since
\[
\pi_n = \bigotimes_{j = 1}^n (1\ 2)^{x_j}, \quad x_j \in \{0,1\},
\]
we can encode $\pi_n$ by the bitstring $\mathbf{x} \in \{0,1\}^n$. This representation is sufficient to construct the full permutation while allowing certain statistics to be computed directly from the bitstring itself, without first constructing $\pi_n$. 

\begin{remark}
    The $(N-1)$-bitstring $\mathbf{x}'$ encoding the merging operators for a general butterfly tree can be reduced to an $n$-bitstring $\mathbf{x}$ in the simple butterfly setting. To show this, writing $\mathbf{x}' = (\mathbf{y}', \mathbf{z}', t)$ recursively, we have $\mathbf{y}' = \mathbf{z}'$ at each step, so only the final merging bit needs to be retained. This yields the condensed representation $\mathbf{x}$ with
    \[
    x_j = x'_{2^j - 1}.
    \]
\end{remark}

\begin{figure}[t]
    \centering
    \begin{subfigure}{0.23\textwidth}
    \includegraphics[width=\linewidth]{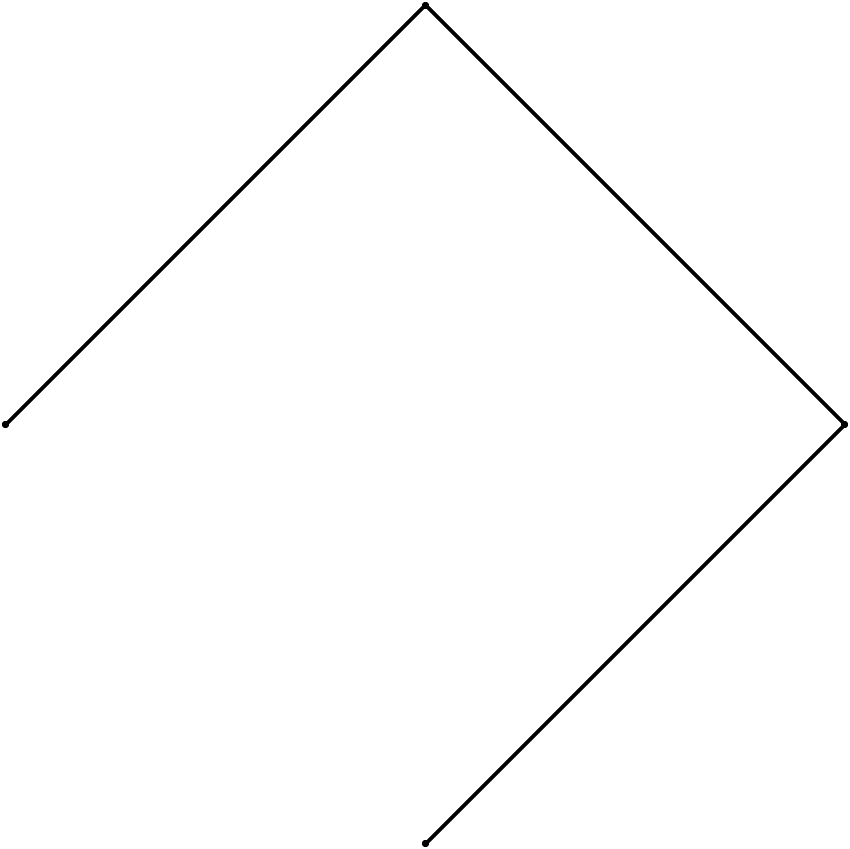}
    %\caption{$\mathcal T_s^{\B}(0010011101)$}
    \end{subfigure}
    \begin{subfigure}{0.23\textwidth}
    \includegraphics[width=\linewidth]{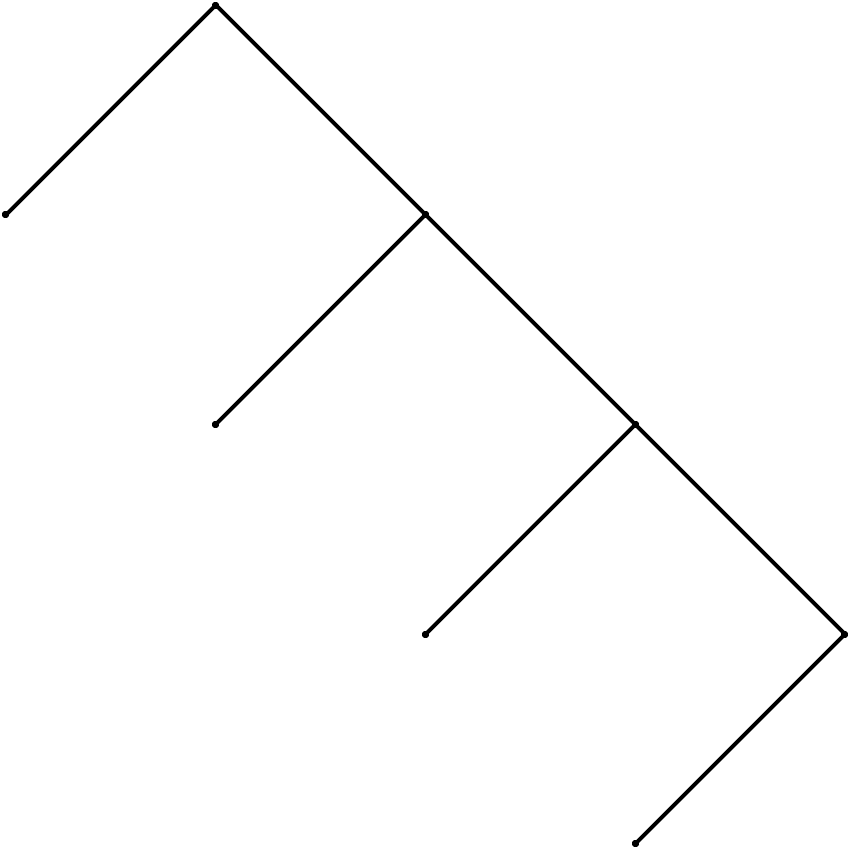}
    %\caption{$\mathcal T_s^{\B}(0000011111)$}
    \end{subfigure}
    \begin{subfigure}{0.23\textwidth}
    \includegraphics[width=\linewidth]{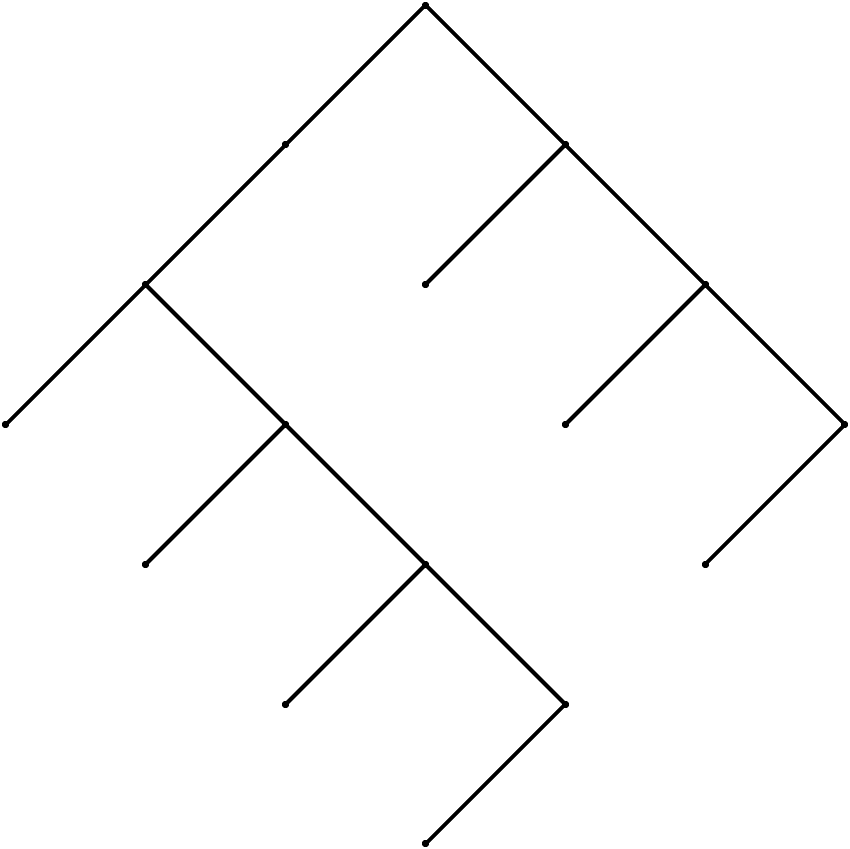}
    %\caption{$\mathcal T_s^{\B}(1001100101)$}
    \end{subfigure}
    \begin{subfigure}{0.23\textwidth}
    \includegraphics[width=\linewidth]{images/MaxHS_bst_1001100101_6.png}
    %\caption{$\mathcal T_s^{\B}(1001100101)$}
    \end{subfigure}
    \caption{Intermediate simple butterfly trees $\mathcal T^{\B}(10), \mathcal T^{\B}(100), \mathcal T^{\B}(1001)$ in building $\mathcal T^{\B}(1001100101)$}
    \label{fig:Simple butterfly trees intermediate}
\end{figure}

\begin{remark}[RTP: Simple butterfly trees]
    In the probabilistic interpretation of \Cref{rmk: RTP}, butterfly trees can be viewed as functionals of a rooted infinite binary tree equipped with Bernoulli labels.
    
    In the general butterfly tree setting, each node carries an independent label, so the level-$n$ construction depends on $N-1$ independent Bernoulli variables, corresponding to the full $(N-1)$-bit encoding.
    
    In the simple butterfly tree setting, this structure simplifies to a level-indexed model. Thus only one random bit is introduced per level, and the level-$n$ construction is determined by an $n$-bit string rather than an $(N-1)$-bit string.
    
    Equivalently, the simple butterfly tree is obtained by restricting the labeled infinite binary tree to depth $n$, where randomness enters only through one Bernoulli variable per level, shared across all nodes at that depth.
\end{remark}

The simple butterfly tree heights are a function of $Y_n  = \sum_{j=1}^n x_j$. If $x_j \sim \operatorname{Bern}(p)$ iid, then $Y_n \sim \Bin(n,p)$, as seen in \eqref{eq: heights} for $p = 1/2$. This similarly derives the \textit{lengths of the longest increasing sequence (LIS)} and \textit{longest decreasing sequence (LDS)} \cite{PZ24}, where $\operatorname{LIS}(\pi_n) = 2^{n - Y_n}$ and $\operatorname{LDS}(\pi_n) = 2^{Y_n}$. In the corresponding BST, the LIS and LDS are each realized as the sequence of BST labels starting at the root along the the top-right and top-left edges.

In particular, we note the construction of the simple butterfly trees themselves are direct functions of the compressed butterfly permutation $n$-bitstrings $\V x$, which we can write as $\mathcal T^{\B}(\V x)$, where each bit indicates whether to glue two identical previous levels together by extending the top right (0) or top left (1) edge of the parent copy. Figure \ref{fig:Simple butterfly trees intermediate} shows three intermediate forms of simple butterfly trees toward the right figure in Figure \ref{fig:Simple butterfly trees}, $\mathcal T^{\B}(1001100101)$, which includes other simple butterfly trees $\mathcal T^{\B}(0010011101)$ and  $\mathcal T^{\B}(0000011111)$.

\begin{remark}
    Similarly, general butterfly permutations of length $N = 2^n$ can be encoded using a compressed butterfly permutation now as $(N-1)$-bitstring. This aligns with an encoding from a complementary full binary rooted tree, with components corresponding to the local block rotations at each coordinate within the tree. This idea was implicitly important in the techniques in \cite{Abert_Virag_2005} by Ab\'ert and Vir\'ag, who resolve a long standing open question from Tur\'an on the order of a random element from a $p$-Sylow subgroup of $\operatorname{Sym}(p^n)$ using this correspondence. (Recall the butterfly permutations comprise a 2-Sylow subgroup of separable permutations of $\operatorname{Sym}(2^n)$.) 
    
    Since non-identical butterfly trees can be used to build the next level tree, we note general butterfly trees result in non-rectangular lattices: see Figure \ref{fig: BST HS number keys} for the nonsimple butterfly tree with $2^5=32$ nodes, $\mathcal T^{\B}(1011111001100101010010010110100)$.
\end{remark}

Using the compressed bitstring for simple butterfly trees, we can construct significantly more efficient means to construct the HS number that for general butterfly trees (see \Cref{alg:hs_profile_corrected}). 

% \begin{figure}[t]
%     \centering
%     \includegraphics[width=1\linewidth]{images/Nonsimple_butterfly_tree_1011000010010110001011110101010.png}
%     \caption{Nonsimple butterfly tree, $\mathcal T^{\B}(1011000010010110001011110101010)$, with HS number labels, 32 nodes}
%     \label{fig:Nonsimple butterfly tree}
% \end{figure}
\subsubsection{Computing the HS number of simple butterfly trees}\label{sec: computing}

We can rewrite the RDE for butterfly trees now as 
\[
\hs(\mathcal T_{n+1}^{\B}) = \hs(\mathcal T_n^{\B}) + I(\mathcal T_n^{\B},\mathcal T_n^{\B},t),
\]
since $\mathcal T_p = \mathcal T_c$. It thus suffices to model the increment
\[
X_n = I(\mathcal T_{n-1}^{\B},\mathcal T_{n-1}^{\B},t) = \hs(\mathcal T_n^{\B}) - \hs(\mathcal T_{n-1}^{\B}),
\]
since then 
\[
\hs(\mathcal T_n^{\B}) = \sum_{j=1}^n X_j.
\]
We note the increment $X_n = 1$ occurs if and only if a maximal HS number escape path exists along the gluing edge. Again, since $\mathcal T_p = \mathcal T_c$ in the simple butterfly setting (with HS profiles $(h_p,L_p,R_p) = (h_c,L_c,R_c)$), so conditions (i) and (ii) of \Cref{prop: merge increment} always hold, and only the escape condition (iii) remains.

Using \Cref{prop: profile update}, since $h_c = h_p$, we have
\[
k^\star = \min\{k \ge h_c : (a_k^{(G_p)}, b_k^{(G_p)}) \ne (1,1)\} \in \{h_c, h_c+1\},
\]
where $(G,O) = (R,L)$ for $\oplus$ and $(G,O) = (L,R)$ for $\ominus$. Thus an increment occurs if and only if $b_{h_c}^{(G_p)} = 1$. By symmetry of the inputs, this holds precisely when a maximal HS path exists on the opposite top edge, i.e., when the multiplicity indicator satisfies $m^{(O_p)} = 1$. Hence the full profile reduces to tracking multiplicity on the top edges.

In this setting, an escape path exists exactly when:
\begin{itemize}
    \item $\textnormal{\texttt{xor}}(x_n,x_{n-1}) = 1$, so if a multiplicity of nodes is on a top edge then it is on the opposite edge from the current gluing operator, and
    \item no increment occurred at the previous step, so we need $X_{n-1} = 0$, since an increment produces a unique maximal node at the root and resets the multiplicity.
\end{itemize}
As such, we further note $X_j X_{j-1} = 0$ for all $j$.

This yields the following recursive representation.

\begin{proposition}[Recursive representation]\label{prop: s HS number increment}
Let $\mathcal T_n^{\B} = \mathcal T^{\B}(\mathbf{x})$ be a simple butterfly tree with $\mathbf{x} \in \{0,1\}^n$. Then
\[
\hs(\mathcal T_n^{\B}) = \sum_{j=1}^n X_j,
\]
where $X_1 = 0$ and
\[
X_j = \textnormal{\texttt{xor}}(x_j,x_{j-1}) \cdot (1 - X_{j-1}).
\]
\end{proposition}

This recursive formulation admits a direct combinatorial interpretation. Define a new sequence $\mathbf{y} \in \{0,1\}^{n-1}$ by
\[
y_j = \textnormal{\texttt{xor}}(x_{j+1},x_j).
\]
Thus $y_j = 1$ records exactly those steps at which a bit flip occurs, i.e., those indices where an increment is {eligible}. However, not every such index produces an increment. From the recursion,
\[
X_j = y_{j-1}\cdot (1 - X_{j-1}),
\]
so an increment can occur at $j$ only if $y_{j-1} = 1$ and the previous step did not already produce an increment. In particular, consecutive increments are forbidden, i.e., $X_j X_{j-1} = 0$ for all $j$.

Now consider a maximal run of $1$'s in $\mathbf{y}$ of length $k$. This corresponds to $k$ consecutive indices at which increments are eligible. Since no two consecutive indices can both contribute (as $X_j X_{j-1}=0$), the maximum number of increments that can be realized within such a run is obtained by selecting every other index. This yields exactly $\left\lceil k/2 \right\rceil$ increments from a run of length $k$.

Since distinct runs are separated by zeros in $\mathbf{y}$, the constraint resets between runs, and the contributions add independently. Hence $\hs(\mathcal T^{\B}(\mathbf{x}))$ is obtained by summing $\lceil k/2 \rceil$ over all maximal runs of $1$'s in $\mathbf{y}$.

\begin{proposition}[Run-length representation]\label{prop: runs}
If the maximal runs of $1$'s in $\mathbf{y}$ have lengths number $k_1,\dots,k_\ell$, then
\[
\hs(\mathcal T^{\B}(\mathbf{x})) = \sum_{j=1}^\ell \left\lceil \frac{k_j}{2} \right\rceil.
\]
\end{proposition}

\Cref{alg:HS number_run_length} outlines a specific implementation path for the run-length representation. A direct implementation of this method was used to construct all HS numbers of simple butterfly tree samples used throughout this paper. An immediate consequence of these representations is that the HS number can be computed in linear time in $n$.

\begin{remark}\label{rmk: runtime}
Both representations above yield $\mathcal O(n)$ algorithms for computing the HS number for simple butterfly trees $\mathcal T_n^{\B} = \mathcal T^{\B}(\mathbf{x})$ for $\V x \in \{0,1,\}^n$. The recursive form requires a single pass updating $X_j$, while the run-length representation requires forming $\mathbf{y}$ and summing over its maximal runs. This is an exponential improvement over the general butterfly tree construction using \Cref{alg:hs_profile_corrected} with $N = 2^n$ nodes, which requires $\mathcal O(N)$ operations.
\end{remark}

\begin{remark}\label{rmk: invariance}
The HS number is preserved under two natural symmetries of the input string
$\mathbf{x} \in \{0,1\}^n$, illustrated in Figure~\ref{fig:Simple butterfly
trees invariance}.
\begin{itemize}
  \item \textbf{Complement invariance.} $\hs(\mathcal T^{\B}(\mathbf{x})) = \hs(\mathcal
  T^{\B}(\mathbf{1}-\mathbf{x}))$, since complementing $\mathbf{x}$
  interchanges each $\oplus/\ominus$ merge, corresponding to reflecting
  $\mathcal T^{\B}$ across the root that preserves the HS number. This always holds for butterfly trees, as noted in \Cref{rmk: reflection invariance}.
  
  \item \textbf{Reversal invariance.} The map $\textnormal{\texttt{rev}}(\mathbf{x})\colon
  x_j \mapsto x_{n-j+1}$ preserves the multiset of run-lengths number
  $(k_1,\dots,k_\ell)$ from \Cref{prop: runs} since
  $$
  y_j = \textnormal{\texttt{xor}}(x_{j+1},x_j) = \textnormal{\texttt{xor}}(1-x_{j+1},1-x_j)
  $$
  and hence the HS number. Equivalently, reversal
  corresponds to constructing the simple butterfly tree in the opposite order. 
  
  In contrast, the HS for general butterfly trees built using an $(N-1)$-bitstring are not invariant under reversal of the input bitstring, as the input parent and child trees are distinct so a reversal can completely change the geometry of the merged tree. For example, the 8 node butterfly tree $\mathcal T^{\B}(0010000) = \mathcal T^{\B}(001) \oplus \mathcal T^{\B}(000)$ has HS number 1 while $\mathcal T^{\B}(0000100) = \mathcal T^{\B}(000) \oplus \mathcal T^{\B}(010)$ has HS number 0 (see also \Cref{fig: bst_4_nodes}).
\end{itemize}
\end{remark}

\begin{figure}[t]
    \centering
    \begin{subfigure}{0.32\textwidth}
    \includegraphics[width=\linewidth]{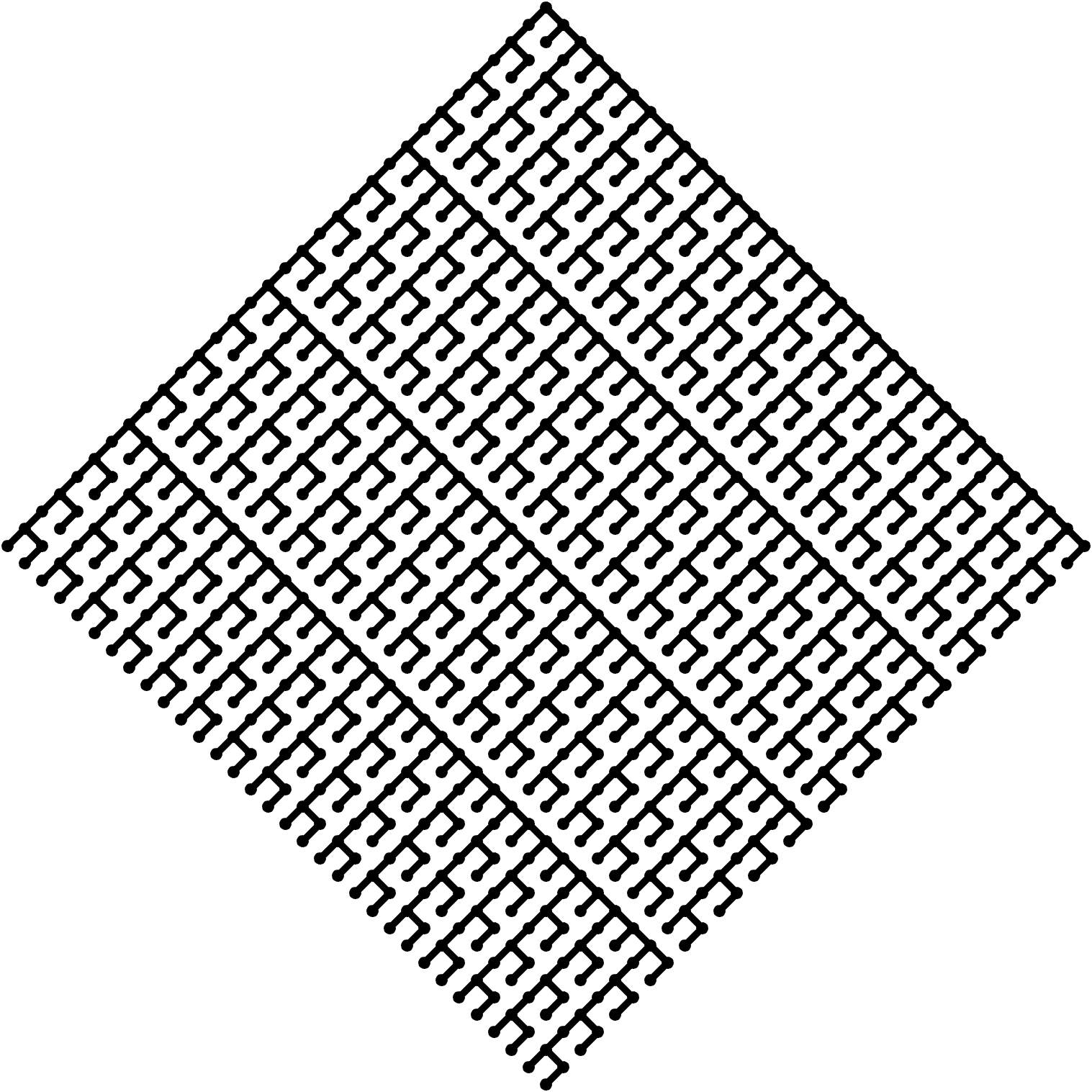}
    %\caption{$\mathcal T_s^{\B}(0010011101)$}
    \end{subfigure}
    \begin{subfigure}{0.32\textwidth}
    \includegraphics[width=\linewidth]{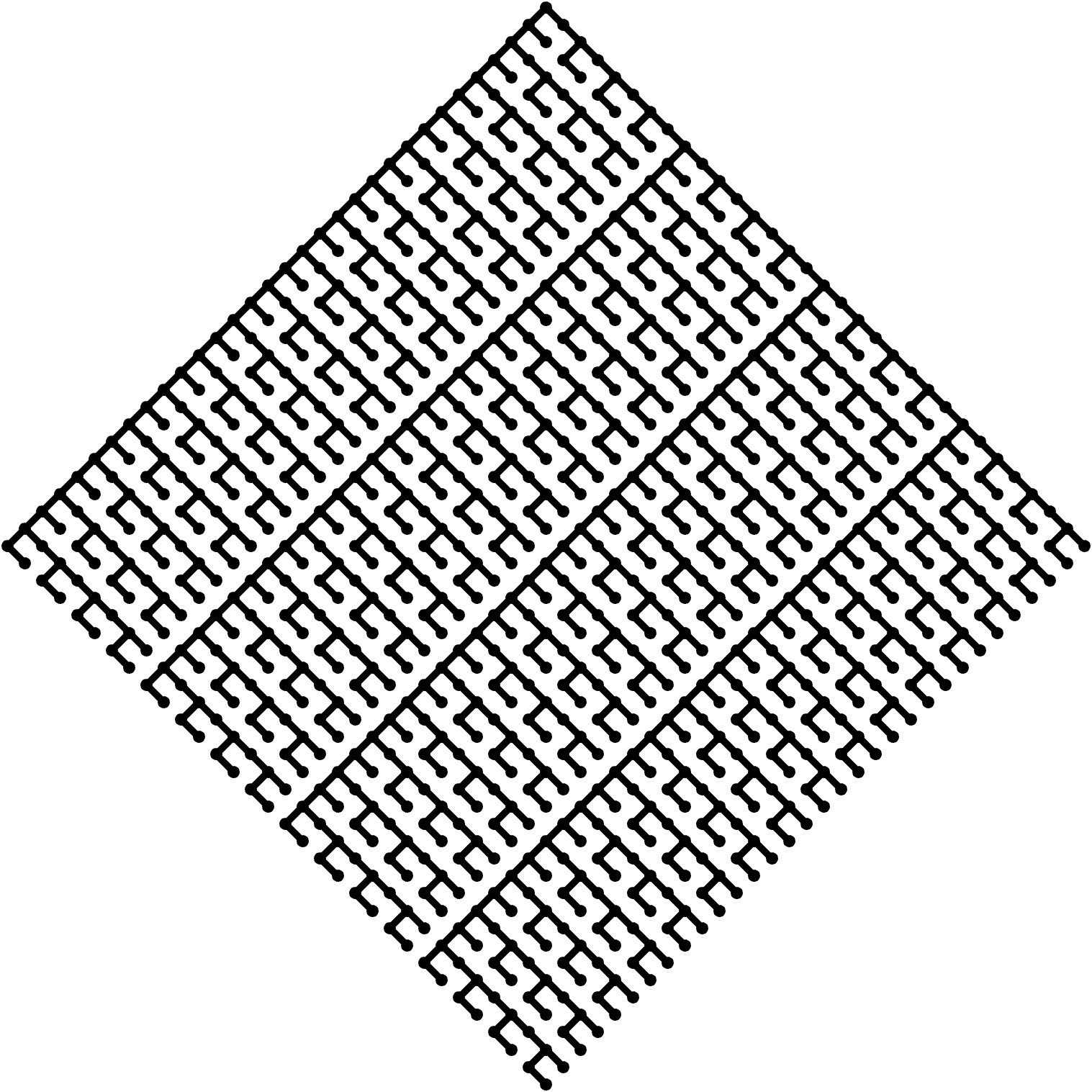}
    %\caption{$\mathcal T_s^{\B}(0000011111)$}
    \end{subfigure}
    \begin{subfigure}{0.32\textwidth}
    \includegraphics[width=\linewidth]{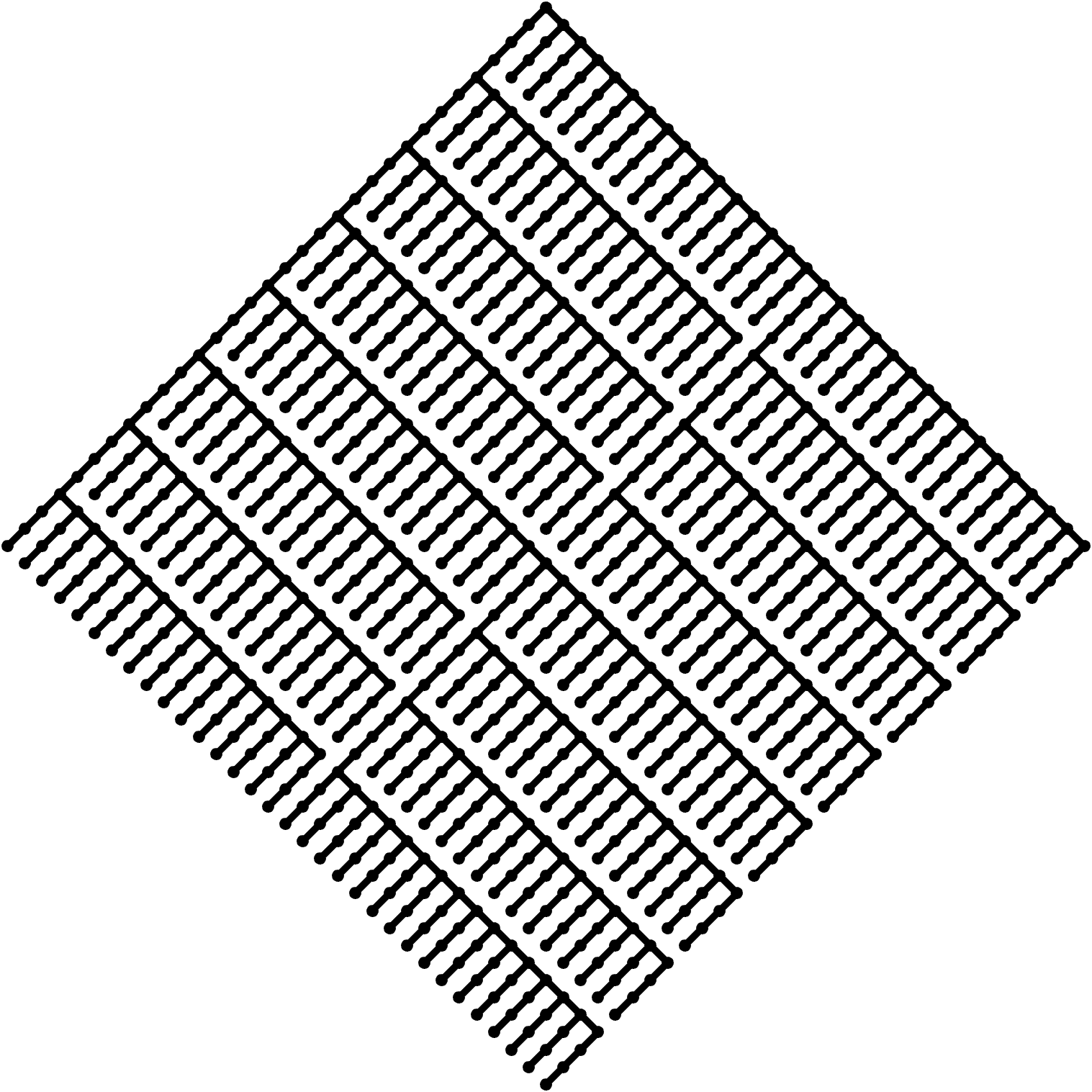}
    %\caption{$\mathcal T_s^{\B}(1001100101)$}
    \end{subfigure}
    \caption{Simple butterfly tree $\mathcal T^{\B}(\V x)$ with $\V x = 1011000011$ and $\hs(\mathcal T^{\B}(\V x)) = 3$, along with the HS number invariant butterfly trees formed using the inputs $\boldsymbol{1} - \V x=0100111100$ and $\texttt{rev}(\V x)=110000110{1}$ }
    \label{fig:Simple butterfly trees invariance}
\end{figure}

\subsubsection{Random simple butterfly trees}\label{sec: random simple}

We now consider the case of random simple butterfly trees $\mathcal T_n^{\B} = \mathcal T(\V x)$, which are generated using an iid sequence $x_j \sim \operatorname{Bern}(p)$ with $\V x \in \{0,1\}^n$, where $p = 1/2$ corresponds to uniform simple butterfly trees.

Our first goal is to establish simple butterfly trees can be exactly modeled as an additive functional of a Markov process, as seen in Theorem~\ref{thm:HS number_MC_model}. The remaining limit theorems all follow directly from this model.

\begin{proposition}\label{prop: MC}
For $p \in (0,1)$, let $(x_j)_{j \ge 1}$ be iid $\Bern(p)$, and set $X_0 = 0$. Then the process $(M_j)$ with
\[
M_j \;=\; (x_j,\,x_{j-1},\,X_{j-1}) \in \{0,1\}^3
\]
is an irreducible, aperiodic, $8$-state time-homogeneous Markov chain, with stationary distribution \begin{equation}\label{eq: stationary}
\boldsymbol{\pi} =
\frac{1}{1 - pq}
\left[
q^3,\ 
p^2q^2,\ 
p^2q,\ 
pq^3,\ 
pq^2,\ 
p^3q,\ 
p^3,\ 
p^2q^2
\right].
\end{equation}
\end{proposition}

\begin{proof}
By construction and Proposition~\ref{prop: s HS number increment}, the update at step $j$ depends only on the previous state $M_{j-1}$ together with the new bit $x_j$. Writing
\[
M_{j-1} = (a,b,c),
\]
where $a = x_{j-1}$, $b = x_{j-2}$, and $c = X_{j-1}$, the next state is obtained by shifting in the new bit and updating the increment:
\[
M_j = (\xi, a, f(a,b,c)), \qquad \xi = x_j \sim \Bern(p),
\]
where
\[
f(a,b,c) = \textnormal{\texttt{xor}}(a,b)\,(1-c) = X_j.
\]
Thus $M_j$ is determined entirely by $M_{j-1}$ and the new bit $\xi$, so the Markov property holds.

Labeling states $(a,b,c)$ by $k = 4a + 2b + c + 1$, the transition matrix is  
\[
P = 
\begin{bmatrix}
    q & 0 & 0& 0 & p&0&0&0\\
    q & 0 & 0 & 0& p&0&0&0\\
    0&q & 0 & 0&0 & p&0&0\\
    q & 0 & 0 & 0& p&0&0&0\\
    0&0&0&q & 0 & 0 & 0&p\\
    0&0&q & 0 & 0&0 & p&0\\
    0&0&q & 0 & 0&0 & p&0\\
    0&0&q & 0 & 0&0 & p&0
\end{bmatrix}, \quad q = 1-p.
\]

The corresponding transition digraph is  

\begin{center}
\begin{tikzpicture}[ ->, >=Stealth, node distance=2.5cm and 1.8cm,
every node/.style={draw, circle, minimum size=0.7cm, font=\small}, shorten >=2pt ] 
\node (000) at (0,0) {000}; \node (001) [right=of 000] {001}; \node (010) [right=of 001] {010}; \node (101) [right=of 010] {101}; 
\node (011) at (0,-2.5) {011}; \node (100) [right=of 011] {100}; \node (111) [right=of 100] {111}; \node (110) [right=of 111] {110}; 
\path (000) edge[loop left] (); \path (000) edge (100); \path (001) edge (000); \path (001) edge (100); 
\path (010) edge (001); \path[->, bend left=10] (010) edge (101); \path[->, bend left=10] (101) edge (010); \path (101) edge (110); 
\path (110) edge[loop right] (); \path (110) edge (010); \path (111) edge (010); \path (111) edge (110); \path (100) edge (111); 
\path[->, bend left=10] (100) edge (011); \path[->, bend left=10] (011) edge (100); \path (011) edge (000);
\end{tikzpicture} 
\end{center}

The digraph is strongly connected: for example, the cycles  
\[
000 \to 100 \to 011, \qquad 100 \to 111 \to 010 \to 001, \qquad 010 \to 101 \to 110
\]  
are interconnected via states $100$ and $010$, so every state can be reached from any other. Hence, the chain is irreducible. 

Finally, since the chain is irreducible and state $000$ has a 
self-loop with probability $q > 0$ for any $p \in (0,1)$, all 
states are aperiodic. It follows $(M_j)$ is ergodic, so $P$ has a unique stationary 
distribution $\boldsymbol{\pi}$ such that $\boldsymbol{\pi} P = \boldsymbol{\pi}$ and $\sum_i \pi_i = 1$. A direct computation yields $\boldsymbol{\pi}$ is of the form given in \eqref{eq: stationary}.
\end{proof}

We can now fully establish \Cref{thm:HS number_MC_model}.
\begin{proof}[Proof of \Cref{thm:HS number_MC_model}]
    Combine Propositions~\ref{prop: s HS number increment} and \ref{prop: MC}. 
\end{proof}

We will use again the bounded functional $f:\{0,1\}^3 \to \{0,1\}$ defined by
\[
f(a,b,c) = \texttt{xor}(a,b)\,(1-c).
\]
Then, by \Cref{thm:HS number_MC_model},
\[
X_j = f(x_j, x_{j-1}, X_{j-1}) = f(M_j),
\]
and hence
\[
\hs(\mathcal T_n^{\B}) \;=\; \sum_{j=1}^n X_j \;=\; \sum_{j=1}^n f(M_j).
\]
In particular, $\hs(\mathcal T_n^{\B})$ is an additive functional of the Markov chain $(M_j)$.

% \begin{remark}

% \end{remark}

\begin{figure}[t]
    \centering
    \includegraphics[width=0.8\linewidth]{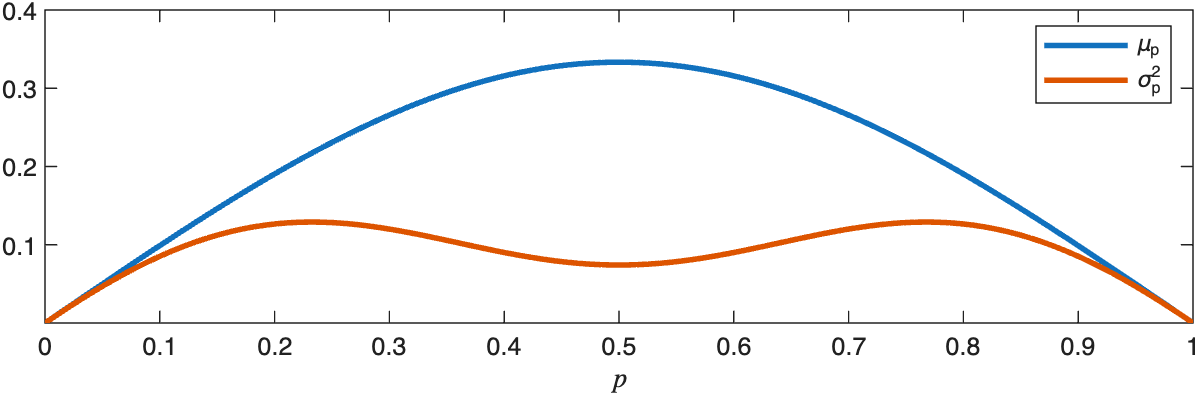}
    \caption{Plot of $\mu_p,\sigma_p^2$}
    \label{fig: HS number mean var}
\end{figure}

\Cref{thm:HS number_MC_model} is the main tool used for the following very strong limit theorems, that differentiates the HS number of random butterfly trees from all previously studied random tree models. For instance, we note that this Markov chain modeling enables a SLLN result for the HS number, in contrast to at best WLLN results for other random tree models.

% \begin{proposition}[SLLN]\label{thm: slln}
% Let $\mathcal T_n^{\B} = \mathcal T^{\B}(\V x)$ be a simple butterfly tree with $N = 2^n$ nodes, where $x_j \sim \Bern(p)$ iid. Then
% \[
% \frac{1}{n} \hs(\mathcal T_n^{\B})\;\xrightarrow[n\to\infty]{\text{a.s.}}\; \mu_p := \frac{pq}{1 - pq}.
% \]
% \end{proposition}

\begin{proof}[Proof of Theorem~\ref{thm:SLLN} (SLLN)]
Since $f$ is bounded and $(M_j)_{j \ge 1}$ is an irreducible, aperiodic Markov chain on a finite state space with stationary distribution $\boldsymbol{\pi}$, the Ergodic theorem applies:
\[
\frac{1}{n} \hs(\mathcal T_n^{\B})=\frac{1}{n}\sum_{j=1}^n f(M_j) \;\xrightarrow[n\to\infty]{\text{a.s.}}\; \E_{\boldsymbol{\pi}}[f].
\]
A direct calculation yields
\[
\mu_p := \E_{\boldsymbol{\pi}}[f] = \sum_{s\in\{0,1\}^3} f(s) \pi(s) = f(0,1,0)\frac{p^2q}{1 - pq} + f(1,0,0)\frac{pq^2}{1-pq}= \frac{pq}{1 - pq}.
\]
% Thus
% \[
% \frac{1}{n} \hs(\mathcal T_n^{\B}) \xrightarrow[n\to\infty]{\text{a.s.}} \mu_p.
% \]
\end{proof}

The WLLN (\Cref{cor:wlln}) is a direct consequence of the SLLN (\Cref{thm:SLLN}).

\begin{remark}
\Cref{fig: HS number mean var} shows a plot of $\mu_p$, which is maximized at $\mu_{1/2} = 1/3$. Recalling that $n = \log_2 N$, the SLLN can be rewritten as
\[
\frac{\hs(\mathcal T_n^{\B})}{\log_{2} N} \;\xrightarrow[n\to\infty]{\text{a.s.}}\; \mu_p \le \mu_{1/2}=\frac13.
\]
This contrasts directly with the WLLN result for Catalan trees, that has the HS number scaled by $\log_2 N$ limit to $1/2$.
\end{remark}

Unlike in the classical random tree models, this Markov modeling enables now additionally second order fluctuations. For additive functionals of finite, irreducible, aperiodic Markov chains, the asymptotic variance can be characterized through the {Poisson equation}
\begin{equation}\label{eq: Poisson eqn}
(\V I - P) g = h,
\end{equation}
where $P$ is the transition matrix and $ h =  f - \mu_p \mathbf{1}$ is the centered functional, using again the bounded functional $f$. (See \cite{meyn2009markov} or \cite{jones2004markov} for further discussion.) Since $h$ is centered under $\boldsymbol{\pi}$, the Poisson equation admits a solution (unique up to additive constants). Although $\V I - P$ is singular (since $P\V 1 = \V 1$), the modified system
\[
(\V I - P + \mathbf{1}\boldsymbol\pi) g = h
\]
is nonsingular and admits a unique solution $g$ orthogonal to $\boldsymbol\pi$. A direct computation obtains explicitly $g = (\mathbf{I} - P + \mathbf{1}\boldsymbol\pi)^{-1} h$ where
\begin{equation}\label{eq: Poisson soln}
g =
\frac{1}{8(1 - pq)} 
\left[
8p - 5,\ 
8p - 5,\ 
3,\ 
8p - 5,\ 
3,\ 
3 - 8p,\ 
3 - 8p,\ 
3 - 8p
\right]^\top.
\end{equation}
Then the variance constant
\[
\sigma_p^2 = \lim_{n\to\infty} \frac1n \Var\!\left[\sum_{j=1}^n X_j\right]
\]
exists and is given by
\[
\sigma_p^2 = \Var_{\boldsymbol\pi}[f] + 2 \sum_{k=1}^\infty \operatorname{Cov}(f(M_1), f(M_k)) 
= \E_{\boldsymbol\pi}\!\left[ (h + 2Pg)\odot h \right],
\]
where $\odot$ denotes the componentwise Hadamard product. In closed form, a direct computation yields
\begin{equation}\label{eq: asym var closed}
\sigma_p^2 = \frac{pq \left(1 - 3pq - 2p^2q^2\right)}{(1 - pq)^3}.
\end{equation}

\begin{remark}
    Both $\mu_p$ and $\sigma_p^2$ are symmetric in $p$ and $q$, and depend only on $r := pq$. Thus $\hs(\mathcal T^{\B}(\V x)) \stackrel{d}{=} \hs(\mathcal T^{\B}(\V y))$ if $x_j \sim \Bern(p)$ and $y_j \sim \Bern(q)$, each iid, which corresponds to reflection of the butterfly tree about its root, that switches each $\oplus/\ominus$ merging rule but preserves the HS number (see \Cref{rmk: invariance}). Moreover, the variance-to-mean ratio
    \[
    g(r) := \frac{\sigma_p^2}{\mu_p} = \frac{1 - 3r - 2r^2}{(1 - r)^2}
    \]
    is strictly decreasing on $r \in [0,\frac14]$, ranging from $g(0)=1$ to $g(1/4)=2/9$, which yields additionally $0 \le \sigma_p^2 \le \mu_p$ (see Figure \ref{fig: HS number mean var}). The variance is maximized at $p^* = \frac{1+\sqrt{13}}{6} \approx 0.7676$ and its symmetric counterpart $1-p^*$, while $\sigma^2_{1/2} = 2/27$.
\end{remark}

With this Markov process representation for the HS number of simple butterfly trees, together with the asymptotic mean and variance computations $\mu_p$ and $\sigma_p^2$, we obtain not only a standard CLT for the fluctuations (\Cref{cor:CLT}), but also a \textit{functional CLT} (Theorem~\ref{thm:FCLT}).

\begin{proof}[Proof of Theorem~\ref{thm:FCLT} (Functional CLT)]
By \Cref{thm:HS number_MC_model}, the quantity $\hs(\mathcal T_n^{\B})$ can be expressed as an additive functional of a finite Markov chain. The result therefore follows directly from the functional CLT for Markov chains \cite[Theorem 17.4]{meyn2009markov}, since the Poisson equation admits a solution and the asymptotic variance $\sigma_p^2$ is finite.
\end{proof}

\begin{figure}[t]
    \centering
    \includegraphics[width=0.8\linewidth]{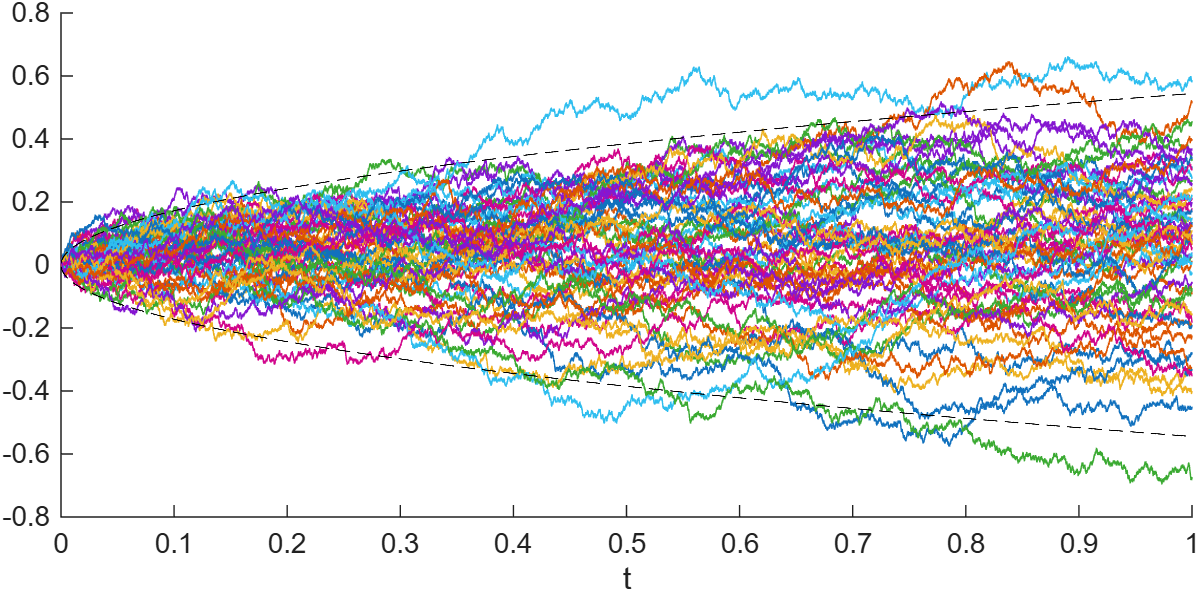}
    \caption{60 sampled paths of $W_n(\frac12,t)$ with $n = 10{,}000$, shown alongside the 95\% confidence envelope bounded by $\pm 2\sigma_{1/2} \sqrt{t} = \pm  \sqrt{\frac{8}{27} t}$ over $t \in [0,1]$.}
    \label{fig:BM limit}
\end{figure}

Figure \ref{fig:BM limit} shows 60 sampled paths of \[
    W_n(p,t)
=
\sqrt n
\left(
\frac1n\sum_{j=1}^{\lfloor nt\rfloor}X_j
-
t\mu_p
\right),
\]
corresponding to $\hs(\mathcal T_n^{\B})$ for iid uniform simple butterfly trees with $n = 10{,}000$, so that $N \approx 1.995 \cdot 10^{3{,}010}$; this again utilized the efficient $\mathcal O(n)$ sampling method for the HS number of simple butterfly trees that would otherwise remain prohibitive for standard HS number algorithms as well as our direct HS number for general butterfly trees algorithm \Cref{alg:hs_profile_corrected} (see also Remark \ref{rmk: runtime}).

\subsubsection{Uniform simple butterfly trees}\label{sec: unif simple}

Uniform simple butterfly trees correspond to the case $p = 1/2$. In this
setting, the structure of the HS number admits a particularly clean description.

Recall from \Cref{prop: runs} that $\hs(\mathcal T^{\B}(\mathbf{x}))$ is
determined by the run-lengths number of the sequence
\[
y_j = \textnormal{\texttt{xor}}(x_{j+1},x_j),
\]
with each maximal run of $1$'s of length $k$ contributing $\lceil k/2 \rceil$.
Since $\mathbf{x}$ is uniform, the sequence $\mathbf{y}$ consists of iid\
$\mathrm{Bern}(1/2)$ variables, and the HS number reduces to counting nonconsecutive
successes in $\mathbf{y}$. Moreover, the Markov chain representation of the
increment process compresses from the 8-state model for general $p$: since
$x_j \sim \mathrm{Bern}(1/2)$ are independent, the variables
$\textnormal{\texttt{xor}}(x_j,x_{j-1})\sim \mathrm{Bern}(1/2)$ are additionally independent of $X_{j-1}$ (these are dependent for $p \ne 1/2$). Hence, the law of $X_j$ only depends on the law of $X_{j-1}$. Hence,
$(X_j)_{j \ge 1}$ itself now comprises a two-state time-homogeneous Markov chain with
transition matrix
\begin{center}
\begin{minipage}{0.4\textwidth}
\centering
$P =
\begin{bmatrix}
1/2 & 1/2 \\
1 & 0
\end{bmatrix}$
\end{minipage}
\hfill
\begin{minipage}{0.5\textwidth}
\centering
\begin{tikzpicture}[->, >=Stealth, node distance=3cm, thick]
\node[circle, draw, minimum size=0.7cm, font=\small] (0) {$0$};
\node[circle, draw, minimum size=0.7cm, font=\small, right of=0] (1) {$1$};
\path[->, bend left=15] (0) edge node[above] {} (1);
\path[->, bend left=15] (1) edge node[below] {} (0);
\path[->] (0) edge[loop left] node[left] {} ();
\end{tikzpicture}
\end{minipage}
\end{center}
and stationary distribution $\boldsymbol{\pi} = [2/3, \ 1/3]$.

This structure yields a closed form for the distribution of $\hs(\mathcal
T_n^{\B})$. For the uniform case, this follows from a direct combinatorial argument.

\begin{proposition}\label{prop: pmf}
Let $\mathcal T_n^{\B}$ be a uniform simple butterfly tree with $N = 2^n$ nodes.
Then
\begin{equation}\label{eq: pmf}
    \P\left(\hs(\mathcal T_n^{\B}) = k\right)
= 2^{k-n}\left[\binom{n-k}{k} + \binom{n-k-1}{k}\right],
\quad k = 0,1,\ldots,\lfloor n/2 \rfloor.
\end{equation}
\end{proposition}

\begin{proof}
By \Cref{prop: runs}, computing $\hs$ from $\mathbf{x}
\in \{0,1\}^n$ reduces to counting nonconsecutive bit flips. Each configuration
with exactly $k$ flips uses $2k$ bits for the flip pairs (each with two
orientations, contributing $2^k$) and distributes the remaining $n-2k$ bits
into gaps. Conditioning on whether the last flip occurs strictly before position
$n$ or at position $n$, a stars-and-bars argument gives $\binom{n-k-1}{k}$ and
$\binom{n-k-1}{k-1}$ placements respectively, with an additional factor of $2$
for trailing bits in the first case. Combining via Pascal's identity yields
\[
2^k \cdot \left[2\binom{n-k-1}{k} + \binom{n-k-1}{k-1}\right]
= 2^k \left[\binom{n-k}{k} + \binom{n-k-1}{k}\right]
\]
configurations, and dividing by $2^n$ completes the proof.
\end{proof}

In contrast, for $p \ne 1/2$ the dependence structure in $\mathbf{y}$ prevents
such a closed form; the distribution can instead be expressed in terms of gap
probabilities between successive flips, which we will not pursue at present.

\begin{remark}
The probability generating function for the HS number of uniform simple butterfly trees, $G_n(x) = \E[x^{\hs(\mathcal T_n^{\B})}]$,
admits a recursive structure relating successive merging rules, which can be written as a function of the generating function for the number of simple butterfly trees with a multiplicity of nodes with maximal HS number, $F_n(x)$, where
\[
G_n(x) = 1 + \frac12(x-1) \sum_{k=0}^{n-1} 2^{-k}F_k(x), \qquad F_{n+1}(x) = 2x F_{n-1}(x) + F_n(x), \quad F_0(x) = 0.
\]
This recursive structure implies
that each successive $G_n(x)$ has interlacing negative real roots (which can be established directly
via tools from \cite{liu2007unified}), as illustrated in
Figure~\ref{fig: pgf zeros}, which plots the zeros of $G_n(x)$ in $[0,20]$
for $n \le 42$.\footnote{Double-precision floating-point errors in MATLAB
produced spurious complex roots for $n \ge 43$.} The interlacing of negative
zeros yields an alternative proof of the CLT (\Cref{cor:CLT}), in the
flavor of Harper's original CLT proof for the Stirling distribution
\cite{Harper1967}.
\end{remark}

\begin{figure}[t]
    \centering
    \includegraphics[width=0.7\linewidth]{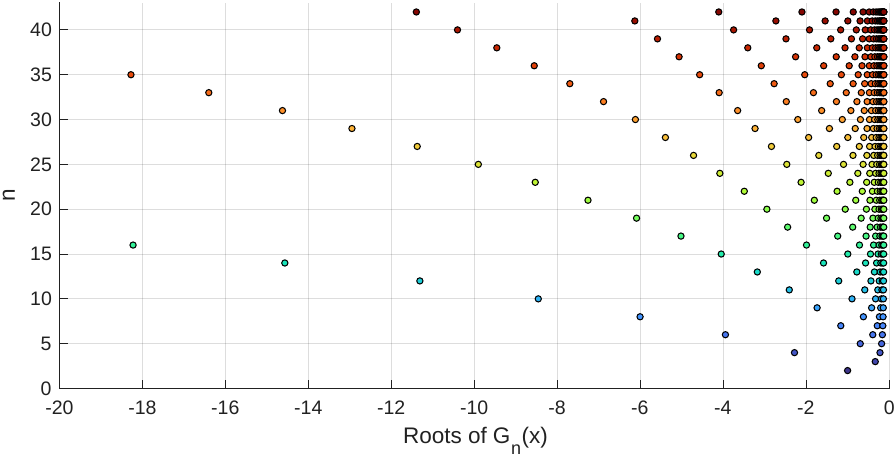}
    \caption{Interlacing roots of $G_n(x)$ for successive levels}
    \label{fig: pgf zeros}
\end{figure}

Either using the explicit probability mass function \eqref{eq: pmf}, the transition matrix, or the probability generating function, one can compute exact
expressions for the mean and variance:
\begin{align*}
\E[\hs(\mathcal T_n^{\B})] &= \frac{n}3 - \frac29 + \frac29
  \left(-\frac12\right)^n = \frac13 \log_2 N +\mathcal O(1), \\%\label{eq: mean}\\
\Var(\hs(\mathcal T_n^{\B})) &= \frac2{27} n + \frac2{81} + \frac4{27} n
  \left(-\frac12\right)^n + \frac{2}{81} \left(-\frac12\right)^n
  - \frac4{81} \left(\frac14\right)^n = \frac2{27} \log_2 N +\mathcal O(1).
  %\label{eq: var}
\end{align*}
In particular, we note the mean and variance are of the same order (which is also true for $p \ne 1/2$). This further allows
alternative direct proofs of the WLLN via Chebyshev's inequality and CLT via the Quasi-Power Theorem
\cite{Hwang1998convergence}.

\subsection{General Butterfly Trees}\label{sec: nonsimple}

We now return to the general butterfly tree model. We again state the recursive construction yields 
\begin{equation}\label{eq: rde2}
    \hs(\mathcal T_{n+1}^{\B}) 
= \max\!\left(\hs(\mathcal T_n^{\B}), \hs(\widetilde{\mathcal T}_n^{\B})\right) + I_n,
\end{equation}
where $I_n = I(\mathcal T_n^{\B},\widetilde{\mathcal T}_n^{\B},t) \in \{0,1\}$ indicates whether the increment conditions of \Cref{prop: merge increment} are satisfied, with $\oplus$ used if $t=0$ and $\ominus$ if $t=1$. Thus the HS number evolves via a max-plus recursion with a bounded, local correction determined by the merge interface.

The key difference from the simple butterfly setting is that $I_n$ is no longer determined by the root values alone. Instead, it depends on the full edge profile $\texttt{profile}(\mathcal T_n^{\B})$, which records the configuration of maximal HS paths along the gluing edge. Consequently, while the evolution remains Markovian at the level of profiles, the state space grows with $n$ and does not collapse to a fixed finite set. In particular, the presence of the maximum in the RDE prevents a reduction to an additive functional.

In the simple model, identical inputs force the profile to collapse to a single root parameter, yielding a finite-state Markov chain and enabling a complete analysis in \Cref{sec: simple}. In contrast, for general butterfly trees, the parent and child may have substantially different profiles, and the increment depends on their full configuration. As a result, $\hs(\mathcal T_n^{\B})$ cannot be described solely in terms of its current value (if defining an evolving process by tracking the profile of the parent trees at each recursive step), and the finite-state framework no longer applies.

% Nevertheless, the HS number can still be computed efficiently from the input $(N-1)$-bitstring via the profile recursion (see \Cref{alg:hs_profile_corrected,alg:merge_final}), avoiding explicit construction of the BST.

\begin{figure}[t]
\centering

% -------- ROW 1 --------
\begin{subfigure}[b]{0.22\textwidth}
\centering
\begin{tikzpicture}[scale=0.8]
% 1234 (right chain)
\node[circle,draw] (1) at (0,0) {0};
\node[circle,draw] (2) at (1,-1) {0};
\node[circle,draw] (3) at (2,-2) {0};
\node[circle,draw] (4) at (3,-3) {0};
\draw (1)--(2)--(3)--(4);
\end{tikzpicture}
\caption{$\mathcal T^{\B}(000) = \mathcal T(1234)$ \\ 
\centering $\left(0;0, \begin{bmatrix}
    1\\1
\end{bmatrix}; 1,\begin{bmatrix}
    1\\0
\end{bmatrix}\right)$}
\end{subfigure}
\hfill
\begin{subfigure}[b]{0.22\textwidth}
\centering
\begin{tikzpicture}[scale=0.8]
% 3412
\node[circle,draw] (3) at (0,0) {1};
\node[circle,draw] (1) at (-1,-1) {0};
\node[circle,draw] (2) at (0,-2) {0};
\node[circle,draw] (4) at (1,-1) {0};
\draw (3)--(1);
\draw (1)--(2);
\draw (3)--(4);
\end{tikzpicture}
\caption{$\mathcal T^{\B}(001) = \mathcal T(3412)$ \\ 
\centering $\left(1;0, \begin{bmatrix}
    1&1\\1&0
\end{bmatrix};0,\begin{bmatrix}
    1&1\\0&0
\end{bmatrix}\right)$}
\end{subfigure}
\hfill
\begin{subfigure}[b]{0.22\textwidth}
\centering
\begin{tikzpicture}[scale=0.8]
% 1243
\node[circle,draw] (1) at (0,0) {0};
\node[circle,draw] (2) at (1,-1) {0};
\node[circle,draw] (4) at (2,-2) {0};
\node[circle,draw] (3) at (1,-3) {0};
\draw (1)--(2)--(4);
\draw (4)--(3);
\end{tikzpicture}
\caption{$\mathcal T^{\B}(010) = \mathcal T(1243)$ \\ 
\centering $\left(0;0, \begin{bmatrix}
    1\\1
\end{bmatrix};1,\begin{bmatrix}
    1\\1
\end{bmatrix}\right)$}
\end{subfigure}
\hfill
\begin{subfigure}[b]{0.22\textwidth}
\centering
\begin{tikzpicture}[scale=0.8]
% 3421
\node[circle,draw] (3) at (0,0) {1};
\node[circle,draw] (4) at (1,-1) {0};
\node[circle,draw] (2) at (-1,-1) {0};
\node[circle,draw] (1) at (-2,-2) {0};
\draw (3)--(4);
\draw (3)--(2);
\draw (2)--(1);
\end{tikzpicture}
\caption{$\mathcal T^{\B}(011) = \mathcal T(3421)$ \\ 
\centering $\left(1;0, \begin{bmatrix}
    1&1\\0&0
\end{bmatrix};0,\begin{bmatrix}
    1&1\\0&0
\end{bmatrix}\right)$}
\end{subfigure}

\vspace{0.5cm}

% -------- ROW 2 --------
\begin{subfigure}[b]{0.22\textwidth}
\centering
\begin{tikzpicture}[scale=0.8]
% 2134
\node[circle,draw] (2) at (0,0) {1};
\node[circle,draw] (1) at (-1,-1) {0};
\node[circle,draw] (3) at (1,-1) {0};
\node[circle,draw] (4) at (2,-2) {0};
\draw (2)--(1);
\draw (2)--(3);
\draw (3)--(4);
\end{tikzpicture}
\caption{$\mathcal T^{\B}(100) = \mathcal T(2134)$ \\ 
\centering $\left(1;0, \begin{bmatrix}
    1&1\\0&0
\end{bmatrix};0,\begin{bmatrix}
    1&1\\0&0
\end{bmatrix}\right)$}
\end{subfigure}
\hfill
\begin{subfigure}[b]{0.22\textwidth}
\centering
\begin{tikzpicture}[scale=0.8]
% 4312
\node[circle,draw] (4) at (0,0) {0};
\node[circle,draw] (3) at (-1,-1) {0};
\node[circle,draw] (2) at (-2,-2) {0};
\node[circle,draw] (1) at (-1,-3) {0};
\draw (4)--(3)--(2);
\draw (2)--(1);
\end{tikzpicture}
\caption{$\mathcal T^{\B}(101) = \mathcal T(4312)$ \\ 
\centering $\left(0;1, \begin{bmatrix}
    1\\1
\end{bmatrix};0,\begin{bmatrix}
    1\\1
\end{bmatrix}\right)$}
\end{subfigure}
\hfill
\begin{subfigure}[b]{0.22\textwidth}
\centering
\begin{tikzpicture}[scale=0.8]
% 2143 (duplicate as requested)
\node[circle,draw] (2) at (0,0) {1};
\node[circle,draw] (1) at (-1,-1) {0};
\node[circle,draw] (3) at (0,-2) {0};
\node[circle,draw] (4) at (1,-1) {0};
\draw (4)--(2);
\draw (1)--(2);
\draw (3)--(4);
\end{tikzpicture}
\caption{$\mathcal T^{\B}(110) = \mathcal T(2143)$ \\ 
\centering $\left(1;0, \begin{bmatrix}
    1&1\\0&0
\end{bmatrix};0,\begin{bmatrix}
    1&1\\1&0
\end{bmatrix}\right)$}
\end{subfigure}
\hfill
\begin{subfigure}[b]{0.22\textwidth}
\centering
\begin{tikzpicture}[scale=0.8]
% 4321 (left chain)
\node[circle,draw] (4) at (0,0) {0};
\node[circle,draw] (3) at (-1,-1) {0};
\node[circle,draw] (2) at (-2,-2) {0};
\node[circle,draw] (1) at (-3,-3) {0};
\draw (4)--(3)--(2)--(1);
\end{tikzpicture}
\caption{$\mathcal T^{\B}(111) = \mathcal T(4321)$ \\ 
\centering $\left(0;1, \begin{bmatrix}
    1\\0
\end{bmatrix};0,\begin{bmatrix}
    1\\1
\end{bmatrix}\right)$}
\end{subfigure}

\caption{Butterfly trees with 4 nodes, with HS number profiles and labels.}
\label{fig: bst_4_nodes}
\end{figure}

Although the state space does not admit a finite reduction, the HS number can still be computed efficiently from the input $(N-1)$-bitstring via the profile recursion (see \Cref{alg:hs_profile_corrected,alg:merge_final}), avoiding explicit construction of the BST. We begin by illustrating the full profile structure in small cases.

\medskip

\begin{example}[$n \le 2$]
For $n \le 1$, $\mathcal T_n^{\B}$ is either a single vertex or a path, so $\hs=0$. The tree $\mathcal T^{\B}(0)$ has edge profiles $L = [1,1]^\top$ and $R = [1,0]^\top$ with $m^{(L)} = 1$, while $\mathcal T^{\B}(1)$ swaps $L$ and $R$ by reflection.

For $n=2$, there are $2$ choices for each parent and child (from $n=1$) and $2$ choices of gluing operator, giving $8$ trees (which matches $|\B_2| = 2^{2^2-1} = 8$). \Cref{fig: bst_4_nodes} displays all trees with their HS profiles.

Profiles for $\mathcal T^{\B}(\mathbf{x})$ and $\mathcal T^{\B}(\mathbf{1}-\mathbf{x})$ are reversed, corresponding to reflection (e.g., $\mathbf{x}=001$ and $110$). Thus there are $7$ distinct profiles, as one pair ($011,101$) is symmetric. Simple butterfly trees correspond to repeating the first two bits (viz., $000,001,110,111$), and admit the compressed $n$-bit representation ($00,01,10,11$).

\end{example}

While \Cref{prop: profile update} provides deterministic rules for updating edge profiles under the merging operation, these rules can in principle be iterated to compute exact HS number distributions at fixed levels $n$. Indeed, one may propagate profiles through the recursion to obtain a complete enumeration of butterfly trees at each level, by only tracking a representative input bitstring per distinct profile along with a multiplicity counter. Still, the number of distinct profiles grows rapidly with $n$, and the associated bookkeeping becomes increasingly complex. Although this structure allows exact computations up to $n=8$ via a reduction to profile equivalence classes (see \Cref{sec: exact}), the state space growth makes further exact analysis impractical.

We therefore turn to the asymptotic regime, where the focus shifts from finite-level enumeration to the large-$n$ behavior of the HS number under random inputs, leading to the limiting distributional results and the WLLN conjecture developed in the next section.

\subsubsection{Random butterfly trees}
\label{sec: rand gen butterfly}

We now turn to uniformly random butterfly trees $\mathcal T^{\B}(\mathbf{x})$, obtained by taking $\mathbf{x} \in \{0,1\}^{N-1}$ with iid $x_j \sim \mathrm{Bern}(1/2)$. This probabilistic viewpoint leads naturally to the recursive distributional formulation in \eqref{eq: rde2}, where the HS number is expressed as a max-type functional of two independent subtrees together with a local increment term.

In this setting, the deterministic recursion in \eqref{eq: rde2} induces a max-type RDE. Writing $(h_p,L_p,R_p)$ and $(h_c,L_c,R_c)$ for the profiles of the parent and child subtrees, and letting $G$ denote the gluing side (i.e., $G = R$ if $t=0$ and $G = L$ if $t=1$), the increment event can be written as
\[
I_n 
= \mathds{1}\!\left( b_k^{(G_p)} = 1 \;\; \text{for all } k = h_c, \ldots, h_p \right).
\]

Under iid inputs and an independent choice of $t$, the indicator $I_n$ is conditionally Bernoulli given the profiles. Its mean is
\[
\beta_n := \mathbb{E}[I_n]
= \mathbb{E}\!\left[\hs(\mathcal T_{n+1}^{\B})\right]
- \mathbb{E}\!\left[\max\!\left(\hs(\mathcal T_n^{\B}),\hs(\widetilde{\mathcal T}_n^{\B})\right)\right].
\]
So $\beta_n$ represents the probability that a full cascade occurs along the gluing edge, i.e., that the increment condition propagates all the way to the root under the merging operation.

We first compute the exact distribution of $\hs(\mathcal T_n^{\B})$ for small $n$.

%%%%%%%%%%%%%%%%%%%%%%%%%%%%%%%%%%%%%%%%%%%%%%%%%%%%%%%%%%%%%%%%%%%%%%%%%%%%
\subsubsection{Limiting behavior}
\label{sec: limiting dist}
%%%%%%%%%%%%%%%%%%%%%%%%%%%%%%%%%%%%%%%%%%%%%%%%%%%%%%%%%%%%%%%%%%%%%%%%%%%%

\begin{figure}
    \centering
    \includegraphics[width=0.75\linewidth]{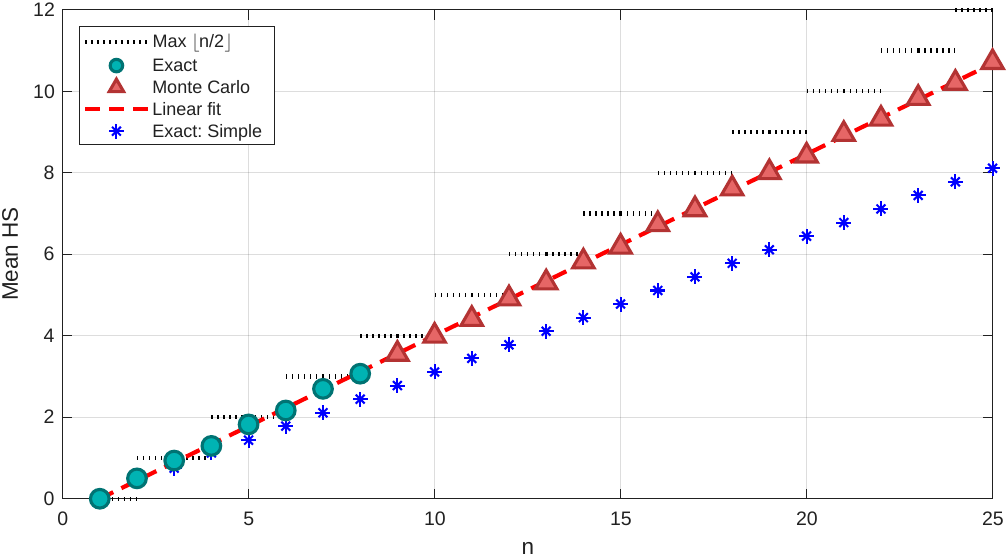}
    \caption{Mean of $\hs(\mathcal T_n^{\B})$ with linear regression fit, compared to the maximal HS number $\floor{n/2}$ for butterfly trees and exact average HS numbers of uniformly random simple butterfly trees.}
    \label{fig:mean_gen}
\end{figure}

As noted previously, the max-type RDE \eqref{eq: rde2} for random butterfly trees does not fall within existing frameworks that permit a complete analysis, in contrast to the simple butterfly setting. Our aim here is therefore more modest: to present supporting evidence for the weak law of large numbers stated in \Cref{conj: wlln butterfly}.

We study the normalized HS number
\[
Y_n := \frac{1}{n}\hs(\mathcal T_n^{\B}).
\]
By \Cref{prop: butterfly support} (the support of butterfly trees), we have $Y_n \in [0,1/2]$, so the sequence $(Y_n)$ is tight and admits subsequential limits $Y_{n_k} \Rightarrow Y$ with $Y \in [0,1/2]$. The conjecture is that this limit is in fact deterministic, yielding
\[
Y_n \xrightarrow[n\to\infty]{\mathbb P} \alpha \in [0,1/2].
\]

To investigate this, we perform Monte Carlo simulations for uniformly random butterfly trees across a range of values of $n$. A summary of the experiments is provided in \Cref{tab: summary statistics} in \Cref{sec: experiment data}, including sample sizes, empirical means, variances (compared with the exact values obtained in the simple butterfly model), and observed ranges. For sufficiently large $n$, the sampled distributions become increasingly concentrated, with only a small number of distinct values (range of $2-3$ value) appearing at all.

These experiments are consistent with convergence to a constant, and suggest the estimate
\[
\alpha \approx 0.4450.
\]
Assuming \Cref{conj: wlln butterfly} with this value of $\alpha$, general uniform butterfly trees would lie between simple butterfly trees, which satisfy a WLLN with limit $1/3$, and Catalan trees, which have limit $1/2$.

\medskip

Focusing on the WLLN for uniformly random butterfly trees, we combine exact computations for $n \le 8$ (see \Cref{sec: exact}) with Monte Carlo experiments up to $n = 25$. The samples are generated using a direct implementation of the HS profile recursion, which evaluates $\hs(\mathcal T^{\B}(\mathbf{x}))$ directly from the $(N-1)$-bit encoding $\mathbf{x}$ without explicitly constructing the tree; see \Cref{alg:hs_profile_corrected,alg:merge_final,alg:build_right_final,alg:build_left_final,alg:utils_final} in \Cref{sec: algorithms}.

Across both regimes, the data are consistent with an approximately linear growth of $\mathbb{E}[\hs(\mathcal T_n^{\B})]$ in $n$. Summary statistics are collected in \Cref{tab: summary statistics} (\Cref{sec: experiment data}), combining exact values for $n \le 8$ with Monte Carlo estimates for larger $n$. A linear regression on the observed means yields
\[
\mathbb{E}[\hs(\mathcal T_n^{\B})]
\approx
-0.4440483947 + 0.4449840084 \cdot n,
\]
with $R^2 = 0.9998609691$; see \Cref{fig:mean_gen}. Equivalently, the normalized quantity $Y_n = \hs(\mathcal T_n^{\B})/n$ appears to stabilize near
\[
\alpha \approx 0.4450.
\]

To support the WLLN convergence in probability, Chebyshev's inequality suggests two sufficient conditions:
\begin{enumerate}[label=(\roman*)]
    \item $\Var(Y_n) = o(1)$,
    \item $\E[Y_n]$ converges.
\end{enumerate}
We focus first on the variance and state a strengthening consistent with the data.

\begin{conjecture}\label{conj: var}
For uniformly random butterfly trees,
\[
\Var(\hs(\mathcal T_n^{\B})) = \Theta(1).
\]
\end{conjecture}

If \Cref{conj: var} holds, then
\[
\Var(Y_n) = \frac{1}{n^2}\Var(\hs(\mathcal T_n^{\B})) = \Theta(n^{-2}) = o(1),
\]
so condition (i) follows. In particular, any subsequential limit of $Y_n$ must be deterministic, as its variance vanishes.

\begin{figure}[t]
    \centering
    \includegraphics[width=0.8\linewidth]{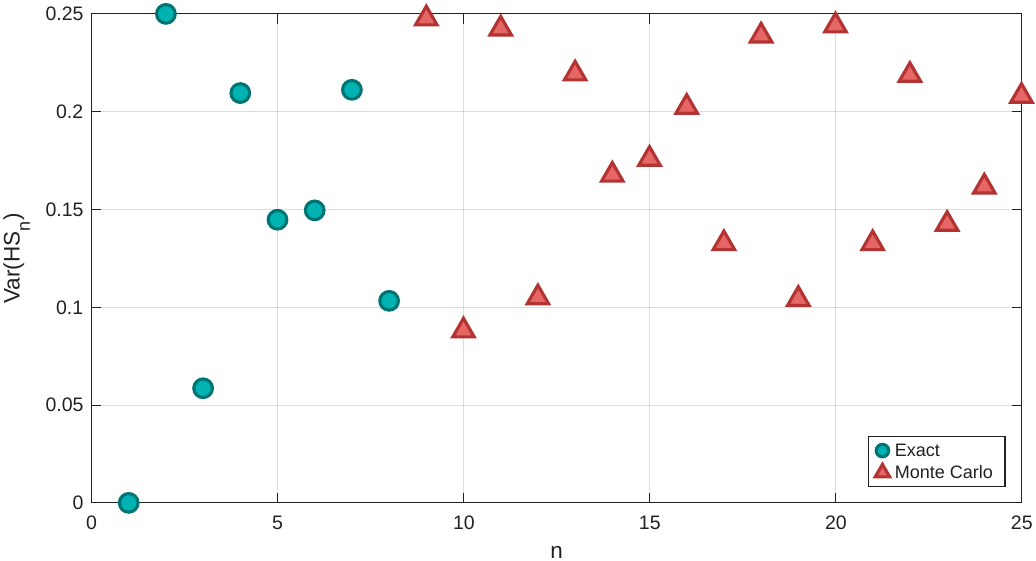}
    \caption{Variance of $\hs(\mathcal T_n^{\B})$. Exact values for $n \le 8$ and Monte Carlo estimates thereafter indicate bounded behavior.}
    \label{fig:variance}
\end{figure}

Exact computations for $n \le 8$ and Monte Carlo simulations up to $n=25$ support \Cref{conj: var}; see \Cref{sec: experiment data}. Across all observed values of $n$, the variance remains bounded and does not exceed approximately $0.25$. Moreover, for each fixed $n$, the sampled HS values concentrate on only two or three distinct values, despite the full support $\{0,1,\ldots,\lfloor n/2\rfloor\}$. This strong concentration is consistent with bounded fluctuations.

The data also suggest a mild oscillatory component. Empirically,
\[
\Var(\hs(\mathcal T_n^{\B})) = D(n) + \mathcal O(1),
\]
where $D$ appears to be a periodic function. As illustrated in \Cref{fig:variance}, the variance exhibits small oscillations (of amplitude about $0.1$ around a mean near $0.15$), with a repeating pattern visible at $n \approx 6,15,24$. This behavior is reminiscent of the oscillatory corrections observed for Catalan trees (cf.\ \Cref{eq:catalan_mean_var}), and suggests that similar barriers to a full fluctuation theory may persist in the general butterfly setting, in contrast to the simple model.

We next consider condition (ii), convergence of $\E[Y_n]$. Let $Y_n'$ be an independent copy of $Y_n$. Using $\max(x,y) = (x+y+|x-y|)/2$ together with the RDE, we obtain
\[
\mathbb{E}[Y_{n+1}]
=
\frac{n}{n+1}\left(
\mathbb{E}[Y_n]
+
\frac{1}{2}\mathbb{E}|Y_n - Y_n'|
\right)
+
\frac{1}{n+1}\beta_n,
\]
where $\beta_n = \mathbb{E}[I_n]$. Rearranging,
\begin{equation}\label{eq: mean recursion}
\Delta_n := \mathbb{E}[Y_{n+1}] - \mathbb{E}[Y_n]
=
\frac{1}{n+1}
\left(
(\beta_n - \mathbb{E}[Y_n])
+
\frac{n}{2}\mathbb{E}|Y_n - Y_n'|
\right).
\end{equation}
Define the dispersion term
\[
c_n := \frac{n}{2}\mathbb{E}[|Y_n - Y_n'|]
= \frac{1}{2}\E\big[|\hs(\mathcal T_n^{\B}) - \hs(\widetilde{\mathcal T}_n^{\B})|\big]
\le \frac{1}{2}\sqrt{2\Var(\hs(\mathcal T_n^{\B}))},
\]
by Cauchy--Schwarz. Under \Cref{conj: var}, we have $c_n = O(1)$, and \eqref{eq: mean recursion} becomes
\begin{equation}\label{eq: EX_n incr}
\Delta_n
=
\frac{1}{n+1}\big(\beta_n + c_n - \mathbb{E}[Y_n]\big).
\end{equation}
Thus the evolution of $\E[Y_n]$ is governed by the cascade increment $\beta_n$ and the dispersion term $c_n$.

The sequence $\E[Y_n]$ is not monotone (e.g.\ $\E[Y_5] > \E[Y_6]$ and $\E[Y_7] > \E[Y_8]$ from the exact computations), so convergence does not follow from bounded monotonicity. A sufficient condition is that $(\Delta_n)$ is summable. We record this as a conjecture.

\begin{conjecture}\label{conj: EX_n cauchy}
There exist constants $C>0$ and $\rho>0$ such that
\[
|\Delta_n| \le \frac{C}{n^{1+\rho}}.
\]
\end{conjecture}

From \eqref{eq: EX_n incr}, such a bound would follow if
\[
\mathbb{E}[Y_n] = \beta_n + c_n + \mathcal{O}(n^{-\rho}),
\]
in which case $\beta_n + c_n \to \alpha$ whenever $\mathbb{E}[Y_n] \to \alpha$. 

Empirically, the sequence $\Delta_n$ exhibits a decaying trend consistent with a power-law envelope in $n$, but the estimates are highly sensitive to fluctuations at moderate values of $n$. (See \Cref{tab: summary statistics}.) In particular, while the magnitude of $\Delta_n$ decreases in a manner broadly compatible with a rate faster than $1/n$, the available data up to $n \le 25$ are insufficient to reliably identify a stable exponent.

These observations provide qualitative support for \Cref{conj: EX_n cauchy}, but do not determine a precise asymptotic decay rate. More extensive simulations would be required to resolve the scaling behavior of $\Delta_n$ with statistical confidence.

\section{Conclusions and future directions}
\label{sec: conclusions}

We characterize the HS number for the butterfly tree family, ranging from the structured simple model to the full recursive construction. For simple butterfly trees, we obtain a complete limit theory, including a functional CLT, by reducing the HS evolution to an additive functional of an explicit 8-state Markov chain. This yields one of the few examples of a nontrivial random tree model in which second-order fluctuations of the HS number are fully Gaussian, in contrast to classical models such as Catalan or Galton--Watson trees where periodic fluctuations  obstruct such limits.

For general butterfly trees, the structure becomes substantially more complex. The HS number cannot be reduced to a finite-state recursion, and we develop instead an $\mathcal{O}(N)$ profile-based algorithm that computes the statistic directly from the $(N-1)$-bit encoding. Our numerical experiments suggest a WLLN of the form $\hs(\mathcal{T}_n^{\B})/n \to \alpha \approx 0.4450$, placing the model between simple butterfly trees (limit $1/3$) and Catalan trees (limit $1/2$).

\medskip

The general butterfly recursion fits naturally into the framework of recursive tree processes (RTPs) and max-type recursive distributional equations (RDEs) introduced by Aldous and Bandyopadhyay~\cite{rde_aldous}. In this interpretation, the HS number is a deterministic functional of iid $\mathrm{Bern}(1/2)$ labels placed on an infinite binary tree, and the level-$n$ statistic depends only on the restriction of this labeled tree to its first $n$ levels.

From this viewpoint, the normalized quantity $\hs(\mathcal{T}_n^{\B})/n$ is a sequence of functionals of the underlying innovation field, and its observed concentration is consistent with an endogenous limit in the sense of RTP theory, meaning that any limiting value is determined by the driving labels alone.

A key structural feature of the model is that the recursion does not close on a fixed low-dimensional state space: the relevant edge profile evolves with $n$ and encodes the increment mechanism at each level. While this can be embedded into a single (countable) state space in principle, the resulting RDE lies outside the standard regimes where existing endogeny criteria apply, which typically rely on contraction or smoothing-transform structure on fixed or effectively finite-dimensional state spaces.

Establishing endogeny in this expanding-state setting, and more generally developing analytic tools for max-type RDEs beyond the contractive and finite-state frameworks, remains an open direction suggested by this work.

\bibliographystyle{alpha} 
\bibliography{references}

\appendix

\section{Algorithms}
\label{sec: algorithms}

This appendix records explicit algorithms for computing the HS number of butterfly trees directly from their bitstring encoding, implementing the profile update rules of \Cref{prop: profile update} without constructing the underlying tree.

Algorithm~\ref{alg:hs_profile_corrected} is the top-level recursion. Given an $(N-1)$-bitstring $\mathbf{x}$, it splits the input into left and right substrings corresponding to the two subtrees, recursively computes their HS profiles, and combines them via the appropriate merging rule, with the final bit determining whether the merge uses the $\oplus$ orientation or swaps the roles of the left and right profiles.

The core computation is Algorithm~\ref{alg:merge_final}, which determines both the updated HS number and the output edge profiles. The threshold $$k^\star  = \min\{k \ge h_c: (a_k,b_k) \ne (1,1)\}$$ identifies the first level at which the parent profile fails to sustain a cascading increment for the gluing edge (top-right if using $\oplus$, top-left if $\ominus$), and governs both whether the HS number increases and how the output profiles are assembled.

Algorithms~\ref{alg:build_right_final} and~\ref{alg:build_left_final} construct the right and left edge profiles of the merged tree, implementing the case analysis of \Cref{prop: profile update} across the regimes $h_c > h_p$, $k^\star = h_p + 1$, and $k^\star = h_p$. Algorithm~\ref{alg:utils_final} defines the profile data structure and its accessors, representing each profile by a multiplicity bit together with binary arrays $(A, B)$, for $A = (a_0,a_1,\ldots,a_{\hs})$ and $B = (b_0,b_1,\ldots,b_{\hs})$, recording presence and escape information at each level.

Together these routines give an $\mathcal{O}(N)$ procedure for evaluating $\hs(\mathcal T^{\B}(\mathbf{x}))$ from the input bitstring. This was implemented in MATLAB and used to verify all exact computations in \Cref{sec: exact} and to perform all Monte Carlo simulations in \Cref{sec: limiting dist}.

For simple butterfly trees the recursion simplifies substantially. Algorithm~\ref{alg:HS number_run_length} gives a non-recursive implementation based on the run-length structure of \Cref{prop: runs}, computing the HS number in a single pass through the input bitstring using only local comparisons.

% ==========================================================
\begin{algorithm}[t]
\caption{\textsc{HS-Profile}$(\mathbf{x})$}
\label{alg:hs_profile_corrected}
\begin{algorithmic}[1]

\Require Bitstring $\mathbf{x} \in \{0,1\}^{N-1}$ with $N=2^n$
\Ensure $(h_s, L, R)$

\If{$\mathbf{x} = \emptyset$}
    \State \Return $(0, \textsc{makeProfile}(0,1,0), \textsc{makeProfile}(0,1,0))$
\EndIf

\State $m \gets N/2$

\State $\mathbf{y} \gets (x_1,\dots,x_{m-1})$ \Comment{$|\mathbf{y}| = m-1$}
\State $\mathbf{z} \gets (x_m,\dots,x_{N-2})$ \Comment{$|\mathbf{z}| = m-1$}
\State $t \gets x_{N-1}$

\State $(h_p, L_p, R_p) \gets \textsc{HS-Profile}(\mathbf{y})$
\State $(h_c, L_c, R_c) \gets \textsc{HS-Profile}(\mathbf{z})$

\If{$t = 0$}
    \State $(h_s, L, R) \gets \textsc{Merge}_{\oplus}(h_p, L_p, R_p, h_c, L_c, R_c)$
\Else
    \State $(h_s, R, L) \gets \textsc{Merge}_{\oplus}(h_p, R_p, L_p, h_c, R_c, L_c)$
\EndIf

\State \Return $(h_s, L, R)$

\end{algorithmic}
\end{algorithm}

% ==========================================================
\begin{algorithm}[t]
\caption{\textsc{Merge$_{\oplus}$}}
\label{alg:merge_final}
\begin{algorithmic}[1]

\Require $(h_p, L_p, R_p, h_c, L_c, R_c)$
\Ensure $(h_s, L, R)$

% --------------------------
\State $k^\star \gets h_c$

\While{$k^\star \le h_p \;\wedge\; \textsc{getA}(R_p,k^\star)=1 \;\wedge\; \textsc{getB}(R_p,k^\star)=1$}
    \State $k^\star \gets k^\star + 1$
\EndWhile

% --------------------------
\If{$h_c > h_p$}
    \State $h_s \gets h_c$
\ElsIf{$k^\star = h_p + 1$}
    \State $h_s \gets h_p + 1$
\Else
    \State $h_s \gets h_p$
\EndIf

% --------------------------
\State $R \gets \textsc{BuildRight}(h_p, h_c, R_p, R_c, k^\star, h_s)$
\State $L \gets \textsc{BuildLeft}(h_p, h_c, L_p, L_c, R_p, R, k^\star, h_s)$

\State \Return $(h_s, L, R)$

\end{algorithmic}
\end{algorithm}

% ==========================================================
\begin{algorithm}[t]
\caption{\textsc{BuildRight}}
\label{alg:build_right_final}
\begin{algorithmic}[1]

\Require $(h_p, h_c, R_p, R_c, k^\star, h_s)$
\Ensure $R$

\State Initialize $\mathbf{A}_R, \mathbf{B}_R \gets \mathbf{0}$

\If{$h_c > h_p$}

    \For{$k = 0$ to $h_c$}
        \State $A_R(k) \gets \textsc{getA}(R_c,k)$
        \State $B_R(k) \gets \textsc{getB}(R_c,k)$
    \EndFor

    \State $m_R \gets 1$

\Else

    \For{$k = 0$ to $h_c$}
        \State $A_R(k) \gets \textsc{getA}(R_c,k)$
        \State $B_R(k) \gets \textsc{getB}(R_c,k)$
    \EndFor

    \For{$k = h_c+1$ to $\min(k^\star, h_s)$}
        \State $A_R(k) \gets 1$
        \State $B_R(k) \gets 0$
    \EndFor

    \For{$k = k^\star+1$ to $\min(h_p, h_s)$}
        \State $A_R(k) \gets \textsc{getA}(R_p,k)$
        \State $B_R(k) \gets \textsc{getB}(R_p,k)$
    \EndFor

    \If{$k^\star = h_p$}
        \State $m_R \gets 1$
    \Else
        \State $m_R \gets R_p.m$
    \EndIf

\EndIf

\State \Return $\textsc{makeProfile}(m_R, \mathbf{A}_R, \mathbf{B}_R)$

\end{algorithmic}
\end{algorithm}

% ==========================================================
\begin{algorithm}[t]
\caption{\textsc{BuildLeft}}
\label{alg:build_left_final}
\begin{algorithmic}[1]

\Require $(h_p, h_c, L_p, L_c, R_p, R, k^\star, h_s)$
\Ensure $L$

\State Initialize $\mathbf{A}_L, \mathbf{B}_L \gets \mathbf{0}$

\For{$k = 0$ to $\min(h_p-1, h_s)$}
    \State $A_L(k) \gets \textsc{getA}(L_p,k)$
    \State $B_L(k) \gets \textsc{getB}(L_p,k)$
\EndFor

\If{$h_c > h_p$}

    \State $A_L(h_c) \gets 1$, \quad $B_L(h_c) \gets 1$
    \State $m_L \gets 0$

    \If{$L_p.m = 1$}
        \State $A_L(h_p) \gets \textsc{getA}(L_p,h_p)$
        \State $B_L(h_p) \gets \textsc{getB}(L_p,h_p)$
    \EndIf

\ElsIf{$k^\star = h_p + 1$}

    \State $A_L(h_p+1) \gets 1$
    \State $B_L(h_p+1) \gets R.m$
    \State $m_L \gets 0$

    \If{$L_p.m = 1$}
        \State $A_L(h_p) \gets \textsc{getA}(L_p,h_p)$
        \State $B_L(h_p) \gets \textsc{getB}(L_p,h_p)$
    \EndIf

\ElsIf{$k^\star = h_p$}

    \State $A_L(h_p) \gets 1$
    \State $B_L(h_p) \gets 1$
    \State $m_L \gets 0$

\Else

    \State $A_L(h_p) \gets \textsc{getA}(L_p,h_p)$
    \State $B_L(h_p) \gets \textsc{getB}(L_p,h_p)$
    \State $m_L \gets L_p.m$

\EndIf

\State \Return $\textsc{makeProfile}(m_L, \mathbf{A}_L, \mathbf{B}_L)$

\end{algorithmic}
\end{algorithm}

% ==========================================================
\begin{algorithm}[t]
\caption{\textsc{makeProfile} and Accessors}
\label{alg:utils_final}
\begin{algorithmic}[1]

\Function{makeProfile}{$m,\mathbf{A},\mathbf{B}$}
    \State $P.m \gets m$
    \State $P.A \gets \mathbf{A}$
    \State $P.B \gets \mathbf{B}$
    \State \Return $P$
\EndFunction

\Function{getA}{$P,k$}
    \If{$k+1 \le |P.A|$}
        \State \Return $P.A[k+1]$
        \Comment{$A[k+1] = a_k$}
    \Else
        \State \Return $0$
    \EndIf
\EndFunction

\Function{getB}{$P,k$}
    \If{$k+1 \le |P.B|$}
        \State \Return $P.B[k+1]$
        \Comment{$B[k+1] = b_k$}
    \Else
        \State \Return $0$
    \EndIf
\EndFunction

\end{algorithmic}
\end{algorithm}

% We next show a full non-recursive implementation of computing the HS number for a simple butterfly tree with $N = 2^n$ nodes built using an input $n$-bitstring, as seen in \Cref{prop: runs}:

\begin{algorithm}[t]
\caption{\textsc{HSRunLength}$(\mathbf{x})$}
\label{alg:HS number_run_length}
\begin{algorithmic}[1]
\Require Bitstring $\mathbf{x} = (x_1,\dots,x_n) \in \{0,1\}^n$
\Ensure $\hs(\mathcal T^{\B}(\mathbf{x}))$

\State $count \gets 0$
\State $run \gets 0$

\For{$j = 1$ to $n-1$}
    \If{$x_{j+1} \neq x_j$}
        \State $run \gets run + 1$
    \ElsIf{$run > 0$}
        \State $count \gets count + \left\lceil \frac{run}{2} \right\rceil$
        \State $run \gets 0$
    \EndIf
\EndFor

\If{$run > 0$}
    \State $count \gets count + \left\lceil \frac{run}{2} \right\rceil$
\EndIf

\State \Return $count$

\end{algorithmic}
\end{algorithm}

\section{Exact computations}
\label{sec: exact}

In this section, we outline exact statistics for uniformly random butterfly trees up to $n = 8$.

% \paragraph{Exact distributions: $n \le 4$}
% \label{sec: exact dist}
    
    % We can work through each case up to $n = 4$.
    
    \paragraph{\fbox{$n\le 1$}} If {$n \le 1$}, then $\mathcal T_0^{\B}$ is a single vertex or a path so $\hs(\mathcal T_n^{\B}) = 0$ deterministically for these random butterfly models. The edge profiles $(L,R)$ for $\mathcal T^{\B}(0) = \mathcal T(12)$ is $L = [1,1]^\top$ and $R = [1,0]^\top$, while $\mathcal T^{\B}(1) = \mathcal T^{\B}(0)^R$ switches the left and right edge profiles. Note then $\beta_0 = 0$.
    
\paragraph{\fbox{$n=2$}} The distribution of $\hs(\mathcal T_n^{\B})$ is determined by the marginal distribution of the full HS number profiles. As is seen in \Cref{fig: bst_4_nodes}, then $\hs(\mathcal T_n^{\B}) \in \{0,1\}$, so $\hs(\mathcal T_2^{\B}) \sim \Bern(\eta_2)$ with 
    $$\eta_2 = \frac{\#\{\mathcal T_n^{\B}: \hs(\mathcal T_n^{\B}) = 1\}}8 = \frac12.$$ 
    In particular, $$\E[\hs(\mathcal T_2^{\B})] = \frac12, \qquad \Var(\hs(\mathcal T_2^{\B})) = \frac14.$$
    Also, now $$\beta_1 = \E[\hs(\mathcal T_2^{\B})] - \E[\max(\hs(\mathcal T_1^{\B}),\hs(\widetilde{\mathcal T}_1^{\B}))] = \frac12.$$
    
\paragraph{\fbox{$n=3$}} There are now $|\B_3| = 2^{2^3-1} = 2^7 = 128$ butterfly trees. These can be realized as there are 8 options for $n=2$ level butterfly trees for each parent and child combination, along with 2 options for each $\oplus$ or $\ominus$ merging. By \Cref{prop: butterfly support}, we have again $|\hs(\mathcal T_n^{\B})| \le \floor{n/2} = 1$, so similarly $\hs(\mathcal T_3^{\B}) \sim \Bern(\eta_3)$. To determine $\eta_3$, we can determine how HS profiles for level 2 trees can be combined to return the HS number of 0. Necessarily both input trees must have HS number 0, and so the merged tree will again have HS number of 0 only if the gluing edge parent profile has $(a_0,b_0) \ne (1,1)$. 

Using \Cref{fig: bst_4_nodes}, if the parent is $\mathcal T^{\B}(000)$, then any HS number 0 child tree option preserves the HS number; this contributes 4 trees. By symmetry we also get 4 more butterfly trees with HS number 0 by using the parent $\mathcal T^{\B}(111) = \mathcal T^{\B}(000)^R$. The other 2 options for parent trees with HS number 0, $\mathcal T^{\B}(010)$ and $\mathcal T^{\B}(101)$, have matching left and right profiles $(a_0,b_0) = (1,1)$, so necessarily joining any subtree will increment the HS number at the root to have HS number 1. So only 8 butterfly trees with 128 nodes have HS number 0, which means 
    $$\eta_3 = \frac{120}{128} = \frac{15}{16}.$$ 
   It follows now 
   $$\E[\hs(\mathcal T_3^{\B})] = \frac{15}{16}, \qquad \Var(\hs(\mathcal T_3^{\B})) = \frac{15}{256},$$
while also noting $\max(\hs(\mathcal T_2^{\B}),\hs(\widetilde{\mathcal T}_2^{\B}))\sim \Bern(\eta)$ for $\eta = 1-1/2^2 = 3/4$, so that
$$
\beta_2 = \frac{15}{16} - \frac34 = \frac{3}{16} = 0.1875
$$

\begin{remark}[Butterfly trees with HS number 0]\label{rmk: HS number=0}
We analyze butterfly trees with $\hs = 0$ through their edge profiles $(L,R)$, which are determined by the \emph{escape profile} $(b_0^{(L)}, b_0^{(R)}) \in \{(1,0),(1,1),(0,1)\}$. 

For $n=1$, only the profiles $(1,0)$ and $(0,1)$ occur. At $n=2$, among the four trees with $\hs=0$, exactly one has profile $(1,0)$, one has $(0,1)$, and the remaining two have $(1,1)$. At $n=3$, the eight trees with $\hs=0$ are obtained by merging level-$2$ trees with $b_0^{(G)}=0$. In this case, a profile $(1,0)$ (resp.\ $(0,1)$) is preserved only when both inputs have that same profile, while all other merges produce $(1,1)$. Thus exactly one tree has profile $(1,0)$, one has $(0,1)$, and the remaining six have $(1,1)$.

This structure persists at all levels. By \Cref{prop: profile update}, the output profile is a deterministic function of the input profiles. For trees with $\hs=0$, only the profiles $(1,0)$, $(0,1)$, and $(1,1)$ can occur, and the only merges that preserve $\hs=0$ are those with $b_0^{(G)}=0$. Among these, the profiles $(1,0)$ and $(0,1)$ arise uniquely from merging identical inputs of the same type (corresponding to the identity and reverse permutations), while all other valid merges produce $(1,1)$. It follows inductively that at level $n$ there is exactly one tree with profile $(1,0)$, one with $(0,1)$, and all remaining trees with $(1,1)$, and hence the total number of trees with $\hs=0$ doubles at each step.

We summarize this as follows:
\begin{proposition}\label{prop: zero}
For butterfly trees $\mathcal T_n^{\B}$ with $N = 2^n$ nodes,
\[
\#\{\mathcal T_n^{\B} : \hs(\mathcal T_n^{\B}) = 0\} = 2^n.
\]
\end{proposition}
In particular, for a uniformly random butterfly tree,
\[
\P\big(\hs(\mathcal T_n^{\B}) = 0\big) = 2^{\,n + 1 - 2^n}.
\]
\end{remark}

Returning to the HS number profiles for $n = 3$, we have 20 distinct HS number edge profiles, as shown in \Cref{tab:HS number_profiles_n3}. This compares to 7 distinct profiles for $n = 2$ and 2 for $n = 1$. We additional note the breakdown for the HS number profiles for HS number 0 matches the profile counts outlined in \Cref{rmk: HS number=0}.

\paragraph{\fbox{$n=4$}} Returning to the exact distribution of uniform butterfly trees $\hs(\mathcal T_n^{\B})$ for {$n = 4$}, we note there are now $|\B_3| = 2^{2^4-1} = 2^{15} = 32{,}768$ such butterfly trees. By \Cref{prop: butterfly support}, then $\hs(\mathcal T_n^{\B}) \in \{0,1,2\}$ since $\floor{n/2} = 2$. By \Cref{prop: zero}, we know $2^4 = 16$ of these have HS number 0, so it suffices to determine how many have HS number 2.

Now to have an HS number of 2, then necessarily the tree was formed from two input $n=3$ level butterfly trees where the parent has HS number 1, where the parent gluing edge profile had $b_1^{(G)} = 1$ and child has HS number 1, or if also $b_0^{(G)} = 1$ then any child with HS number 0 will also lead to a cascading increment in HS number to 2 at the root. In particular, if merging two trees with $\oplus$, then if the parent tree has top-right edge profile $\begin{bmatrix}
    * & 1\\ * & 1
\end{bmatrix}$, then any child tree with HS number 1 will yield an increment at the root of HS number 2. If the parent tree has top-right edge profile $\begin{bmatrix}
    1 & 1\\1& 1
\end{bmatrix}$, then also any child with HS number 0 will also yield a merged tree with HS number 2. Using \Cref{tab:HS number_profiles_n3}, there are 12 trees with right profile $\begin{bmatrix}
    0&1\\0&1
\end{bmatrix}$, 20 trees with right profile $\begin{bmatrix}
    1&1\\0&1
\end{bmatrix}$, and 8 trees with right profile $\begin{bmatrix}
    1&1\\1&1
\end{bmatrix}$. In particular, there are then 40 such trees that can be chosen as the parent tree that can merge using $\oplus$ with any of the 120 trees with HS number 1 to yield a HS number 2 tree, while 8 candidate parent trees can also be combined with any of the 8 HS number 0 trees to yield a HS number 2 merged tree. By symmetry, we have the same number of trees with HS number 2 combined using $\ominus$, that now yields
$$
\#\{\hs = 2\}:=\#\{\mathcal T_4^{\B}: \hs(\mathcal T_4^{\B}) = 2\} = 2 \cdot (40 \cdot 120 + 8 \cdot 8) = 9{,}728.
$$
It follows then $$\#\{\hs = 1\} = 32{,}768 - 9{,}728 - 16 = 23{,}024.$$
Hence, for a uniform butterfly tree $\mathcal T_4^{\B}$, we have 
\[
\E[\hs(\mathcal T_4^{\B})] = \frac{0\cdot 16+1\cdot 23{,}024 + 2\cdot 9{,}728}{32{,}768} = \frac{2{,}655}{2{,}048} \approx 1.2964, \quad \Var(\hs(\mathcal T_4^{\B})) = \frac{878{,}783}{4{,}194{,}304} \approx 0.20952.
\]
Also, now $\max(\hs(\mathcal T_3^{\B}),\hs(\widetilde{\mathcal T}_3^{\B})) \sim \Bern(\eta)$ for $\eta = 1 - 1/16^2 = 1 - 1/256$. Hence,
$$
\beta_3 = \frac{2{,}655}{2{,}048} - 1 + \frac{1}{256} = \frac{615}{2{,}048} \approx 0.30029.
$$

For $n \le 4$, these values are small enough that we can explicitly compute the HS number for every butterfly tree. We initially verified these values computationally by computing the HS numbers for all butterfly trees, which agreed with the exact values derived above. However, for $n=5$, brute-force enumeration becomes infeasible due to the total number of butterfly trees:
\[
|\B_5| = 2^{2^5 - 1} = 2^{31} = 2{,}147{,}483{,}648.
\]

To overcome this, we instead implement a recursive procedure from \Cref{prop: profile update} that computes the HS number directly from the defining bitstring of length $N-1$, where $N = 2^n$ (see \Cref{alg:hs_profile_corrected}). This formulation isolates the effect of the merge operation, reducing the computation to tracking how profiles evolve along the gluing edge and at the root. As a result, we work not with individual trees, but with equivalence classes of trees sharing the same profile.

Let $\mathcal P_n$ denote the set of distinct profiles at level $n$. For each profile, we store a single representative $N-1$ bitstring together with its multiplicity. The recursion then proceeds by combining pairs of profiles, yielding at most $|\mathcal P_n|^2 \cdot 2$ merge operations at each level; we then again preserve a single representative bitstring per unique profile at the next level. This allows us to compute the exact distribution of HS numbers by aggregating counts over profiles rather than enumerating all trees.

The number of distinct profiles grows much more slowly than the total number of trees, and is given in Table~\ref{tab: profile count}.

\begin{table}[h]
    \centering
    \begin{tabular}{c|ccccccccc}
        $n$ & 0 & 1 & 2 & 3 & 4 & 5 & 6 & 7 & 8\\
        \hline
        $|\mathcal P_n|$ & 1 & 2 & 7 & 20 & 54 & 143 & 376 & 986 & 2{,}583
    \end{tabular}
    \caption{Number of distinct profiles $(h,L,R)$ for butterfly trees with $N=2^n$ nodes.}
    \label{tab: profile count}
\end{table}
Using this approach, we computed the exact HS number distributions up to $n=8$. 

\noindent \fbox{$n=5$} We compute the full profile counts, noting $\hs(\mathcal T_5^{\B}) \le \floor{5/2} = 2$:
\begin{align*}
    \#\{\hs = 0\} &= 32\\
    \#\{\hs = 1\} &= 377{,}060{,}320\\
    \#\{\hs = 2\} &= 1{,}770{,}423{,}296
\end{align*}
so that
\[
\E[\hs(\mathcal T_5^{\B})] \approx 1.824418, \qquad \Var(\hs(\mathcal T_5^{\B})) \approx 0.144753, \qquad \beta_4 \approx 0.3188025802.
\]

\noindent \fbox{$n=6,7,8$} For $n \le 5$, standard double-precision arithmetic suffices. However, beginning at $n=6$, the counts exceed the largest exactly representable integer in IEEE double precision ($2^{53} \approx 9.0 \times 10^{15}$), and arithmetic overflow becomes unavoidable. For example, already at $n=6$ we encounter counts on the order of $10^{18}$, such as
\[
\#\{\hs=3\} = 9{,}223{,}372{,}036{,}854{,}775{,}808 \approx 9.22 \cdot 10^{18}.
\]
To obtain exact results at this level, we therefore switched to symbolic integer arithmetic. At $n=7$, the counts grow dramatically still, with
\[
\#\{\hs=3\} =118{,}597{,}725{,}377{,}303{,}585{,}569{,}410{,}371{,}414{,}178{,}922{,}496 \approx 1.19 \cdot 10^{35},
\]
while for $n = 8$, we see \[
\#\{\hs=3\} \approx 5.14 \cdot 10^{46}, \qquad
\#\{\hs=2\} \approx 1.17 \cdot 10^{45}.
\]
From these exact counts, we compute
\[
\E[\hs(\mathcal T_6^{\B})] \approx 2.1662, \quad
\E[\hs(\mathcal T_7^{\B})] \approx 2.6971, \quad
\E[\hs(\mathcal T_8^{\B})] \approx 3.0715,
\]
with corresponding variances remaining bounded:
\[
\Var(\hs(\mathcal T_6^{\B})) \approx 0.1495, \quad
\Var(\hs(\mathcal T_7^{\B})) \approx 0.2112, \quad
\Var(\hs(\mathcal T_8^{\B})) \approx 0.1068.
\]
The corresponding $\beta_n$ values can be computed up to $\beta_7 = \E[\hs(\mathcal T_8^{\B})] - \E[\max(\hs(\mathcal T_7^{\B}),\hs(\widetilde{\mathcal T}_7^{\B}))]$:
\[
\beta_5 \approx 0.1970656394, \qquad \beta_6 \approx 0.3831492326, \qquad \beta_7 \approx 0.1632354406.
\]

Despite the reduction in computation to equivalence class representatives, the computational cost still grows rapidly. The number of merge operations scales as $|\mathcal P_n|^2$, and the runtime increases dramatically:
\[
n=6:\; \text{seconds}, \qquad
n=7:\; \text{minutes}, \qquad
n=8:\; \text{tens of minutes}.
\]
In our implementation, the runtime increased from approximately $14$ seconds at $n=6$, to $215$ seconds at $n=7$, and to over $2{,}200$ seconds (roughly $38$ minutes) at $n=8$. Extrapolating this growth, computing the exact distribution for $n=9$ would require several hours (approximately $4-8$ hours) and substantial memory, making it impractical in our setting. Further extrapolating, $n = 10$ would take approximately $1-4$ days, if at all. Hence, $n=8$ represents a natural stopping point for exact enumeration.

\medskip

For larger values of $n$, we therefore switch to Monte Carlo sampling, using the exact results for $n \le 8$ to validate the implementation and anchor the observed trends. These experiments are described in \Cref{sec: limiting dist} and \Cref{sec: experiment data}. 

\section{Exact enumeration and numerical experiments}
\label{sec: experiment data}

This appendix collects the exact enumerations and Monte Carlo experiments supporting \Cref{sec: limiting dist}.

Table~\ref{tab:HS number_profiles_n3} gives the complete distribution of edge profiles at level $n=3$, classifying each butterfly tree by its HS number and left and right edge profiles in the form $\bigl(m, \begin{bmatrix}A\\B\end{bmatrix}\bigr)$. Even at this small depth the profile distribution is already nontrivial, and the table serves as a concrete reference for the profile recursion developed in \Cref{sec: block}, that continues to grow at each subsequent $n$, with $2{,}583$ distinct profiles at $n = 8$ (see \Cref{tab: profile count}).
\medskip

Table~\ref{tab: summary statistics} reports summary statistics for $\hs(\mathcal T_n^{\B})$ through $n=25$. For $n \le 8$, exact values are used: these are obtained by direct enumeration of profile equivalence classes $\mathcal P_n$ as described in \Cref{sec: exact}. For $n \ge 9$, values are estimated by Monte Carlo sampling using a MATLAB implementation of \Cref{alg:hs_profile_corrected}, drawing bitstrings $\mathbf{x} \in \{0,1\}^{N-1}$ with iid $\Bern(1/2)$ entries and evaluating $\hs(\mathcal T^{\B}(\mathbf{x}))$ via the profile recursion of Algorithms~\ref{alg:hs_profile_corrected}--\ref{alg:utils_final}.

The table highlights several structural features of the model. First, the normalized quantity
\[
\mathbb{E}[Y_n] = \frac1n\mathbb{E}[\hs(\mathcal T_n^{\B})]
\]
remains close to a stable limiting regime, supporting linear growth of the mean HS number. Next, the variance remains uniformly bounded across all observed levels, consistent with strong concentration. This is also supported by the small range of 2-3 distinct HS numbers sampled across each level $n$. Third, the drift term
\[
\Delta_n = \mathbb{E}[Y_{n+1}] - \mathbb{E}[Y_n]
\]
is small and exhibits rapid decay toward zero, consistent with convergence of the normalized mean, although the persistent $O(1)$ fluctuations obviate a clean linear fit.

Simulations are reported through $n=25$, where each sample corresponds to a tree with $2^{25} \approx 3.355 \cdot 10^7$ nodes, which took several hours to gather the 100 data points at this final level. Extending beyond this range would require substantially reduced sample sizes or additional computational resources, and is left for future investigation.
\begin{table}[t]
\centering

\begin{minipage}{0.48\textwidth}
\centering
\begin{tabular}{c|cc|cc|c}
 & \multicolumn{2}{c|}{Left profile} & \multicolumn{2}{c|}{Right profile} &  \\ 
HS number & $m$ & $\begin{bmatrix}A\\B\end{bmatrix}$ 
   & $m$ & $\begin{bmatrix}A\\B\end{bmatrix}$ & \# \\ \hline

0 & 0 & $\begin{bmatrix}1\\1\end{bmatrix}$ & 1 & $\begin{bmatrix}1\\0\end{bmatrix}$ & 1 \\
0 & 1 & $\begin{bmatrix}1\\0\end{bmatrix}$ & 0 & $\begin{bmatrix}1\\1\end{bmatrix}$ & 1 \\
0 & 0 & $\begin{bmatrix}1\\1\end{bmatrix}$ & 1 & $\begin{bmatrix}1\\1\end{bmatrix}$ & 3 \\
0 & 1 & $\begin{bmatrix}1\\1\end{bmatrix}$ & 0 & $\begin{bmatrix}1\\1\end{bmatrix}$ & 3 \\[6pt]

1 & 0 & $\begin{bmatrix}1&1\\1&1\end{bmatrix}$ & 1 & $\begin{bmatrix}1&1\\1&0\end{bmatrix}$ & 2 \\
1 & 1 & $\begin{bmatrix}1&1\\1&0\end{bmatrix}$ & 0 & $\begin{bmatrix}1&1\\1&1\end{bmatrix}$ & 2 \\
1 & 0 & $\begin{bmatrix}0&1\\0&1\end{bmatrix}$ & 1 & $\begin{bmatrix}1&1\\1&0\end{bmatrix}$ & 5 \\
1 & 1 & $\begin{bmatrix}1&1\\1&0\end{bmatrix}$ & 0 & $\begin{bmatrix}0&1\\0&1\end{bmatrix}$ & 5 \\
1 & 0 & $\begin{bmatrix}1&1\\0&0\end{bmatrix}$ & 0 & $\begin{bmatrix}1&1\\0&0\end{bmatrix}$ & 6 \\
1 & 0 & $\begin{bmatrix}1&1\\1&1\end{bmatrix}$ & 1 & $\begin{bmatrix}1&1\\0&0\end{bmatrix}$ & 6 \\

\end{tabular}
\end{minipage}
\hfill
\begin{minipage}{0.48\textwidth}
\centering
\begin{tabular}{c|cc|cc|c}
 & \multicolumn{2}{c|}{Left profile} & \multicolumn{2}{c|}{Right profile} &  \\ 
HS number & $m$ & $\begin{bmatrix}A\\B\end{bmatrix}$ 
   & $m$ & $\begin{bmatrix}A\\B\end{bmatrix}$ & \# \\ \hline

1 & 1 & $\begin{bmatrix}1&1\\0&0\end{bmatrix}$ & 0 & $\begin{bmatrix}1&1\\1&1\end{bmatrix}$ & 6 \\
1 & 0 & $\begin{bmatrix}0&1\\0&1\end{bmatrix}$ & 1 & $\begin{bmatrix}1&1\\0&0\end{bmatrix}$ & 7 \\
1 & 0 & $\begin{bmatrix}1&1\\0&1\end{bmatrix}$ & 1 & $\begin{bmatrix}1&1\\1&0\end{bmatrix}$ & 7 \\
1 & 1 & $\begin{bmatrix}1&1\\0&0\end{bmatrix}$ & 0 & $\begin{bmatrix}0&1\\0&1\end{bmatrix}$ & 7 \\
1 & 1 & $\begin{bmatrix}1&1\\1&0\end{bmatrix}$ & 0 & $\begin{bmatrix}1&1\\0&1\end{bmatrix}$ & 7 \\
1 & 0 & $\begin{bmatrix}1&1\\0&0\end{bmatrix}$ & 0 & $\begin{bmatrix}1&1\\1&0\end{bmatrix}$ & 11 \\
1 & 0 & $\begin{bmatrix}1&1\\1&0\end{bmatrix}$ & 0 & $\begin{bmatrix}1&1\\0&0\end{bmatrix}$ & 11 \\
1 & 0 & $\begin{bmatrix}1&1\\1&0\end{bmatrix}$ & 0 & $\begin{bmatrix}1&1\\1&0\end{bmatrix}$ & 12 \\
1 & 0 & $\begin{bmatrix}1&1\\0&1\end{bmatrix}$ & 1 & $\begin{bmatrix}1&1\\0&0\end{bmatrix}$ & 13 \\
1 & 1 & $\begin{bmatrix}1&1\\0&0\end{bmatrix}$ & 0 & $\begin{bmatrix}1&1\\0&1\end{bmatrix}$ & 13 \\

\end{tabular}
\end{minipage}

\caption{Distribution of HS number profiles for $n=3$. Each profile is recorded as $\left(m, \begin{bmatrix}
    A\\B
\end{bmatrix}\right)$, where $m$ is the multiplicity and $\begin{bmatrix}A\\B\end{bmatrix} = \begin{bmatrix}
    a_0 & a_1 & \cdots & a_{\hs}\\
    b_0 & b_1 & \cdots & b_{\hs}
\end{bmatrix}$ encodes the edge profile ($128$ butterfly trees).}
\label{tab:HS number_profiles_n3}
\end{table}

\begin{table}
\centering
\begin{tabular}{c|c|cc|cc|cc}
n & samples & $\mathbb{E}[\hs_n]$ & Var & min & max & $\mathbb{E}[Y_n]$ & $\Delta_n$ \\
\hline
1 & -- & 0.000000 & 0.000000 & 0 & 0 & 0.000000 & 0.250000 \\
2 & -- & 0.500000 & 0.250000 & 0 & 1 & 0.250000 & 0.062500 \\
3 & -- & 0.937500 & 0.058594 & 0 & 1 & 0.312500 & 0.011597 \\
4 & -- & 1.296387 & 0.209518 & 0 & 2 & 0.324097 & 0.040787 \\
5 & -- & 1.824418 & 0.144753 & 0 & 2 & 0.364884 & -0.003844 \\
6 & -- & 2.166236 & 0.149546 & 0 & 3 & 0.361039 & 0.024254 \\
7 & -- & 2.697052 & 0.211176 & 0 & 3 & 0.385293 & -0.001668 \\
8 & -- & 3.069000 & 0.103291 & 2 & 4 & 0.383625 & 0.010875 \\
9 & 2000 & 3.550500 & 0.247574 & 3 & 4 & 0.394500 & 0.005000 \\
10 & 2000 & 3.995000 & 0.088019 & 3 & 5 & 0.399500 & 0.001091 \\
11 & 2000 & 4.406500 & 0.242379 & 3 & 5 & 0.400591 & 0.008576 \\
12 & 2000 & 4.910000 & 0.104952 & 4 & 6 & 0.409167 & -0.001244 \\
13 & 1000 & 5.303000 & 0.219410 & 4 & 6 & 0.407923 & 0.007577 \\
14 & 1000 & 5.817000 & 0.167679 & 5 & 7 & 0.415500 & -0.003500 \\
15 & 1000 & 6.180000 & 0.175776 & 5 & 7 & 0.412000 & 0.008750 \\
16 & 1000 & 6.732000 & 0.202378 & 6 & 8 & 0.420750 & -0.003221 \\
17 & 500 & 7.098000 & 0.132661 & 6 & 8 & 0.417529 & 0.005137 \\
18 & 500 & 7.608000 & 0.238814 & 7 & 8 & 0.422667 & -0.001193 \\
19 & 500 & 8.008000 & 0.104144 & 7 & 9 & 0.421474 & -0.000724 \\
20 & 200 & 8.415000 & 0.243995 & 8 & 9 & 0.420750 & 0.005202 \\
21 & 200 & 8.945000 & 0.132638 & 8 & 10 & 0.425952 & -0.002316 \\
22 & 200 & 9.320000 & 0.218693 & 9 & 10 & 0.423636 & 0.003755 \\
23 & 100 & 9.830000 & 0.142525 & 9 & 10 & 0.427391 & -0.002391 \\
24 & 100 & 10.200000 & 0.161616 & 10 & 11 & 0.425000 & 0.003400 \\
25 & 100 & 10.710000 & 0.207980 & 10 & 11 & 0.428400 & -- \\
\end{tabular}

\caption{Summary statistics for HS numbers of uniformly random butterfly trees $\mathcal T_n^{\B}$ for $n \le 25$. Exact values are used for $n \le 8$, and Monte Carlo estimates thereafter.}
\label{tab: summary statistics}
\end{table}

\end{document}